\numberwithin{equation}{section}
\newtheorem{thm}{Theorem}[section]
\newtheorem{prop}[thm]{Proposition}
\newtheorem{cor}[thm]{Corollary}
\newtheorem{lem}[thm]{Lemma}
\newtheorem{rem}[thm]{Remark}
\DeclareMathOperator{\p}{p}
\DeclareMathOperator{\Hom}{Hom}
\def\cal{\mathcal}
\def \Z{\Bbb Z}
\def \C{\Bbb C}
\def \R{\Bbb R}
\def \Or{{\bf O}}
\def \tr{{\rm tr}}
\def \qdim{{\rm qdim}}
\def \d{{\rm d}}
\def \Hom{{\rm Hom}}
\def \KW {{\rm KW}}
\def \<{\langle}
\def \>{\rangle}
\def \l{\lambda }
\def \g{\gamma}
\def\g{\mathfrak g}
\def\p{\mathfrak p}
\def\CC{{\mathcal{C}}}
\def\FF{{\mathcal{F}}}
\def\DD{{\mathcal{D}}}
\def\1{{\bf 1}}
\def\id{{\rm id}}
\def\FPdim{{\rm FPdim}}
\newcommand{\BR}{\mathbb{R}}
\def \1{{\bf 1}}
\begin{document}
\title{\bf Coset constructions and Kac-Wakimoto Hypothesis}
\author{Chongying Dong}
\address{ Department of Mathematics, University of California, Santa Cruz, CA 95064}
 \email{dong@ucsc.edu}
 \thanks{The first  author is supported by the Simons foundation  634104}
\author{Li Ren}
\address{School of Mathematics, Sichuan University,
 Chengdu 610064 China }
 \email{renl@scu.edu.cn}
 \thanks{The second author is supported by China NSF grant 11301356}
 \author{Feng Xu}
 \address{ Department of Mathematics, University of California, Riverside, CA 92521}
 \email{xufeng@math.ucr.edu }
\maketitle

\begin{abstract}
Categorical coset constructions are investigated and Kac-Wakimoto Hypothesis associated with pseudo unitary modular tensor categories is proved. In particular, the field identifications are obtained. These results are applied to the coset constructions in the theory of vertex operator algebra.

\end{abstract}

\section{Introduction}
\setcounter{equation}{0}

Introduced in \cite{GKO}, the coset construction deals with a pair of rational conformal field theories, and is a very powerful tool in the study of conformal field theory and vertex operator algebras.
It has been conjectured \cite{MS}, \cite{W} that all rational conformal field theories are related to the coset constructions, orbifold constructions and extensions. While the orbifold theory is relatively better understood due to the work in \cite{FLM, DVVV, DHVW, DM, DLM1, DLM3, DLM4, HMT, DJX, DRX, DNR}, the coset theory has not been investigated much in general except in conformal nets setting \cite{X1,X2,X3,X4}. 

From the point of view of vertex operator algebra, the coset construction studies the commutant $U^c$ \cite{FZ} of a vertex operator subalgebra $U=(U,Y,{\bf 1},\omega^1)$ in vertex operator algebra $V=(V,Y,{\bf 1},\omega).$ Assume that $L(1)\omega^1=0$ where
$L(1)$ is the component operator of $Y(\omega,z)=\sum_{n\in\Z}L(n)z^{-n-2}.$
Then the commutant
$U^c=\{v\in V|u_nv=0 \ for \ u\in U, n\geq 0\}$ is also
vertex operator subalgebra of $V$ with Virasoro element
$\omega^2=\omega-\omega^1.$ The $U^c$ is called the coset vertex
operator algebra associated with the pair $U\subset V.$
It is clear from the definition that $Y(u,z)$ and $Y(v,z)$ for
$u\in U,v\in U^c$ commute on any weak $V$-module and this is how the duality of Schur-Weyl type appears in the coset construction. One of the main problems is how to decompose an arbitrary irreducible $V$-module into a direct sum of irreducible  $U\otimes U^c$-modules.

We approach the coset constructions categorically in this paper. Let $\CC_1, \CC_2$ be pseudo unitary modular tensor categories. We use $\Or(\CC_1)=\{W^\alpha|\alpha\in J\}$  to denote the isomorphism classes of simple objects of $\CC_1$ and we assume that  $W^1=1_{\CC_1}.$  Let $A\in \CC_1\otimes \CC_2$ be a regular commutative algebra. Then local $A$-modules $\CC=(\CC_1\otimes \CC_2)_A^0$ is also a modular tensor category. Let  $\Or(\CC)=\{M^i|i\in I\}$ be the isomorphism classes of simple objects of $\CC$  with  $M^1=A.$ 
Then 
$$M^i\cong\bigoplus_{\alpha\in J_i}W^\alpha\otimes M^{(i,\alpha)}$$ 
as objects in $\CC_1\otimes \CC_2$ for $i\in I$ and a subset $J_i$ of $J.$   The main goal is to understand $M^{(i,\alpha)}$ under the assumptions that $\CC_1,\CC_2$  are pseudo unitary,  $M^{(1,1)}=1_{\CC_2}$ and $\dim\Hom_{\CC_2}(1_{\CC_2},M^{(1,\alpha)})=\delta_{1_{\CC_1},\alpha}.$ 
It turns out that the Kac-Wakimoto set 
$$\KW=\{W^\beta\in \Or(\CC_1)|A\boxtimes_{\CC_1\otimes \CC_2}(W^\beta\otimes 1_{\CC_2})\in\CC\}$$
plays an essential role in studying $M^{(i,\alpha)}.$  Note that $1_{\CC_1}\in \KW.$ So the Kac-Wakimoto set is not empty. For short, set 
$$a_{\beta\otimes 1}=A\boxtimes_{\CC_1\otimes \CC_2}(W^\beta\otimes 1_{\CC2}).$$ 
Use the  Kac-Wakimoto set to define an equivalence relation on $\Or(\CC)$ such that $M^i,M^j$ are equivalent if and only if there exists $W^\beta\in \KW$ and $M^i$ is a direct summand of $M^j\boxtimes_A a_{\beta\otimes 1}.$ Then $M^i_{\CC_2}=M^j_{\CC_2}$ if $M^i,M^j$ are equivalent and  $M^i_{\CC_2}\cap M^j_{\CC_2}=\emptyset$ otherwise where $M^i_{\CC_2}$ is the simple objects of $\CC_2$ appearing in $M^i.$ This gives the first identification between $M^{(i,\alpha)}$ and $M^{(j,\gamma)}$ for $\alpha\in J_i$ and $\gamma\in J_j.$

Using the  Kac-Wakimoto set we can also give an upper bound 
of  $\dim\Hom_{\CC_2}(M^{(i,\alpha)}, M^{(i,\alpha)})$ and a precise and new formula $\dim M^{(i,\alpha)}=c_i d_id_\alpha$ where $c_i=\frac{\sum_{\beta\in J_1}d_{\beta}^2}{\sum_{\gamma\in J_i}d_{\gamma}^2}\leq 1,$ $d_i=\dim M^i$ and $d_\alpha=\dim W^\alpha.$ If $\KW$ consists of $1_{\CC_1}$ only then all $M^{(i,\alpha)}$ are inequivalent simple objects in  $\CC_2.$ Furthermore,  $c_i=1$ for all $i$ is equivalent to that $\KW$ forms a group. In this case, all the simple objects in $\CC_2$ appearing in $M^{(i,\alpha)}$ have the same dimension. If $\KW$ forms a cyclic group, all the simple objects in $\CC_2$ appearing in $M^{(i,\alpha)}$ are multiplicity-free.

The importance of the Kac-Wakimoto set was first noticed in \cite{KW} when they studied the branching functions
associated with a pair of simple Lie algebras $\p\subset \g$ \cite{K}. Let $\hat{\g}$ be the corresponding affine Kac-Moody algebra, $P_+^k$ be the set of dominant weights of level $k$ which is a positive integer, $L_{\hat\g}(\Lambda)$ is the corresponding irreducible highest weight module with highest weight $\Lambda\in P_+^k.$ Then $L_{\hat\g}(\Lambda)$ is a $
\hat\p$-module of level $\dot{k}$ with the following decomposition
$$L_{\hat\g}(\Lambda)=\bigoplus_{\lambda\in \dot{P}_+^{\dot{k}}}L_{\hat\p}(\lambda)\otimes L(\Lambda,\lambda)$$
where $L(\Lambda,\lambda)$ is the multiplicity space of $L_{\hat\p}(\lambda).$ Let $h_{\Lambda,\lambda}$ be the minimal weight of $L(\Lambda,\lambda).$ 
Let $$E=\{(\Lambda,\lambda)|\Lambda\in P_+^k, \lambda\in\dot{P}_+^{\dot{k}},L(\Lambda,\lambda)\ne 0\}.$$
Then Kac-Wakimoto set  in this case can be identified with 
$$\KW=\{(\Lambda,\lambda)\in E|h_{\Lambda,\lambda}=0\}.$$
Let $\chi_{\Lambda}(\tau)$ be  the character of $L_{\hat\g}(\Lambda)$ and  $\dot {\chi}_{\lambda}(\tau)$ be the character of $L_{\hat\p}(\lambda).$ It follows from \cite{KP} that the space spanned by the irreducible characters is a representation of $SL_2(\Z).$ In particular,
$$	\chi_{\Lambda}(-1/\tau)=\sum_{M\in P_+^k}s_{\Lambda M}\chi_{M}(\tau).$$ 
Similarly,
$$\dot\chi_{\lambda}(-1/\tau)=\sum_{\mu\in  \dot P_+^{\dot k}}\dot s_{\lambda \mu}\dot\chi_{\mu}(\tau).$$ 
To understand the asymptotic properties of branching functions - characters of $L(\Lambda,\lambda),$ Kac-Wakimoto proposed the following hypothesis: For any $(\Lambda,\lambda)\in E, (M,\mu)\in \KW,$
$$s_{\Lambda M}\overline{\dot s_{\lambda\mu}}\geq 0.$$
Unfortunately, there are counterexamples to the Kac-Wakimoto hypothesis \cite{X3, X4}. 

Another goal of this paper is to explain why the Kac-Wakimoto hypothesis does not hold in these examples. We also give a sufficient condition under which the Kac-Wakimoto hypothesis holds using the categorical coset construction setting.  Note that both $L_{\hat\g}(k\Lambda_0)$ and $L_{\hat\p}(\dot k\lambda_0)$ are rational, $C_2$-cofinite vertex operator algebras whose irreducible modules are 
$\{L_{\hat\g}(\Lambda)|\Lambda\in P_+^k\}$, and $\{L_{\hat\p}(\lambda)|\lambda\in \dot{P}_+^{\dot{k}}\},$
respectively, where $\Lambda_0, \lambda_0$ are the fundamental weights associated to the index $0.$ 
Then each $L(\Lambda,\lambda)$ is a module for the coset VOA $L_{\hat\p}(\dot k\lambda_0)^c.$ In general,
 $L_{\hat\p}(\dot k\lambda_0)^{cc}$ may not equal to   $L_{\hat\p}(\dot k\lambda_0).$ 
We denote the Virasoro operators of $L_{\hat\g}(k\Lambda_0)$ and $L_{\hat\p}(\dot k\lambda_0)$ by
 $L_\g(n),$ and $L_\p(n),$ respectively. Then each $L(\Lambda,\lambda)$ is a unitary representation of 
  the Virasoro operators $L_\g(n)-L_\p(n).$ This implies that  $h_{\Lambda,\lambda}\geq 0$ and 
  $h_{\Lambda,\lambda}=0$ if and only if $L_{\hat\p}(\dot k\lambda_0)^c$ is a submodule of $L(\Lambda,\lambda).$ So we can define $\KW$-set as
  $$\KW=\{(\Lambda,\lambda)\in E| \Hom_{L_{\hat\p}(\dot k\lambda_0)^c} (L_{\hat\p}(\dot k\lambda_0)^c,L(\Lambda,\lambda))\ne 0\}.$$
  Now let $\CC$ be the module category of $L_{\hat\g}(k\Lambda_0),$ $\CC_1$ the module category of $L_{\hat\p}(\dot k\lambda_0)$ and $\CC_2$ the module category of $L_{\hat\p}(\dot k\lambda_0)^c.$
  From \cite{H}, both $\CC$ and $\CC_1$ are modular tensor categories. Also according to the coset construction conjecture, $L_{\hat\p}(\dot k\lambda_0)^c$ should be rational and $C_2$-cofinite, and $\CC_2$ is also a modular tensor category. Moreover, $L_{\hat\g}(k\Lambda_0)$ is a regular commutative algebra in $\CC_1\otimes \CC_2.$ So we can study the Kac-Wakimoto hypothesis in a general categorical coset setting.  With the assumptions in a categorical setting, we show that the Kac-Wakimoto hypothesis holds. Notice that the assumptions that $M^{(1,1)}=1_{\CC_2}$ and $\dim\Hom_{\CC_2}(1_{\CC_2},M^{(1,\alpha)})=\delta_{1,\alpha}$ in categorical setting is equivalent to the assumption $U^{cc}=U$ in VOA coset construction setting. In all these counter examples to Kac-Wakimoto hypothesis $L_{\hat\p}(\dot k\lambda_0)^{cc}$ strictly larger than  $L_{\hat\p}(\dot k\lambda_0).$ This was also noticed in Section 3.1 of \cite{X3}.

To prove   Kac-Wakimoto Hypothesis we first establish a general result in category theory.
 First, we need some general results that will be used in the proof of the Kac-Wakimoto Hypothesis.  Given a modular tensor category ${\DD}$ and a regular commutative algebra $B\in \DD$ we denote the $B$-module category in $\DD$ by $\DD_B$.  Then $\DD_B$ is a fusion category. For any $M\in\DD_B$ we define a linear map $T_M: K(\DD_B)\to K(\DD_B)$ such that $T_M(N)=M\boxtimes_B N$ for any
 $N\in \Or(\DD_B)$ where $K(\FF)$ is the fusion algebra of $\FF$ for any fusion category $\FF.$
 Also set  $a_{\lambda}=B\boxtimes_{\DD}\lambda$ for  $\lambda\in \Or(\DD)$ and 
$\Or(\DD_B^{0})=\{\sigma_i|i\in\Delta\}.$  For short we denote $T_{\sigma_i}$ by $T_i$ and 
$T_{a_{\lambda}}$by $T_{\lambda}.$ Then $\{T_{i}, T_{\lambda} | i\in \Delta, \lambda\in \Or(\DD)\}$
commute with each other and can be diagonalized simultaneously  \cite{DMNO}.  It is well known that $\lambda\mapsto a_{\lambda}$ is an algebra homomorphism from $K(\DD)$ to $K(\DD_B).$ 
So $K(\DD_B)$ has a basis $v^{(i,\mu,m)}$ with $i\in \Delta$ and $\mu\in D$ such that
$$T_{j}v^{(i,\mu,m)}=\frac{s_{ji}}{s_{1i}}v^{(i,\mu,m)} $$
$$T_{\lambda}v^{(i,\mu,m)}= \frac{{s}^\DD_{\lambda\mu}}{{s}^\DD_{1\mu}} v^{(i,\mu,m)}$$
for all $j$ and $\lambda$ where $m$ is the index of basis vectors of the eigenspace with indicated eigenvalues. It is possible that for some $i,\mu,$ there are no eigenvectors $v^{(i,\mu,m)}.$ The key theorem is that there are  eigenvectors $v^{(i,\mu,m)}$ if and only if $\sigma^i$ is a direct summand of 
$a_{\mu}.$  This result plays an essential role in the proof of the Kac-Wakimoto Hypothesis and was obtained in the conformal net setting in Section 3.1 of \cite{X3}.

The paper is organized as follows. We give the basic materials on vertex operator algebras and their various modules in Section 2. We review the basics of the fusion category in Section 3. 
We give a categorical setting on coset construction in Section 4. We also define the Kac-Wakimoto set and discuss some properties of the Kac-Wakimoto set.  In Section 5 we prove that the space spanned by all $M^{(i,\alpha)}$ in $K(\CC_2)$ is invariant under the action of $S$-matrix.  We also give a dimension formula for $M^{(i,\alpha)}$ in terms of the Kac-Wakimoto set. In Section 6, we use the Kac-Wakimoto set to determine whether simple objects from $\CC_2$ appearing $M^i, M^j$ are the same, or there are no intersections.  Furthermore, we give several equivalent conditions on when $\KW$ forms a group. Section 7 is devoted to the proof of the Kac-Wakimoto hypothesis. We present more results on $M^{(i,\alpha)}$ in Section 8. We apply results on categorical cosets to VOA coset constructions in Section 9.

\section{Basics on vertex operator algebras}
\setcounter{equation}{0}
In this section, we review the basic concepts of vertex operator algebras including various notions of modules, rationality, and $C_2$-cofiniteness. We also discuss the commutant $U^c$ of a vertex operator subalgebra $U$  in $V$\cite{FZ}.

 Let $(V,Y,{\bf 1},\omega)$ be a vertex operator algebra (see \cite{B} and \cite{FLM}).
We first recall from \cite{DLM2} the definitions of weak module,
admissible module, ordinary module for a vertex operator algebra $V.$
 A {\em weak module}
 $M$ for $V$ is a vector space equipped with a linear map
$$\begin{array}{l}
V\to (\mbox{End}\,M)[[z^{-1},z]]\label{map}\\
v\mapsto\displaystyle{Y_M(v,z)=\sum_{n\in \Z}v_nz^{-n-1}\ \ (v_n\in\mbox{End}\,M)}
\mbox{ for }v\in V\label{1/2}
\end{array}$$
satisfying the following conditions for $u,v\in V$,
$w\in M$:
\begin{eqnarray*}\label{e2.1}
& &v_nw=0\ \ \  				
\mbox{for}\ \ \ n\in \Z \ \ \mbox{sufficiently\ large};\label{vlw0}\\
& &Y_M({\bf 1},z)=1;\label{vacuum}
\end{eqnarray*}
 \begin{equation*}\label{jacobi}
\begin{array}{c}
\displaystyle{z^{-1}_0\delta\left(\frac{z_1-z_2}{z_0}\right)
Y_M(u,z_1)Y_M(v,z_2)-z^{-1}_0\delta\left(\frac{z_2-z_1}{-z_0}\right)
Y_M(v,z_2)Y_M(u,z_1)}\\
\displaystyle{=z_2^{-1}\delta\left(\frac{z_1-z_0}{z_2}\right)
Y_M(Y(u,z_0)v,z_2)}.
\end{array}
\end{equation*}
This completes the definition. We denote this module by
$(M,Y_M)$ (or briefly by $M$).

An ({\em ordinary}) $V$-module is a  weak $V$-module which
carries a $\C$-grading
$$M=\bigoplus_{\lambda \in{\C}}M_{\lambda} $$
such that $\dim M_{\l}$ is finite and $M_{\l+n}=0$
for fixed $\l$ and $n\in {\Z}$ small enough. Moreover one requires that
$M_{\l}$ is the eigenspace for $L(0)$ with eigenvalue $\lambda:$
$$L(0)w=\l w=(\mbox{wt}\,w)w, \ \ \ w\in M_{\l}.$$

An {\em admissible} $V$-module is
a  weak $V$-module $M$ which carries a
${\Z}_{+}$-grading
$$M=\bigoplus_{n\in {\Z}_{+}}M(n)$$
($\Z_+$ is the set all nonnegative integers) such that if $r, m\in {\Z} ,n\in {\Z}_{+}$ and $a\in V_{r}$
then
$$a_{m}M(n)\subseteq M(r+n-m-1).$$
Note that any ordinary module is an admissible module.

A vertex operator algebra $V$ is called {\em rational} if any admissible
module is a direct sum of irreducible admissible modules. It was proved in
[DLM3] that if $V$ is rational then there are only
finitely many inequivalent irreducible admissible modules $V=M^0,..., M^p$
and each irreducible admissible module is an ordinary module. Each $M^i$ has weight space decomposition
$$M^i=\bigoplus_{n\geq 0}M^i_{\lambda_i+n}$$
where $\lambda_i\in\C$ is a complex number such that $M^i_{\lambda}\ne 0$ and $M^i_{\lambda_i+n}$ is the eigenspace of $L(0)$ with eigenvalue $\lambda_i+n.$ $\lambda_i$ is called the weight of $M^i.$

A vertex operator algebra $V$ is called $C_2$-cofinite if $\dim V/C_2(V)<\infty$ where
$C_2(V)$ is the subspace of $V$ spanned by $u_{-2}v$ for $u,v\in V$ \cite{Z}. If $V$ is both rational and
$C_2$-cofinite, then $\lambda_i$ and central charge $c$ are rational numbers \cite{DLM4}.

A vertex operator algebra $V$ is of {\em CFT type} if
$V$ is simple,  $V=\oplus_{n\geq 0}V_n$ and $V_0=\C {\bf 1}.$  $V$ is of strong CFT type if
$V$ further satisfies $L(1)V_1=0.$ It follows from \cite{L} that there is a unique nondegenerate symmetric
invariant bilinear from $\<,\>$ \cite{FHL} on $V$ such that $\<\1,\1\>=1.$ In particular,
the restriction of the form to $V_n$ is nondegenerate. 

\begin{lem} If $V$ is rational VOA of CFT type then $V$ is simple. 
\end{lem}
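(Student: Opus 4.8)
The plan is to deduce the simplicity of $V$ from the irreducibility of $V$ viewed as a module over itself. This reduction is legitimate because an ideal of $V$ is exactly a $V$-submodule of the adjoint module: the defining condition $u_nI\subseteq I$ for all $u\in V$, $n\in\Z$ is the submodule condition, and since $L(0)=\omega_1$ preserves any ideal, every ideal is automatically $\Z$-graded and hence an ordinary (admissible) submodule. So it suffices to prove that the adjoint module $V$ has no proper nonzero admissible submodule. First I would use rationality: the adjoint module is ordinary, hence admissible, so it decomposes as a finite direct sum of irreducible ordinary modules $V=\bigoplus_i M^i$, where each $M^i=\bigoplus_{n\ge 0}M^i_{\lambda_i+n}$ has a lowest weight $\lambda_i$.

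Next I would locate the vacuum among the summands. Because $V$ is of CFT type, $V=\bigoplus_{n\ge 0}V_n$ and $V_0=\C\1$, so every weight of $V$ is a nonnegative integer and $\dim V_0=1$. Consequently each $\lambda_i\ge 0$, and the degree-zero subspace $(M^i)_0$ is nonzero precisely when $\lambda_i=0$. Comparing $V_0=\bigoplus_i (M^i)_0$ with $V_0=\C\1$ forces a single summand $M^{i_0}$ to satisfy $\lambda_{i_0}=0$ and $(M^{i_0})_0=\C\1$; in particular $\1\in M^{i_0}$.

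The key step is then the generation property of the vacuum. For each $v\in V$ the vacuum axiom gives $Y(v,z)\1\in v+z\,V[[z]]$, so that $v_{-1}\1=v$. Hence the $V$-submodule generated by $\1$ contains every element of $V$ and equals $V$. On the other hand $\1\in M^{i_0}$ and $M^{i_0}$ is a submodule, so this generated submodule lies inside $M^{i_0}$; therefore $V=M^{i_0}$ is irreducible as a module over itself, and by the reduction above $V$ is simple.

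I do not expect a genuine obstacle: the proof invokes rationality only through complete reducibility of the adjoint module and CFT type only through the grading $V=\bigoplus_{n\ge 0}V_n$ with $V_0=\C\1$. The two points that warrant care are the identification of ideals with $\Z$-graded submodules, which makes module irreducibility equivalent to simplicity, and the dimension count $\dim V_0=1$ together with $\lambda_i\ge 0$, which isolates exactly one summand containing the vacuum.
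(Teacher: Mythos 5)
Your proof is correct and follows essentially the same route as the paper's: both use rationality to write the adjoint module as a direct sum of irreducibles, use $V_0=\C\1$ to isolate the summand containing the vacuum, and use $v_{-1}\1=v$ to see that $\1$ generates all of $V$. Your version is simply more explicit about the decomposition and about why irreducibility of $V$ as a module over itself gives simplicity as a VOA.
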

\begin{proof} Let $W$ be the maximal $V$-submodule of $V$ such that $W_0=W\cap V_0=0.$ Then $V=V^1\oplus W$ as $V$-module where $V^1$ is the submodule generated by
	${\bf 1}.$ It follows immediately that $V=V^1$ is irreducible $V$-module.
	\end{proof}

Let $V=(V,Y,{\bf 1},\omega)$ be a vertex operator algebra and $U=(U,Y, {\bf 1},\omega^1)$ is
a vertex operator subalgebra of $V.$ It is clear that $L^1(0)|_U=
L(0)|_U.$
The {\em commutant} $U^c$ of $U$ is defined to be
$$U^c=\{u\in V|v_nu=0, v\in U,n\geq 0\}$$
(cf. [FZ]).
Set $\omega^{2}=\omega-\omega^{1}$ and $Y(\omega^i,z)=\sum_{n\in \Z}L^i(n)z^{-n-2}.$ $U^c$ can be regarded
as the space of vacuum-like vectors  for $U$ [L1],
that is, $$U^c=\{u\in V|L^1(-1)u=0\}.$$

The following Lemma is well known (see \cite{FZ}, \cite{LL}).
\begin{lem}\label{imme} Let $V=(V,Y,{\bf 1},\omega)$ be a vertex operator algebra
and $U=(U,Y,{\bf 1},\omega^1)$ is a vertex operator subalgebra of $V$ such that $L(1)\omega^1=0.$

 (1) On any weak $V$-module, the actions of $U$ and $U^c$ are commutative.
That is,
$$Y(u,z_1)Y(v,z_2)=Y(v,z_2)Y(u,z_1)$$
 for $u\in U$ and $v\in U^c.$

(2) $U^c=(U^c,Y,{\bf 1},\omega^{2})$ is
also a vertex operator subalgebra of $V.$

(3) $U^{cc}=(U^c)^c\supset U$ and $U^{ccc}=U^c.$

(4) $U\otimes U^c$ is a subalgebra of $V.$
\end{lem}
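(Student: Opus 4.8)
The plan is to establish each of the four parts of Lemma~\ref{imme} by working from the definitions of commutant, vertex operator subalgebra, and the Jacobi identity, leaning on the hypothesis $L(1)\omega^1=0$ only where it is genuinely needed (parts (2) and onward). Throughout, recall that $U^c=\{u\in V\mid v_nu=0,\ v\in U,\ n\geq 0\}$, equivalently the space of vacuum-like vectors $\{u\in V\mid L^1(-1)u=0\}$.

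For part (1), I would start from the Jacobi identity on a weak $V$-module with $u\in U$ and $v\in U^c$. The commutator formula extracted from the Jacobi identity gives
\[
[Y(u,z_1),Y(v,z_2)]=\operatorname{Res}_{z_0}\,z_2^{-1}\delta\!\left(\frac{z_1-z_0}{z_2}\right)Y\bigl(Y(u,z_0)v,z_2\bigr).
\]
The point is that $Y(u,z_0)v=\sum_{n\in\Z}(u_nv)z_0^{-n-1}$ has only nonnegative powers of $z_0$ contributing to the residue, i.e. only the terms $u_nv$ with $n\geq 0$ survive after taking $\operatorname{Res}_{z_0}$. Since $v\in U^c$ forces $u_nv=0$ for all $u\in U$ and $n\geq 0$, the entire right-hand side vanishes, giving commutativity of $Y(u,z_1)$ and $Y(v,z_2)$. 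This part does not use $L(1)\omega^1=0$.

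For part (2), I must verify that $(U^c,Y,{\bf 1},\omega^2)$ is a vertex operator subalgebra with Virasoro element $\omega^2=\omega-\omega^1$. First, $U^c$ is closed under all products $u_n$ with $u,w\in U^c$: this follows from the Jacobi identity together with part (1), since $U^c$ is the joint kernel of the nonnegative modes of $U$ and those modes commute through the action of $U^c$. Next, ${\bf 1}\in U^c$ trivially. The substantive step is that $\omega^2\in U^c$ and that $\omega^2$ is a Virasoro element: because $\omega^1\in U$ is the conformal vector of $U$, one checks $\omega^1_n\omega^2=0$ for $n\geq 0$ using $[L^1(m),L^2(n)]=0$ (the two Virasoro algebras commute, which is where the hypothesis $L(1)\omega^1=0$, equivalently $L^1(1)\omega=\omega^1$ being well-behaved, is used to guarantee $\omega^2$ has the correct commutation and that $L^2(n)=L(n)-L^1(n)$ defines a Virasoro algebra of the complementary central charge). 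The grading and the $L(-1)$-derivative property on $U^c$ are inherited from $V$. The main care here is checking that $\omega^2$ indeed lies in $U^c$ and generates a Virasoro subalgebra acting correctly.

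For parts (3) and (4), I would argue as follows. For (3), $U\subseteq U^{cc}$ is immediate from part (1): every $u\in U$ commutes with all of $U^c$, hence kills the nonnegative modes, so $u\in (U^c)^c=U^{cc}$; applying the commutant operation to $U\subseteq U^{cc}$ and using its order-reversing nature gives $U^{ccc}\subseteq U^c$, while replacing $U$ by $U^c$ in $U\subseteq U^{cc}$ gives $U^c\subseteq U^{ccc}$, whence $U^{ccc}=U^c$. For (4), $U\otimes U^c$ being a subalgebra of $V$ means the images of $U$ and $U^c$ generate a subalgebra on which their fields commute; this is exactly the content of part (1) combined with parts (2) and (3), identifying the subalgebra generated by $U$ and $U^c$ with their tensor product. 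The main obstacle I anticipate is part (2), specifically the verification that $\omega^2$ is a genuine Virasoro element lying in $U^c$ and that $L^2(n)=L(n)-L^1(n)$ satisfies the Virasoro relations with the complementary central charge — this is where the hypothesis $L(1)\omega^1=0$ is indispensable, since it is needed to ensure $\omega^1$ and $\omega^2$ are orthogonal conformal vectors whose modes commute.
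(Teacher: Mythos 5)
The paper offers no proof of this lemma at all: it is stated as ``well known'' with a pointer to \cite{FZ} and \cite{LL}, so there is no in-paper argument to compare against. Your sketch reconstructs the standard proof from those references, and its architecture is sound: part (1) via the commutator formula, observing that only the modes $u_nv$ with $n\ge 0$ contribute to the residue (correct, since the expansion of $(z_1-z_0)^k$ involves only nonnegative powers of $z_0$); closure of $U^c$ under products via the same commutator formula; part (3) by the order-reversing (Galois-connection) formalism; part (4) from the commutativity of the two sets of fields.

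Two places need tightening. First, in part (3) you assert that $u\in U$ ``commutes with all of $U^c$, hence kills the nonnegative modes''; commutativity of the fields $Y(u,z_1)$ and $Y(v,z_2)$ does not by itself yield $v_nu=0$ for $n\ge 0$. What is actually needed is that the commutant relation is symmetric, which follows from skew-symmetry: $Y(v,z)u=e^{zL(-1)}Y(u,-z)v$, so if $u_nv=0$ for all $n\ge 0$ then $Y(u,-z)v$, and hence $Y(v,z)u$, has no negative powers of $z$. Second, in part (2) the step ``one checks $\omega^1_n\omega^2=0$ for $n\geq 0$ using $[L^1(m),L^2(n)]=0$'' is circular as written: the commutativity of the two Virasoro algebras is a \emph{consequence} of, not an input to, the computation of $\omega^1_n\omega^2$ (equivalently of $L(n)\omega^1$ for $n\ge -1$), which is carried out via skew-symmetry and uses $L(1)\omega^1=0$. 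Note also that membership $\omega^2\in U^c$ requires $u_n\omega^2=0$ for \emph{all} $u\in U$ and $n\ge 0$, not merely for $u=\omega^1$; this is exactly why the vacuum-like-vector characterization $U^c=\{v\in V\mid L^1(-1)v=0\}$, which you quote at the outset, is the right tool, and it is how the argument is closed in \cite{LL} and \cite{FZ}. With those two repairs your outline is the standard proof, and you correctly isolate part (2) as the only place where the hypothesis $L(1)\omega^1=0$ is indispensable.
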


Let $M=\oplus_{n\geq 0}M(n)$ be an irreducible admissible $V$-module which is completely reducible $U$-module.
Then
$$M=\bigoplus_{j\in J}W^j\otimes \Hom_{U}(W^j,M)$$
where $W^j$ are the irreducible $U$-modules occurring in $M$ and $\Hom_{U}(W^j,M)$ is the space of $U$-module homomorphism from $W^j$ to $M.$

 \begin{lem}\label{l6.1} Let $M$ and $W^j$ be as above. Let $0\ne x\in W^j$ be a homogeneous vector. Then $f\mapsto f(x)$ is an injective linear map from $\Hom_{U}(W^j,M)$ to $M.$ That is, we can identify $\Hom_{U}(W^j,M)$ with its image $M_j$ in $M.$
\end{lem}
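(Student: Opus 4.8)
The plan is to prove that the evaluation map $\mathrm{ev}_x\colon \Hom_U(W^j,M)\to M,\ f\mapsto f(x)$, is injective; linearity is immediate, since $(\lambda f+g)(x)=\lambda f(x)+g(x)$ for $f,g\in\Hom_U(W^j,M)$ and $\lambda\in\C$. Thus the only content is injectivity, and I would reduce it to the standard fact that a nonzero vector generates an irreducible module, combined with the intertwining property of a $U$-module homomorphism.

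First I would record the generation step. Since $W^j$ is an irreducible $U$-module and $x\ne 0$, the $U$-submodule of $W^j$ generated by $x$ is nonzero, hence equals $W^j$ by irreducibility. Concretely this gives $W^j=\Span\{a^{(1)}_{n_1}\cdots a^{(r)}_{n_r}x \mid a^{(i)}\in U,\ n_i\in\Z,\ r\geq 0\}$; that is, every vector of $W^j$ is a finite linear combination of iterated component operators of elements of $U$ applied to $x$. (This span is automatically a $U$-submodule, being closed under every $b_m$ by prepending $b_m$, and it contains $x$, so it is all of $W^j$.) Now suppose $f\in\Hom_U(W^j,M)$ satisfies $f(x)=0$. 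Because $f$ is a homomorphism of $U$-modules, it commutes with every component operator $a_n$ arising from $Y(a,z)=\sum_{n}a_nz^{-n-1}$ with $a\in U$, i.e.\ $f(a_n y)=a_n f(y)$ for all $y\in W^j$. Applying this repeatedly to the spanning vectors above yields $f(a^{(1)}_{n_1}\cdots a^{(r)}_{n_r}x)=a^{(1)}_{n_1}\cdots a^{(r)}_{n_r}f(x)=0$, and since these vectors span $W^j$ we conclude $f\equiv 0$. Hence $\mathrm{ev}_x$ is injective, and we may identify $\Hom_U(W^j,M)$ with its image $M_j\subseteq M$.

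I expect no genuine obstacle here: the argument is entirely formal once irreducibility of $W^j$ is invoked. The single point that deserves a line of care is the generation claim in the first step, namely that one nonzero vector generates the whole irreducible $U$-module; this is just the observation that the displayed span is a nonzero submodule. It is worth remarking that the homogeneity hypothesis on $x$ is not actually needed for injectivity—only $x\ne 0$ is used—so it is recorded here because it is convenient later, when one wants $M_j$ to sit inside a fixed $L^1(0)$-weight space of $M$ and thereby read off the multiplicity space $\Hom_U(W^j,M)$ concretely as homogeneous vectors of $M$.
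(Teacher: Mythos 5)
Your argument is correct and is exactly the one the paper leaves implicit: its proof consists of the single line that the result is immediate from irreducibility of $W^j$, and your写out of the details (a nonzero vector generates the irreducible module, and a $U$-homomorphism intertwines the component operators) is the standard justification. No discrepancy to report.
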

\begin{proof} Since $W^j$ is an irreducible $U$-module, the result is immediate.
\end{proof}

\begin{lem}\label{l6.2} Let $M$ and $W^j$ be as above.  Then $\Hom_{U}(W^j,M)$ is a $U^c$-module such that for any $v\in U^c,$ $n\in\Z$  and $f\in \Hom_{U}(W^j,M),$ $(v_nf)(w)=v_nf(w)$ for all $w\in W^j.$ Moreover, $M_j$ is a $U^c$-submodule of $M$ and the identification in Lemma \ref{l6.1} is a $U^c$-module isomorphism.
\end{lem}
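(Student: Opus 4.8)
The plan is to transport the $V$-module structure on $M$ to $\Hom_U(W^j,M)$ through the injective evaluation map of Lemma \ref{l6.1}, the single genuinely new input being that the prescription $(v_nf)(w)=v_nf(w)$ produces a map that is again a $U$-homomorphism. Once that is checked, everything else follows formally from the fact that evaluation is an isomorphism onto its image.

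First I would fix a nonzero homogeneous $x\in W^j$ and recall from Lemma \ref{l6.1} that $\Phi\colon f\mapsto f(x)$ is an injective linear map $\Hom_U(W^j,M)\to M$ with image $M_j$. Since $U^c$ is a vertex operator subalgebra of $V$ by Lemma \ref{imme}(2), the space $M$ may be regarded as a $U^c$-module by restriction, and I would work with this structure throughout.

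The key step is to verify that for $v\in U^c$, $n\in\Z$ and $f\in\Hom_U(W^j,M)$, the map $v_nf\colon w\mapsto v_nf(w)$ again lies in $\Hom_U(W^j,M)$. Here I would invoke Lemma \ref{imme}(1): the equality $Y(u,z_1)Y(v,z_2)=Y(v,z_2)Y(u,z_1)$ on the weak $V$-module $M$ gives, on comparing coefficients, $u_mv_n=v_nu_m$ for all $u\in U$ and $m\in\Z$. Then for $u\in U$ and $w\in W^j$,
$$(v_nf)(u_mw)=v_nf(u_mw)=v_nu_mf(w)=u_mv_nf(w)=u_m(v_nf)(w),$$
where the second equality uses that $f$ is a $U$-homomorphism and the third uses the commutation just noted. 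Hence $v_nf$ is a $U$-homomorphism, so the formula $(v_nf)(w)=v_nf(w)$ does define an operator on $\Hom_U(W^j,M)$.

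Finally I would assemble the conclusions. By the definitions, $\Phi(v_nf)=(v_nf)(x)=v_nf(x)=v_n\Phi(f)$, so $\Phi$ intertwines the candidate $U^c$-action on $\Hom_U(W^j,M)$ with the restricted $U^c$-action on $M$; since $v_nf\in\Hom_U(W^j,M)$, this also shows $v_nM_j\subseteq M_j$, so $M_j$ is a $U^c$-submodule of $M$, and the module axioms (truncation, vacuum, the Jacobi identity, and the grading) hold on $M_j$ because they are inherited from the restriction of $M$. As $\Phi$ is a linear bijection onto $M_j$ intertwining the two actions, it transports this genuine module structure back, so $\Hom_U(W^j,M)$ is a $U^c$-module and $\Phi$ is the asserted $U^c$-module isomorphism. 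The only point needing care is the verification that $v_nf$ remains a $U$-homomorphism, which is exactly where the commutativity of the $U$- and $U^c$-actions enters; the remainder is a formal consequence of the evaluation isomorphism.
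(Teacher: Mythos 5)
Your proposal is correct and follows essentially the same route as the paper: the one substantive point in both is that $v_nf$ remains a $U$-homomorphism, verified via the commutativity of the $U$- and $U^c$-actions on $M$ from Lemma \ref{imme}(1), after which the module axioms and the statement about $M_j$ follow because $v_n$ genuinely acts on $M$ and evaluation at $x$ intertwines the two actions. Your write-up is in fact slightly more careful than the paper's (which checks the homomorphism property only against the chosen vector $x$ rather than an arbitrary $w\in W^j$), but the argument is the same.
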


\begin{proof} We first prove that $v_nf\in \Hom_{U}(W^j,M)$ for $v\in U^c,$ $m\in\Z$  and $f\in \Hom_{U}(W^j,M).$ Let $u\in U$ and $n\in \Z.$ Then $(v_mf)(u_nx)=v_mu_nf(x)=u_nv_mf(x)=u_n(v_mf)(x)$
where we have use the fact that $u_n,v_m$ commute on $M.$ The other module axioms is easy to check as
$v_n,$ in fact, acts on $M.$

To show $M_j$ is a $U^c$-submodule of $M$ we need to verify that $v_mM_j\subset M_j$ for
$v\in U^c$ and $m\in\Z.$ Again, let $f\in \Hom_{U}(W^j,M).$ Then $v_mf(x)=(v_mf)(x)$ is the image
of the homomorphism $v_mf.$ The isomorphism from $\Hom_{U}(W^j,M)$ to $M_j$ is clear.
In particular,  we have the following decomposition
$M=\oplus_{j\in J}W^j\otimes M^j$  as $U\otimes U^c$-module.
\end{proof}

The following result does not require that $U$ or $V$ is rational.  
\begin{prop}\label{np6.3} Assume that $V$ is a simple vertex operator algebra, $U=(U,Y,\1,\omega^1)$ is a simple vertex operator subalgebra of $V$ such that $L(1)\omega^1=0$ and $V$ is a completely reducible $U$-module.
Then $U^c$ is a simple vertex operator algebra.
\end{prop}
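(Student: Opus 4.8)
The plan is to show that any nonzero $V$-submodule structure is rigid enough to force simplicity of $U^c$, by exploiting the decomposition $V=\bigoplus_{j\in J}W^j\otimes M^j$ from Lemma \ref{l6.2} together with the invariant bilinear form on $V$. First I would observe that since $V$ is simple and $U$ is a simple vertex operator subalgebra with $L(1)\omega^1=0$, both $V$ and $U$ carry nondegenerate invariant bilinear forms, and the inclusion $U\subset V$ is compatible with these forms; in particular the form restricted to the vacuum component $W^1\otimes M^1$ (where $W^1=U$) pairs $M^1=U^c$ with itself nondegenerately. The key structural input is that $U^c$ appears in $V$ as the multiplicity space $\Hom_U(U,V)$ of the trivial $U$-module $W^1=U$, which via Lemma \ref{l6.1} is identified with the subspace $M_1=U^c\subset V$.

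Next I would set up the argument by contradiction: suppose $U^c$ is not simple. Since $U^c=(U^c,Y,\1,\omega^2)$ is of CFT type (its degree-zero part is $\C\1$ because $V$ is of the appropriate type and the vacuum is the unique lowest-weight vector), nonsimplicity produces a nonzero proper ideal, equivalently a nonzero homogeneous vector $0\ne a\in U^c$ of positive weight lying in the radical of the invariant form of $U^c$, or a proper $U^c$-submodule $N\subsetneq U^c$. The strategy is then to show that such an $N$ generates a proper nonzero $V$-submodule of $V$, contradicting simplicity of $V$. Concretely, I would consider the $V$-submodule of $V$ generated by $N\subset U^c\subset V$ and analyze it through the $U\otimes U^c$-decomposition: the actions of $U$ and $U^c$ commute on $V$ by Lemma \ref{imme}(1), so the $U$-action sends $N\subset W^1\otimes M^1$ into $\bigoplus_j W^j\otimes N'$ where $N'$ is the $U^c$-submodule generated by $N$, and crucially does not leave the multiplicity spaces uncontrollably.

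The main obstacle, and the step requiring the most care, is controlling how the full vertex operator $Y(v,z)$ for $v\in V$ acts on $N$ when $v$ lies outside $U\otimes U^c$; one must show that the $V$-submodule generated by a proper $U^c$-submodule $N$ of $U^c$ still meets $U^c$ in exactly $N$ (or at least properly), so that it cannot be all of $V$. Here I would use that $U^c$ is recovered as the space of vacuum-like vectors $\{u\in V\mid L^1(-1)u=0\}$, which is preserved under the $U^c$-action but interacts rigidly with the $U$-action. The cleanest route is likely to invoke nondegeneracy of the invariant form on $V$ together with the observation that $U\otimes U^c$-module maps respect the orthogonal decomposition over distinct $W^j$: any proper submodule of $U^c$ would have a nonzero orthogonal complement inside the $W^1\otimes U^c$ isotypic component, and this orthogonality must be respected by all of $V$ acting through the commuting pair, yielding a nontrivial invariant decomposition of $V$ and hence the desired contradiction. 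Assembling these pieces shows $N$ cannot exist, so $U^c$ is simple.
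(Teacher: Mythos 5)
There is a genuine gap, and it sits exactly at the step you flag as ``the main obstacle.'' Your plan is to show that a proper nonzero $U^c$-submodule $N\subset U^c$ generates a proper $V$-submodule, and your proposed mechanism is that the orthogonal decomposition coming from an invariant bilinear form ``must be respected by all of $V$ acting through the commuting pair.'' This is not justified and, as stated, is false: a general vector $v\in V$ does not act through $U\otimes U^c$, and its modes $v_n$ do not preserve the $U\otimes U^c$-isotypic components $W^j\otimes M^j$ of $V$, so no orthogonality argument confined to the component $W^1\otimes U^c$ controls the $V$-submodule generated by $N$. (Indeed, by simplicity of $V$ that submodule is all of $V$; the only way to get a contradiction is to compute its intersection with $U\otimes U^c$, which your argument never does.) A secondary problem is that you invoke CFT type, the radical of an invariant form, and nondegenerate invariant forms on $U^c$; none of these is among the hypotheses of the proposition (only simplicity of $V$ and $U$, $L(1)\omega^1=0$, and complete reducibility of $V$ over $U$ are assumed), so the form-theoretic scaffolding is not available.

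The missing idea is the following containment, which is what the paper's proof turns on. Write $V=\bigoplus_{i\in I}U^i\otimes (U^c)^i$ as $U\otimes U^c$-modules, with $U^0\otimes (U^c)^0=U\otimes U^c$. For $v\in U\otimes U^c$ and $u\in U^i\otimes (U^c)^i$, skew-symmetry gives $Y(u,z)v=e^{zL(-1)}Y(v,-z)u$, and since each $U^i\otimes(U^c)^i$ is a $U\otimes U^c$-submodule preserved by $L(-1)=L^1(-1)+L^2(-1)$, one gets $u_nv\in U^i\otimes (U^c)^i$ for all $n$. Hence for any nonzero $v\in U\otimes U^c$, the $V$-submodule generated by $v$ --- which is all of $V$ by simplicity --- meets $U\otimes U^c$ in exactly $\mathrm{span}\{u_nv\mid u\in U\otimes U^c,\ n\in\Z\}$, i.e.\ in the ideal of $U\otimes U^c$ generated by $v$. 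Therefore $U\otimes U^c$ is a simple vertex operator algebra, and since $U$ is simple this forces $U^c$ to be simple (a proper ideal $N\subset U^c$ would give the proper ideal $U\otimes N$ of $U\otimes U^c$). Your contradiction framework can be salvaged, but only by replacing the orthogonality heuristic with this skew-symmetry computation.
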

\begin{proof}
 From the discussion above  we have the decomposition
$$V=\bigoplus_{i\in I}U^i\otimes (U^c)^i$$
as $U\otimes U^c$-modules where $U^i$ are inequivalent $U$-modules occurring in $V$ and $(U^c)^i$ is the multiplicity space of $U^i$ in $V.$ We also assume $0\in I.$

Since $V$ is simple,   we know that for any nonzero $v\in U\otimes U^c,$
$$V=\<u_nv|u\in V,n\in\Z\>=\sum_{i\in I}\<u_nv|u\in U^i\otimes (U^c)^i,n\in\Z\>.$$
Clearly, $u_nv\in U^i\otimes (U^c)^i$ if $u\in U^i\otimes (U^c)^i.$ Thus
$U\otimes U^c$ is spanned by $u_nv$ for $u\in U\otimes U^c$ and $n\in\Z.$ Since $v$ is arbitrary
we see that $U\otimes U^c$ is a simple vertex operator algebra. Consequently, $U^c$ is a simple vertex operator algebra, as expected.
\end{proof}

\section{Basics on fusion categories }

We investigate categorical coset theory in the next few sections.  These results will be used later to study the rational coset theory for vertex operator algebra.

We first recall some basics of category theory from \cite{KO}, \cite{EGNO}.  An object $A$ in a braided fusion category $\CC$ is called regular commutative algebra  if there are morphisms
$\mu: A\boxtimes A\to A$ and $\eta: {\bf 1_\CC}\to A$ such that 
$\mu\circ(\mu\boxtimes {\rm id}_A)\circ \alpha_{A,A,A} = \mu\circ({\rm id}_A\boxtimes\mu)$,  $\mu\circ(\eta\boxtimes {\rm id}_A)\circ l_A^{-1}={\rm id}_A =\mu\circ({\rm id}_A\boxtimes\eta)\circ r_A^{-1},$
$\mu = \mu\circ c_{A,A}$
and
$\dim\Hom(\1_\CC,A)=1$ where $\alpha_{A,A,A}: A\boxtimes (A\boxtimes A)\to (A\boxtimes A)\boxtimes A$ is the associative isomorphism,  $l_A: \1_\CC\boxtimes A\to A$ is the left unit isomorphism, $r_A: A\boxtimes \1_\CC\to A$ is the right unit isomorphism and $c_{A,A}: A\boxtimes A\to A\boxtimes A$ is the braiding isomorphism. A left $A$-module $N$ is an object in $\CC$ with a morphism $\mu_N: A\boxtimes N\to N$ such that $\mu_N\circ (\mu\boxtimes \id_N)\circ \alpha_{A,A,N}=\mu_N\circ (\id_A\boxtimes \mu_N).$ We denote the left $A$-module category by $\CC_A.$ Let $N_{1}, N_{2} \in \CC_{A}$. Define $N_1\boxtimes_A N_2=N_1\boxtimes N_2/\mathrm{Im}(\mu_1-\mu_2)$ where $\mu_1, \mu_2: A\boxtimes N_1 \boxtimes N_2 \rightarrow N_1 \boxtimes N_2$ are defined by $\mu_1=\mu_{N_1}\boxtimes \mathrm{id}_{N_2}$, $\mu_2=(\mathrm{id}_{N_1}\boxtimes \mu_{N_2})\circ c_{A, N_1}\boxtimes \id_{N_2}$. 
Then  $\CC_A$ is a fusion category with tensor product $N_1\boxtimes_A N_2$. An $A$-module $N$ is called local if
$\mu_N\circ c_{N,A}\circ c_{A,N}=\mu_N$.  We denote the local $A$-module category by $\CC_A^0.$ Then  $\CC_A^0$ is a braided fusion category. Moreover, if $\CC$ is modular tensor category, so is $\CC_A^0$ \cite{KO}. For 
any $N\in \CC_A$ and $X\in\CC,$ $N\boxtimes X\in \CC_A.$  

Let $\CC$ be a fusion category whose Grothendieck ring is denoted by $K_0(\CC).$ Then there is a unique ring homomorphism $\FPdim: K_0(\CC)\to \BR$ satisfying
$\FPdim(M)\geq 1$ for any nonzero object $M.$ The Frobenius-Perron dimension of $\CC$ is defined to be $\FPdim(\CC)=\sum_{M\in \Or(\CC)}\FPdim (M)^2.$ In the case   $\CC$ is a fusion subcategory of the module category for
a vertex operator algebra $V$, the Frobenius-Perron dimension $\FPdim (M)$ is exactly the quantum dimension $\qdim_V(M)$ studied in \cite{DJX} and \cite{DRX}. For short we set
$\<X,Y\>=\dim\Hom(X,Y)$ for $X,Y\in\CC.$

Now assume that $\CC$ is a braided fusion category and $A$  is a regular commutative algebra in $\CC.$ The following identities give relations among dimensions of relevant categories and algebra:
$$\FPdim(\CC_A) =\frac{\FPdim(\CC)}{\FPdim(A)},\  \FPdim(\CC_A^0) =\frac{\FPdim(\CC)}{\FPdim(A)^2}$$
where the first identity holds for any braided fusion category $\CC$ and the second identity requires that $\CC$ is modular \cite{DMNO}.

From \cite{H}  we know that the module category ${\cal C}_{V}$ of any rational, $C_2$-cofinite, selfdual  vertex operator algebra $V$ is a modular tensor category with tensor product $\boxtimes.$ If $A$ is an extension of $V$ then $A$ is also rational, $C_2$-cofinite
\cite{ABD, HKL}. Moreover $A$ is  a regular commutative algebra in $\CC_{V},$   $(\CC_{V})_A$  is a fusion category and $(\CC_{V})_A^0$ is exactly $\CC_A$ \cite{HKL}.

\section{Categorical coset theory}

Let $\CC_1, \CC_2$ be modular tensor categories. Let $ \Or(\CC_1)=\{W^\alpha|\alpha\in J\}$ and $\Or(\CC_2)=\{N^{\phi}|\phi\in K\}$ with  $W^1=1_{\CC_1}$ and $N^1=1_{\CC_2}$ 
where we assume that $1\in J, 1\in K.$ Let $A$ be a regular commutative algebra in $\CC_1\otimes \CC_2$. Then $\CC=(\CC_1\otimes \CC_2)_A^0$ is also a modular tensor category. Let  $\Or(\CC)=\{M^i|i\in I\}$ with $1\in I$ and $M^1=A.$ Then $M^i\cong\oplus_{\alpha\in J}W^\alpha\otimes M^{(i,\alpha)}$ 
as objects in $\CC_1\otimes \CC_2$ for $i\in I.$  For short we  identify $\alpha, \phi ,  (i,\alpha) $ with $W^{\alpha}, N^{\phi}, M^{(i,\alpha)},$ respectively. Also, set 
$J_i=\{\alpha \mid (i,\alpha)\neq 0 \}.$

We assume the following in this paper:

(1) $\CC_1$ and $\CC_2$ are pseudo unitary. That is, for any $X$ in $\CC_1,\CC_2,$ $\FPdim(X)=\dim (X)$ where $\dim(X)$ is the categorical dimension of $X.$ 

(2) $M^{(1,1)}=1_{\CC_2}$ and $\Hom_{\CC_2}(1_{\CC_2},M^{(1,\alpha)})=\delta_{1,\alpha}.$ 

For $\alpha\in J,$  $\phi\in K$  let  
$$a_{\alpha\otimes \phi}=A\boxtimes_{\CC_1\otimes \CC_2} (\alpha\otimes \phi), $$
$$\ a_{\alpha\otimes 1}=a_{\alpha\otimes 1_{\CC_2}}$$
and 
$$a_{1\otimes (i,\alpha)}=A\boxtimes_{\CC_1\otimes \CC_2}  (1_{\CC_1}\otimes (i,\alpha )).$$
Since $A$ is a regular commutative algebra in $\CC_1\otimes\CC_2,$ we know that  $a_{\alpha\otimes \phi}, a_{1\otimes (i,\alpha)}\in (\CC_1\otimes\CC_2)_A.$

\begin{lem} \label{a} 
	Both $a_{\alpha\otimes 1}$ and $a_{1\otimes \phi}$ are  simple objects in $(\CC_1\otimes \CC_2)_A.$ That is, $\<a_{\alpha\otimes 1},a_{\alpha\otimes 1}\>=\<a_{1\otimes \phi},a_{1\otimes \phi}\>=1$.
\end{lem}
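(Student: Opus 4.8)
The plan is to compute the self-Hom space $\langle a_{\alpha\otimes 1}, a_{\alpha\otimes 1}\rangle$ via Frobenius reciprocity for the free-module (induction) functor, reducing it to a Hom computation in the ambient braided category $\CC_1\otimes\CC_2$. Recall that $a_{\alpha\otimes 1}=A\boxtimes_{\CC_1\otimes\CC_2}(\alpha\otimes 1_{\CC_2})$ is precisely the induced (free) $A$-module on the simple object $\alpha\otimes 1_{\CC_2}$, and induction $F(X)=A\boxtimes X$ is left adjoint to the forgetful functor $G\colon (\CC_1\otimes\CC_2)_A\to\CC_1\otimes\CC_2$. Hence I would first write
$$
\langle a_{\alpha\otimes 1}, a_{\alpha\otimes 1}\rangle
=\Hom_{(\CC_1\otimes\CC_2)_A}\bigl(F(\alpha\otimes 1_{\CC_2}),\,F(\alpha\otimes 1_{\CC_2})\bigr)
=\Hom_{\CC_1\otimes\CC_2}\bigl(\alpha\otimes 1_{\CC_2},\,G F(\alpha\otimes 1_{\CC_2})\bigr),
$$
and the same for $a_{1\otimes\phi}$ with $\alpha\otimes 1_{\CC_2}$ replaced by $1_{\CC_1}\otimes\phi$. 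Since $G F(X)=A\boxtimes X$ as an object of $\CC_1\otimes\CC_2$, this turns the problem into computing the multiplicity of $\alpha\otimes 1_{\CC_2}$ inside $A\boxtimes(\alpha\otimes 1_{\CC_2})$.

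Next I would decompose $A$ itself in $\CC_1\otimes\CC_2$. By the hypotheses in assumption (2) together with the decomposition $A=M^1\cong\bigoplus_{\beta\in J}W^\beta\otimes M^{(1,\beta)}$, the multiplicity of the unit $1_{\CC_2}$ inside $M^{(1,\beta)}$ is $\langle 1_{\CC_2}, M^{(1,\beta)}\rangle=\delta_{1,\beta}$, and $M^{(1,1)}=1_{\CC_2}$; so the ``$1_{\CC_2}$-isotypic part'' of $A$ on the $\CC_2$ side is exactly $1_{\CC_1}\otimes 1_{\CC_2}$. The point is that, after tensoring $A$ with $\alpha\otimes 1_{\CC_2}$ and projecting to find copies of $\alpha\otimes 1_{\CC_2}$, only the summand $W^1\otimes 1_{\CC_2}=1_{\CC_1}\otimes 1_{\CC_2}$ of $A$ can contribute: a summand $W^\beta\otimes M^{(1,\beta)}$ contributes $\langle \alpha, W^\beta\boxtimes\alpha\rangle_{\CC_1}\cdot\langle 1_{\CC_2}, M^{(1,\beta)}\boxtimes 1_{\CC_2}\rangle_{\CC_2}$, and the second factor is $\langle 1_{\CC_2}, M^{(1,\beta)}\rangle=\delta_{1,\beta}$. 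With $\beta=1$ the first factor is $\langle\alpha,\alpha\rangle_{\CC_1}=1$ since $\alpha$ is simple. Summing over $\beta$ gives exactly $1$. The computation for $a_{1\otimes\phi}$ is symmetric, using that the ``$1_{\CC_1}$-isotypic part'' of $A$ on the $\CC_1$ side is $1_{\CC_1}\otimes 1_{\CC_2}$, which follows from $M^{(1,1)}=1_{\CC_2}$ and the fact that $\langle 1_{\CC_1}, W^\beta\rangle=\delta_{1,\beta}$ in $\CC_1$.

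I expect the main obstacle to be making precise the claim that \emph{only} the trivial summand of $A$ contributes, i.e.\ controlling the cross terms in the tensor decomposition. Concretely I must justify that
$$
\langle \alpha\otimes 1_{\CC_2},\, A\boxtimes(\alpha\otimes 1_{\CC_2})\rangle
=\sum_{\beta\in J}\langle \alpha,\, W^\beta\boxtimes\alpha\rangle_{\CC_1}\,\langle 1_{\CC_2},\, M^{(1,\beta)}\rangle_{\CC_2},
$$
which uses that $\Hom$ in the Deligne product $\CC_1\otimes\CC_2$ factors as the tensor product of the Hom spaces in each factor, and that $\boxtimes$ on $\CC_1\otimes\CC_2$ is the componentwise tensor product. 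The subtle point is that $\langle\alpha, W^\beta\boxtimes\alpha\rangle_{\CC_1}$ need \emph{not} vanish for general $\beta\ne 1$ (fusion can produce $\alpha$ back), so the vanishing of the cross terms must come entirely from the $\CC_2$-side factor $\langle 1_{\CC_2}, M^{(1,\beta)}\rangle=\delta_{1,\beta}$ supplied by assumption (2). Once this factorization is in hand the sum collapses to the single term $\beta=1$, giving $1$, and simplicity of $a_{\alpha\otimes 1}$ follows because a nonzero object of a fusion category with one-dimensional endomorphism algebra is simple. The argument for $a_{1\otimes\phi}$ is entirely parallel.
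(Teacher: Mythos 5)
Your proposal is correct and follows essentially the same route as the paper: Frobenius reciprocity for the induction functor, the decomposition $A\boxtimes(\alpha\otimes 1_{\CC_2})=\bigoplus_{\beta}(W^\beta\boxtimes\alpha)\otimes M^{(1,\beta)}$, and assumption (2) to kill all terms with $\beta\neq 1$. The only difference is that you spell out explicitly why the cross terms vanish (the $\CC_2$-factor $\langle 1_{\CC_2},M^{(1,\beta)}\rangle=\delta_{1,\beta}$, not the $\CC_1$-factor), a point the paper leaves implicit in ``Using the assumption shows that.''
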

\begin{proof} Since $(\CC_1\otimes \CC_2)_A$ is a fusion category we have
	$$\Hom_A(a_{\alpha\otimes 1},a_{\alpha\otimes 1})=\Hom_{\CC_1\otimes \CC_2}(\alpha\otimes 1,a_{\alpha\otimes 1}).$$
	Note that 
	\begin{eqnarray*}
		a_{\alpha\otimes 1} &=&A\boxtimes_{\CC_1\otimes \CC_2}  (\alpha\otimes 1)\\
		&=&(\oplus_{\beta\in J_1}(\beta \otimes {(1,\beta)})\boxtimes_{\CC_1\otimes
			\CC_2}({\alpha}\otimes 1)\\
		& =&\oplus_{\beta\in J_1} ({\beta}\boxtimes {\alpha})\otimes {(1,\beta)}\\
		& =&\alpha\otimes 1 \oplus (\oplus_{\beta \neq 1}({\beta}\boxtimes {\alpha})\otimes (1,\beta)).
	\end{eqnarray*}
	Using the assumption  shows that  
	$$1=\dim \Hom_{\CC_1\otimes \CC_2}({\alpha}\otimes 1,a_{\alpha\otimes 1})=\dim \Hom_A(a_{\alpha\otimes 1},a_{\alpha\otimes 1}),$$
	as expected. Similarly for $a_{1\otimes \phi}.$
\end{proof}	

Let $\FF^1$ be the fusion subcategory of $\CC_1$ generated by $\alpha\in J_1$ and $\FF^2$ be the fusion subcategory of $\CC_2$ generated by the simple objects appearing in $(1,\alpha).$ 
The following Theorem was essentially obtained in \cite{Lin} with a similar proof.
\begin{thm}\label{lin} Let $\alpha, \beta\in J_1.$ Then
	
	(1)  $(1,\alpha)$ is simple,
	
	(2)  $(1,\alpha)$ and $(1.\beta)$ are isomorphic  iff $\alpha=\beta,$

 (3) $\dim \alpha=\dim (1,\alpha),$
	
(4) If $\alpha\boxtimes \beta =\sum_{\gamma\in J}N_{\alpha \beta}^{\gamma} {\gamma}$
	and  $N_{\alpha \beta}^{\gamma}\neq 0$, then $\gamma \in J_1$.
	That is, $\Or(\FF^1)=\{W^{\alpha}\mid \alpha \in  J_1\},$ 
	
	(5) There is a braided equivalent functor $F:\FF^1\to \overline{\FF^2}$ such that
	$F(W^{\alpha})$ is isomorphic to $(M^{(1,\alpha)})'$ where 
 $\overline{\FF^c}$ is the reverse category of $\FF^c$ and $(M^{(1,\alpha)})'$ is the dual of $M^{(1,\alpha)}.$ In particular, $(1,\alpha)'\boxtimes (1,\beta)'=\sum_{\gamma\in J_1}N_{\alpha \beta}^{\gamma}(1,\gamma)'.$
\end{thm}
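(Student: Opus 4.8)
The theorem collects five statements about the multiplicity spaces $(1,\alpha)$ for $\alpha \in J_1$, where $M^1 = A = \bigoplus_{\beta \in J_1} W^\beta \otimes (1,\beta)$ is the decomposition of the algebra object into $\CC_1 \otimes \CC_2$-simples. Let me think through what structure I have available and plan the argument.

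The key observation is that $A$ is a regular commutative algebra, so the Frobenius-Perron/categorical dimensions are multiplicative under the functor $X \mapsto A \boxtimes_{\CC_1\otimes\CC_2} X$, and crucially I have the dimension formula $\FPdim((\CC_1\otimes\CC_2)_A) = \FPdim(\CC_1\otimes\CC_2)/\FPdim(A)$ from the excerpt. Since $\CC_1, \CC_2$ are pseudo unitary, categorical dimensions equal FP-dimensions, so all dimensions are positive reals and I can argue with them directly.

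\textbf{Plan.} The engine of the whole statement is the decomposition
\[
A \;=\; \bigoplus_{\beta\in J_1} W^\beta\otimes (1,\beta)
\]
of the regular commutative algebra $A$, which is connected (since $\dim\Hom(\1,A)=1$) and hence special symmetric Frobenius and self-dual, $A\cong A'$. I will deduce all five parts from a single braided tensor functor $F\colon \FF^1\to\overline{\FF^2}$. First I would record the translation of Hom-spaces already implicit in Lemma~\ref{a}: for any simple $N^\phi\in\CC_2$, since the $W^\gamma$ are distinct simples of $\CC_1$ and only $\gamma=\alpha$ contributes,
\[
\dim\Hom_{\CC_2}(N^\phi,(1,\alpha)) \;=\; \dim\Hom_{\CC_1\otimes\CC_2}(W^\alpha\otimes N^\phi, A).
\]
Applying $A\cong A'$ to this identity gives $(1,\alpha)'\cong(1,\alpha^*)$, where $W^{\alpha^*}=(W^\alpha)'$, so in particular $J_1$ is closed under duals; this is the warm-up that fixes the target category $\overline{\FF^2}$.

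\textbf{Construction of the functor.} On generating simples I would set $F(W^\alpha)=(1,\alpha)'$. The multiplication $\mu\colon A\boxtimes A\to A$, projected to the $(W^\gamma\otimes(1,\gamma))$-components, produces morphisms $(1,\alpha)\boxtimes(1,\beta)\to (1,\gamma)$ indexed by the fusion channels $N_{\alpha\beta}^\gamma\neq 0$; after dualizing, these assemble into the tensor constraint $J_{\alpha,\beta}\colon F(W^\alpha)\boxtimes F(W^\beta)\to F(W^\alpha\boxtimes W^\beta)$. Associativity of $\mu$ (together with $\eta$ for the unit) yields the coherence making $F$ a tensor functor, and the commutativity axiom $\mu=\mu\circ c_{A,A}$ is precisely what forces $F$ to intertwine the braiding of $\FF^1$ with the \emph{reverse} braiding on the $\CC_2$-side, which is why the target is $\overline{\FF^2}$ rather than $\FF^2$.

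\textbf{Equivalence and payoff.} The remaining work is to show $F$ is fully faithful. Here assumption (2), namely $\dim\Hom_{\CC_2}(1_{\CC_2},(1,\alpha))=\delta_{1,\alpha}$, says the monoidal unit is hit only through $W^1$; combined with connectedness and the nondegeneracy of the special-Frobenius pairing on $A$, this forces each $J_{\alpha,\beta}$ to be an isomorphism and $F$ to be injective on isomorphism classes of simples. A faithful braided tensor functor out of a fusion category preserves simplicity and dimensions, so I immediately obtain (1) that $(1,\alpha)=F(W^\alpha)'$ is simple, (3) that $\dim(1,\alpha)=\dim F(W^\alpha)=\dim W^\alpha$, and (2) that distinct $\alpha$ give non-isomorphic $(1,\alpha)$. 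Since the image of a tensor functor is a fusion subcategory, the simples of $\FF^1$ are exactly $\{W^\alpha\mid\alpha\in J_1\}$, giving (4); and (5) is then the assertion that $F$ is a braided equivalence onto $\overline{\FF^2}$, whose monoidality reads $(1,\alpha)'\boxtimes(1,\beta)'=\sum_{\gamma\in J_1}N_{\alpha\beta}^\gamma(1,\gamma)'$.

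\textbf{Main obstacle.} The crux is the middle step: constructing the constraints $J_{\alpha,\beta}$ from $\mu$ and proving they are isomorphisms with $F$ faithful. This is exactly where connectedness, assumption (2), and Frobenius nondegeneracy must be used together, and it is the mirror-extension core of \cite{Lin}. The cleanest route I would take is to repackage $\bigoplus_{\beta\in J_1}(1,\beta)$ as an $\FF^1$-graded commutative algebra in $\CC_2$ whose degree-$1$ component is $1_{\CC_2}$, and argue that such a connected graded algebra has each homogeneous component simple; alternatively one can run the argument through the induced simple modules $a_{\alpha\otimes 1}$ of Lemma~\ref{a}, whose fusion already realizes $\FF^1$ inside $(\CC_1\otimes\CC_2)_A$, and transport the conclusion to $\CC_2$.
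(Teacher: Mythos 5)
The first thing to say is that the paper does not actually prove Theorem \ref{lin}: it states that the result ``was essentially obtained in \cite{Lin} with a similar proof'' and moves on. Your outline is indeed the mirror-extension argument of \cite{Lin} --- build a braided tensor functor $F:\FF^1\to\overline{\FF^2}$, $W^\alpha\mapsto (1,\alpha)'$, out of the multiplication of $A$, with the commutativity axiom accounting for the reversed braiding --- so you are pointing at the same proof the authors are pointing at. The difficulty is that, as a standalone proof, your proposal has a genuine gap exactly where you flag it: you never establish that the structure maps $J_{\alpha,\beta}$ are isomorphisms, that $F$ is fully faithful, or (equivalently, and more to the point) that each $(1,\alpha)$ is simple and that distinct $\alpha\in J_1$ give non-isomorphic $(1,\alpha)$. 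Every one of the five conclusions is downstream of that step, and ``connectedness, assumption (2), and Frobenius nondegeneracy must be used together'' is a description of where the work lives, not the work itself. Two smaller points: ``a faithful braided tensor functor preserves simplicity'' should be ``fully faithful'' (faithfulness alone does not control $\End(F(X))$), and your derivation of (4) from ``the image of a tensor functor is a fusion subcategory'' concerns the image in $\overline{\FF^2}$, whereas (4) is a statement about the source, namely that $\{W^\alpha\mid\alpha\in J_1\}$ is already closed under fusion; that needs a separate argument.

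The route you mention only in passing at the end --- working with the induced modules $a_{\alpha\otimes 1}$ --- is actually the one that closes the gap with tools already in the paper. For $\alpha\in J_1$, the Frobenius-reciprocity computations of Lemma \ref{kwl2} (applied with $i=1$, so $M^1\boxtimes_A a_{\alpha'\otimes 1}=a_{\alpha'\otimes 1}$) show that $\dim\Hom_A(a_{1\otimes(1,\alpha)},a_{1\otimes(1,\alpha)})=\dim\Hom_{\CC_2}((1,\alpha),(1,\alpha))=:n$ and that the simple object $a_{\alpha'\otimes 1}$ of Lemma \ref{a} occurs in $a_{1\otimes(1,\alpha)}$ with multiplicity $n$; writing $a_{1\otimes(1,\alpha)}$ as a sum of simples with multiplicities $c_k$ gives $n^2\le\sum_k c_k^2=n$, hence $n=1$ and $a_{1\otimes(1,\alpha)}\cong a_{\alpha'\otimes 1}$. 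This yields (1) at once, (3) by comparing Frobenius--Perron dimensions of the two sides (pseudo-unitarity converts these to categorical dimensions), (2) from $\dim\Hom_A(a_{\alpha'\otimes 1},a_{\beta'\otimes 1})=\delta_{\alpha\beta}$ as in Lemma \ref{kwl1}(3), and (4) from $a_{\alpha\otimes 1}\boxtimes_A a_{\beta\otimes 1}=\sum_\gamma N_{\alpha\beta}^\gamma a_{\gamma\otimes 1}$ together with the observation that $a_{\gamma\otimes 1}$ can occur in $a_{1\otimes N^\phi}$ only when $\<1_{\CC_2},(1,\gamma)\boxtimes N^\phi\>\ne 0$, i.e.\ $\gamma\in J_1$. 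Only (5), the braided equivalence itself, genuinely requires the functorial construction you sketch, and there the coherence and reverse-braiding verifications are precisely the content of \cite{Lin} that neither your proposal nor the paper reproduces.
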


\begin{lem}\label{fusion}
$\FF_i$ is an indecomposable left module category over $\FF^1$-module. In particular, $W^\beta\boxtimes W^{\alpha}=\sum_{\gamma\in J_i}N_{\beta\alpha}^{\gamma}W^{\gamma}$ for all $\beta\in J_1$ and $\alpha\in J_i.$
\end{lem}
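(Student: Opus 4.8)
The plan is to regard $\FF_i$ as the additive subcategory of $\CC_1$ spanned by $\{W^\alpha\mid\alpha\in J_i\}$ and to establish the statement in three moves: first that $J_i$ is contained in a single orbit of the $\FF^1$-action, then that it fills out the whole orbit (this is the displayed fusion formula, i.e. closure under the $\FF^1$-action, which makes $\FF_i$ a left $\FF^1$-module category), and finally that this module category is indecomposable. The driving fact throughout is that $M^i$ is a \emph{simple} object of $\CC=(\CC_1\otimes\CC_2)^0_A$, hence a simple $A$-module, together with the explicit shape $A\cong\bigoplus_{\beta\in J_1}W^\beta\otimes(1,\beta)$ supplied by $J_1$ and Lemma \ref{a}.

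First I would record the containment $J_i\subseteq[\alpha]$, where $[\alpha]$ is the $\FF^1$-orbit of a fixed $\alpha\in J_i$. Since $M^i$ is simple, the subobject $W^\alpha\otimes M^{(i,\alpha)}$ generates $M^i$ as an $A$-module, so the action $A\boxtimes_{\CC_1\otimes\CC_2}(W^\alpha\otimes M^{(i,\alpha)})\to M^i$ is surjective. Expanding the source via $A\cong\bigoplus_{\beta\in J_1}W^\beta\otimes(1,\beta)$ shows that its $\CC_1$-content is $\{\gamma\mid N_{\beta\alpha}^\gamma\neq0\text{ for some }\beta\in J_1\}=[\alpha]$, so surjectivity forces every $\gamma\in J_i$ into $[\alpha]$. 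Running this over all $\alpha\in J_i$ places all of $J_i$ in one orbit. Because $\FF^1$ is closed under fusion and duals (Theorem \ref{lin}(4) and rigidity), each orbit is automatically stable under the $\FF^1$-action, so $[\alpha]$ is a genuine indecomposable $\FF^1$-module subcategory of $\CC_1$.

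The remaining — and main — point is the reverse inclusion $[\alpha]\subseteq J_i$, equivalently that the fusion coefficients in $W^\beta\boxtimes W^\alpha$ land inside $J_i$; this is the field-identification content of the lemma. The surjection argument only bounds $J_i$ from above, so additional input is needed, and I would approach it through Frobenius reciprocity combined with locality. Concretely, $\alpha\in J_i$ means $M^i$ is a direct summand of the free module $a_{\alpha\otimes\phi}$ for some $\phi$, whose $\CC_1$-content is the entire orbit $[\alpha]$; the task is to show that locality of $M^i$, together with the standing assumptions $M^{(1,1)}=1_{\CC_2}$ and $\<1_{\CC_2},M^{(1,\alpha)}\>=\delta_{1,\alpha}$, forces each $\CC_1$-constituent of $a_{\alpha\otimes\phi}$ to already occur in the summand $M^i$, giving $J_i=[\alpha]$ and hence $W^\beta\boxtimes W^\alpha=\sum_{\gamma\in J_i}N_{\beta\alpha}^\gamma W^\gamma$. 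I expect this closure step to be the genuine obstacle: it is precisely the phenomenon that breaks down in the counterexamples to the Kac--Wakimoto hypothesis, where $U^{cc}\supsetneq U$, so the hypothesis $U^{cc}=U$ (equivalent to the displayed conditions on $M^{(1,\alpha)}$) must be used decisively at exactly this point.

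Finally, indecomposability comes for free once closure is in hand, since $\FF_i=[\alpha]$ is then a single $\FF^1$-orbit and a transitive module category is indecomposable. Alternatively one can argue directly from simplicity: any splitting of $\FF_i$ into nonzero $\FF^1$-module subcategories would partition $J_i=J_i'\sqcup J_i''$ into $\FF^1$-stable pieces, and grouping the constituents $W^\alpha\otimes M^{(i,\alpha)}$ accordingly would — by the closure just established — exhibit $M^i$ as a direct sum of two nonzero $A$-submodules, contradicting its simplicity.
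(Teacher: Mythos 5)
The paper states this lemma without any proof, so there is nothing to compare against on that side; judging your proposal on its own terms, it has a genuine gap, and it is exactly where you say it is. Your first step ($J_i\subseteq[\alpha]$, the $\FF^1$-orbit of $\alpha$) is correct and cleanly argued from simplicity of $M^i$ and the shape of $A$. But the actual content of the lemma is the reverse inclusion $[\alpha]\subseteq J_i$ --- that is precisely the displayed fusion formula --- and for that step you only describe what would need to be shown, explicitly flagging it as ``the genuine obstacle'' without supplying an argument. A proof sketch that stops at the statement of its hardest step is not a proof, so as written the proposal does not establish the lemma; the indecomposability conclusion, which you correctly note follows from closure plus simplicity, is therefore also left hanging.

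The gap can be closed, and not quite by the route you gesture at (locality of $M^i$ is not the operative ingredient). The missing argument is: every simple summand $N$ of the induced module $a_{\alpha\otimes \phi}$ satisfies $\Hom_A(a_{\alpha\otimes\phi},N)\cong \Hom_{\CC_1\otimes\CC_2}(W^\alpha\otimes N^\phi,N)\neq 0$ by Frobenius reciprocity, hence $W^\alpha$ occurs in the $\CC_1$-content of \emph{every} such summand. Combining this with the dichotomy that two simple $A$-modules in $(\CC_1\otimes\CC_2)_A$ have either identical or disjoint $\CC_1$-content (the paper's Corollary \ref{key}, whose proof via Proposition \ref{p3.6} does not rely on the present lemma), all simple summands of $a_{\alpha\otimes\phi}$ must share one and the same $\CC_1$-content, which is then forced to be the full $\CC_1$-content of $a_{\alpha\otimes\phi}$, namely the orbit $[\alpha]$. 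Taking $\phi$ to be a constituent of $(i,\alpha)$ so that $M^i$ is among these summands gives $J_i=[\alpha]$ and hence the fusion formula. Note this works for arbitrary simple $A$-modules, local or not; the standing hypotheses $M^{(1,1)}=1_{\CC_2}$ and $\<1_{\CC_2},M^{(1,\alpha)}\>=\delta_{1,\alpha}$ enter only through Lemma \ref{a} and Proposition \ref{p3.6}, not through a separate appeal to locality. If you want a self-contained write-up you would either import Corollary \ref{key} (checking there is no circularity, which there is not) or reprove the disjoint-or-equal dichotomy directly.
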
 
We now define the Kac-Wakimoto set
$$\KW=\{\alpha\in J|a_{\alpha\otimes 1}\in \CC\}.$$
The importance of $\KW$-set was first noticed in \cite{KW} when they studied the coset vertex operator superalgebras associated with affine vertex operator superalgebras. It turns out that the KW-set plays an essential role in understanding how to decompose  $M^{(j,\beta)}$ into 
simple objects in $\CC_2.$

\begin{lem}\label{kwl1} Assume that $\alpha\in \KW.$ Then 

	(1) $\alpha'\in \KW,$
	
	(2)  $\<1_{\CC_2},(i,\alpha)\>=1$
where $M^i=a_{\alpha\otimes 1},$

(3) if $\beta\in \KW$ such that $a_{\alpha\otimes 1}=a_{\beta\otimes 1} ,$  then $\alpha=\beta.$ 
\end{lem}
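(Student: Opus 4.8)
# Proof Proposal for Lemma \ref{kwl1}

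The plan is to exploit the algebra/module structure in $(\CC_1\otimes\CC_2)_A$ together with the explicit decomposition of $a_{\alpha\otimes 1}$ computed in the proof of Lemma \ref{a}. Throughout I would use that $\alpha\in\KW$ means precisely $a_{\alpha\otimes 1}\in\CC=(\CC_1\otimes\CC_2)_A^0$, i.e. $a_{\alpha\otimes 1}$ is a \emph{local} $A$-module, and that local modules are exactly the simple objects $M^i$ of $\CC$.

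For part (1), I would argue via duality. The dual of an $A$-module is again an $A$-module, and taking duals sends local modules to local modules (this is standard, following from the compatibility of the duality functor with the braiding conditions $\mu_N\circ c_{N,A}\circ c_{A,N}=\mu_N$ defining locality). Since $A$ is self-dual as a commutative algebra and $A\boxtimes_{\CC_1\otimes\CC_2}(\alpha\otimes 1)$ has dual $A\boxtimes_{\CC_1\otimes\CC_2}(\alpha'\otimes 1)=a_{\alpha'\otimes 1}$ — using that $(\alpha\otimes 1)'=\alpha'\otimes 1$ in $\CC_1\otimes\CC_2$ — locality of $a_{\alpha\otimes 1}$ forces locality of $a_{\alpha'\otimes 1}$, giving $\alpha'\in\KW$.

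For part (2), I would set $M^i=a_{\alpha\otimes 1}$ (legitimate by hypothesis, since $\alpha\in\KW$ makes this a simple object of $\CC$) and compute $\langle 1_{\CC_2},(i,\alpha)\rangle=\dim\Hom_{\CC_2}(1_{\CC_2},M^{(i,\alpha)})$ directly from the decomposition in Lemma \ref{a}. There we found
\begin{equation*}
a_{\alpha\otimes 1}=\alpha\otimes 1\;\oplus\;\Big(\bigoplus_{\beta\neq 1}(\beta\boxtimes\alpha)\otimes(1,\beta)\Big),
\end{equation*}
and matching this against $M^i=\bigoplus_{\gamma\in J}\gamma\otimes M^{(i,\gamma)}$ in the $\alpha$-component isolates the contribution of $\gamma=\alpha$. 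The key point is that the unit object $1_{\CC_2}=N^1$ appears in $M^{(i,\alpha)}$ with multiplicity equal to the multiplicity of $W^\alpha\otimes 1_{\CC_2}$ in $a_{\alpha\otimes 1}$, and Lemma \ref{a} shows $a_{\alpha\otimes 1}$ is simple with $W^\alpha\otimes 1_{\CC_2}$ occurring exactly once (the leading summand $\alpha\otimes 1$, noting that the remaining summands carry $(1,\beta)$ with $\beta\neq 1$, none of which equal $1_{\CC_2}$ by Theorem \ref{lin}(1)--(2)). This yields $\langle 1_{\CC_2},(i,\alpha)\rangle=1$.

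For part (3), suppose $\alpha,\beta\in\KW$ with $a_{\alpha\otimes 1}=a_{\beta\otimes 1}$ as objects of $\CC$. Applying the functor $A\boxtimes_{\CC_1\otimes\CC_2}(-)$ is left adjoint to the forgetful functor, so $\Hom_A(a_{\alpha\otimes 1},a_{\beta\otimes 1})\cong\Hom_{\CC_1\otimes\CC_2}(\alpha\otimes 1,a_{\beta\otimes 1})$. Using the Lemma \ref{a} decomposition of $a_{\beta\otimes 1}$, the simple object $\alpha\otimes 1$ of $\CC_1\otimes\CC_2$ appears in $a_{\beta\otimes 1}$ only if $\alpha\boxtimes\beta^{-1}$-type matching forces $\alpha=\beta$: concretely, the summands of $a_{\beta\otimes 1}$ of the form $(\text{something})\otimes 1_{\CC_2}$ are exactly the leading term $\beta\otimes 1$, since every other summand carries a factor $(1,\delta)$ with $\delta\neq 1$, which is never isomorphic to $1_{\CC_2}$ by Theorem \ref{lin}. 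Hence $\langle\alpha\otimes 1,a_{\beta\otimes 1}\rangle\neq 0$ together with the equality $a_{\alpha\otimes 1}=a_{\beta\otimes 1}$ pins down $\alpha=\beta$.

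The main obstacle I anticipate is part (1): verifying cleanly that duality preserves locality and that the dual of the induced module $a_{\alpha\otimes 1}$ is the induced module $a_{\alpha'\otimes 1}$, rather than merely an induced module of some object isomorphic to $\alpha'\otimes 1$. This requires care with the rigid/braided structure and the compatibility of the induction functor with duals; the book-keeping in $\CC_1\otimes\CC_2$ (where $(\alpha\otimes 1)'=\alpha'\otimes 1_{\CC_2}$ since $1_{\CC_2}$ is self-dual) is the crux, and everything else reduces to reading off multiplicities from the already-established Lemma \ref{a} decomposition.
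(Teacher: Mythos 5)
Your proposal is correct and follows essentially the same route as the paper: part (1) via duality of induced local modules with $(1_{\CC_2})'=1_{\CC_2}$, and parts (2) and (3) via Frobenius reciprocity $\Hom_A(a_{\alpha\otimes 1},-)\cong\Hom_{\CC_1\otimes\CC_2}(\alpha\otimes 1,-)$ combined with the decomposition from Lemma \ref{a} and the standing assumption $\<1_{\CC_2},(1,\beta)\>=\delta_{1,\beta}$. The only cosmetic difference is that in (2) you read the multiplicity off the explicit decomposition directly, whereas the paper invokes the simplicity statement $\<a_{\alpha\otimes 1},a_{\alpha\otimes 1}\>_A=1$ of Lemma \ref{a}; these are the same computation.
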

\begin{proof}
	
	(1) Clearly,  $a_{\alpha'\otimes 1}=(a_{\alpha\otimes 1})'\in \CC$   by noting that $(1_{\CC_2})'=1_{\CC_2},$ 	the result follows.
	
	(2) 
Since  $a_{\alpha \otimes 1}$ is a  simple $A$-modules in $(\CC_1\otimes\CC_2)_A$   we have 
	$$1=\<a_{\alpha \otimes 1},a_{\alpha \otimes 1}\>_A=\dim \Hom_{\CC_1\otimes \CC_2}(\alpha \otimes 1,a_{\alpha \otimes 1})
	=\dim\Hom_{\CC_1\otimes \CC_2}(\alpha \otimes 1,\alpha\otimes(i,\alpha)).$$  
	This implies that $\<1_{\CC_2},(i,\alpha)\>=1.$

(3)  As $\Hom_{A}(a_{\alpha \otimes 1},a_{\beta\otimes 1})
		\cong\Hom_{\CC_1\otimes \CC_2}({\alpha}\otimes 1,a_{\beta\otimes 1})
		\cong\Hom_{\CC_1\otimes \CC_2}({\alpha}\otimes 1, \beta\otimes 1)$
	is one-dimensional,  we conclude that $\alpha=\beta$.
\end{proof}	


\begin{lem}\label{sigma}  Let $\alpha,\beta\in \KW.$
	
	(1) $\alpha\boxtimes_{\CC_1} \beta=\sum_{\gamma\in \KW} N_{\alpha \beta}^{\gamma} \gamma.$ 
	
	(2) $a_{\alpha\otimes 1}\boxtimes_A a_{\beta\otimes 1} =\sum_{\gamma\in\KW} N_{\alpha \beta}^{\gamma}a_{\gamma\otimes 1} $.
	
	(3) Let $\CC_1^{kw}$ be the braided fusion subcategory of $\CC_1$ generated by $\alpha\in \KW$ and  $\CC^{kw}$ be the braided fusion subcategory of $\CC$ generated by $a_{\alpha\otimes 1}$ for $\alpha\in \KW.$ Then $\Or(\CC_1^{kw})=\KW,$ 
	$\Or(\CC^{kw})=\{a_{\alpha\otimes 1}| \alpha\in \KW\},$ and $\CC_1^{kw}$ and $\CC^{kw}$ are braided equivalent.
	\end{lem}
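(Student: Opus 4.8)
The plan is to deduce all three parts from two structural facts: that the free module (induction) functor $F=A\boxtimes_{\CC_1\otimes\CC_2}(-)\colon \CC_1\otimes\CC_2\to(\CC_1\otimes\CC_2)_A$ is an additive tensor functor, and that the local module category $\CC=(\CC_1\otimes\CC_2)^0_A$ is closed under $\boxtimes_A$ and under direct summands inside $(\CC_1\otimes\CC_2)_A$.

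First I would compute the raw fusion in $(\CC_1\otimes\CC_2)_A$. Writing $\alpha\boxtimes_{\CC_1}\beta=\sum_{\gamma\in J}N_{\alpha\beta}^\gamma\gamma$ in $\CC_1$, the tensor structure of the Deligne product gives $(\alpha\otimes 1)\boxtimes(\beta\otimes 1)=\sum_{\gamma\in J}N^\gamma_{\alpha\beta}(\gamma\otimes 1)$ in $\CC_1\otimes\CC_2$. Applying $F$ and using its monoidality $F(X)\boxtimes_A F(Y)\cong F(X\boxtimes Y)$ together with additivity yields
$$a_{\alpha\otimes 1}\boxtimes_A a_{\beta\otimes 1}=\sum_{\gamma\in J}N^\gamma_{\alpha\beta}\,a_{\gamma\otimes 1}.$$
Since $\alpha,\beta\in\KW$, both $a_{\alpha\otimes 1}$ and $a_{\beta\otimes 1}$ lie in $\CC$, so their tensor product is again a local $A$-module. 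Each $a_{\gamma\otimes 1}$ is simple in $(\CC_1\otimes\CC_2)_A$ by Lemma \ref{a}, and because local modules form a full subcategory closed under direct summands, every $a_{\gamma\otimes 1}$ with $N^\gamma_{\alpha\beta}\neq 0$ must itself be local, i.e. $\gamma\in\KW$. This simultaneously proves (2), by restricting the sum to $\gamma\in\KW$, and (1), since the very same multiplicities $N^\gamma_{\alpha\beta}$ compute $\alpha\boxtimes_{\CC_1}\beta$ and only indices in $\KW$ occur.

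For (3) I would first record that $\KW$ is closed under fusion by part (1), closed under duals by Lemma \ref{kwl1}(1), and contains $1_{\CC_1}$; hence the braided fusion subcategory it generates satisfies $\Or(\CC_1^{kw})=\KW$. Likewise, by part (2), Lemma \ref{kwl1}(1) and $a_{1\otimes 1}=A=1_{\CC}$, the set $\{a_{\alpha\otimes 1}\mid\alpha\in\KW\}$ is closed under $\boxtimes_A$ and duals and contains the unit, so $\Or(\CC^{kw})=\{a_{\alpha\otimes 1}\mid\alpha\in\KW\}$. For the equivalence I would take the restriction of $F\circ(-\otimes 1_{\CC_2})$ to $\CC_1^{kw}$: this is a tensor functor whose image lands in the local modules $\CC^{kw}$, and I would invoke the standard fact (\cite{KO}, \cite{DMNO}) that the free module functor is braided on the full subcategory of objects with local image. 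Essential surjectivity is immediate from the description of $\Or(\CC^{kw})$, and fully faithfulness follows on simple objects from $\dim\Hom_{\CC_1}(W^\alpha,W^\beta)=\delta_{\alpha\beta}=\dim\Hom_A(a_{\alpha\otimes 1},a_{\beta\otimes 1})$, where the last equality uses Lemma \ref{a} for simplicity and Lemma \ref{kwl1}(3) for distinctness; semisimplicity then upgrades this to a braided equivalence $\CC_1^{kw}\simeq\CC^{kw}$.

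The step I expect to be the main obstacle is pinning down the braided structure of the functor in (3): the target $(\CC_1\otimes\CC_2)_A$ carries no braiding in general, so one must argue that on objects whose induced modules are local the free module functor does respect the braiding, and with the correct, non-reversed, orientation --- in contrast with the reverse category $\overline{\FF^2}$ appearing in Theorem \ref{lin}(5), which compares $\CC_1$ against $\CC_2$ rather than against modules induced from $\CC_1$. The closure of local modules under direct summands, used in the proof of (1), is routine but should be cited explicitly.
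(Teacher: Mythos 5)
Your proposal is correct and follows essentially the same route as the paper: both compute $a_{\alpha\otimes 1}\boxtimes_A a_{\beta\otimes 1}\cong a_{(\alpha\boxtimes\beta)\otimes 1}=\sum_{\gamma}N_{\alpha\beta}^{\gamma}a_{\gamma\otimes 1}$ via monoidality of induction, deduce (1) and (2) from closure of $\CC$ under $\boxtimes_A$ and direct summands, and obtain (3) from the fact that $X\mapsto A\boxtimes(X\otimes 1_{\CC_2})$ is a braided tensor functor on objects with local image (the paper cites \cite[Theorem 2.67]{CKM} for this). Your extra verification of full faithfulness and essential surjectivity just fills in what the paper dismisses as ``immediate'' from (1), (2), Lemma \ref{a} and Lemma \ref{kwl1}(3).
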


\begin{proof}
	Note that 
$$a_{\alpha\otimes 1} \boxtimes_A a_{\beta\otimes 1}
\cong a_{(\alpha\boxtimes \beta)\otimes 1}
=\sum_{\gamma\in J } N_{\alpha, \beta}^{\gamma} a_{\gamma \otimes 1}.$$
Since $a_{\alpha\otimes 1},  a_{\beta\otimes 1}\in \CC$ which is a modular tensor category, we see immediately that $a_{\gamma \otimes 1}\in \CC$ if  $N_{\alpha \beta}^\gamma\ne 0.$ This proves both (1) and (2).

For (3),  we set  ${\cal F}(X)=A\boxtimes (X\otimes 1_{\CC_2})$ for $X\in\CC_1.$ From \cite[Theorem 2.67]{CKM},  ${\cal F}: \CC^{kw}_1\to \CC^{kw}$
is a braided tensor  functor by noting  that $X\to X\otimes 1_{\CC_2}$ 
is a braided tensor functor.  The braided equivalence follows from (1) and (2)  immediately.
\end{proof}

 We now finger out which  $\alpha\in J$ lies in $\KW.$   Let ${\DD}=\{W^\beta|\beta\in J_1\}.$  The \emph{M\"uger centralizer} $C_{\CC_1}(\DD)$ is the  subcategory of $\CC_1$ consisting of the objects $Y$ in $\CC_1$ such that $c^1_{Y,X}\circ  c^1_{X,Y} = \id_{X\boxtimes Y}$  for all $X$ in $\DD$ where $c^1_{Y,X}:Y\boxtimes X\to X\boxtimes Y$ is the braiding isomorphism. Let $\theta^1, \theta^2,\theta$ be the ribbon structures on categories
$\CC_1,\CC_2,\CC.$ Then $\theta^1\otimes \theta^2$ is a ribbon structure on $\CC_1\otimes\CC_2.$

\begin{prop}\label{characwk} Assume that $(\theta^1\otimes\theta^2 )_A=\id.$  Let $\alpha\in J.$ Then $\alpha\in \KW$ if and only if $\alpha\in C_{\CC_1}(\DD).$ Equivalently, $a_{\alpha\otimes 1}\in \CC$  if and only if $W^\alpha\in C_{\CC_1}(\DD).$
\end{prop}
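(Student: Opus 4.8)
The plan is to characterize membership in $\KW$ via the locality condition that defines $\CC=(\CC_1\otimes\CC_2)_A^0$ inside $(\CC_1\otimes\CC_2)_A$. By Lemma \ref{a}, $a_{\alpha\otimes 1}$ is always a simple object of the fusion category $(\CC_1\otimes\CC_2)_A$, so the only question is whether it is \emph{local}, i.e.\ whether $\mu_{a_{\alpha\otimes 1}}\circ c_{a,A}\circ c_{A,a}=\mu_{a_{\alpha\otimes 1}}$. The strategy is to reduce locality of the induced module $a_{\alpha\otimes 1}=A\boxtimes_{\CC_1\otimes\CC_2}(\alpha\otimes 1_{\CC_2})$ to a condition on the monodromy (double braiding) between $\alpha\otimes 1_{\CC_2}$ and $A$ itself, which is the standard criterion for induced modules: the induced object $A\boxtimes X$ is local precisely when the double braiding $c_{A,X}\circ c_{X,A}$ acts trivially on $X$ relative to the constituents of $A$.

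First I would write $A\cong\bigoplus_{\beta\in J_1}W^\beta\otimes M^{(1,\beta)}$ as an object of $\CC_1\otimes\CC_2$, using the decomposition established in the proof of Lemma \ref{a}. The double braiding in the product category factors as $(c^1\circ c^1)\otimes(c^2\circ c^2)$ on each tensor factor. Since $X=\alpha\otimes 1_{\CC_2}$, the $\CC_2$-component of the monodromy between $X$ and any summand $W^\beta\otimes M^{(1,\beta)}$ is the double braiding of $1_{\CC_2}$ against $M^{(1,\beta)}$, which is the identity. Hence the monodromy of $X$ against $A$ is governed entirely by the $\CC_1$-double-braidings $c^1_{\alpha,\beta}\circ c^1_{\beta,\alpha}$ for $\beta\in J_1$. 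The locality criterion for the induced module then becomes: $a_{\alpha\otimes 1}\in\CC$ if and only if this $\CC_1$-monodromy against every $W^\beta$ with $\beta\in J_1$ is trivial, which is exactly the statement $W^\alpha\in C_{\CC_1}(\DD)$ where $\DD=\{W^\beta\mid\beta\in J_1\}$.

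The role of the hypothesis $(\theta^1\otimes\theta^2)_A=\id$ is to pin down the balancing/twist data so that the locality criterion can be phrased purely in terms of the double braiding rather than involving a twist discrepancy; concretely it forces the twist $\theta^1\otimes\theta^2$ to restrict to the identity on $A$, which is what makes $A$ a \emph{commutative} algebra in the ribbon sense and lets one invoke the clean equivalence ``induced module is local iff the constituent lies in the M\"uger centralizer of the support of $A$.'' I would cite the induction/locality machinery from \cite{KO} and the centralizer computation in the spirit of \cite{DMNO}.

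The main obstacle I anticipate is making the reduction from the monodromy of $a_{\alpha\otimes 1}$ (an object built by relative tensor product over $A$) to the monodromy of the plain object $\alpha\otimes 1_{\CC_2}$ against $A$ fully rigorous: one must verify that locality of the induced $A$-module $A\boxtimes_{\CC_1\otimes\CC_2}(X)$ is detected on the image of the structure morphism and is equivalent to the vanishing of the relevant monodromy on $X$, and that the assumption $(\theta^1\otimes\theta^2)_A=\id$ is precisely what removes the twist correction term in that equivalence. Once this induction-locality correspondence is in place, the identification of the trivial-monodromy condition with membership in $C_{\CC_1}(\DD)$ is a direct unwinding of the definition of the M\"uger centralizer applied to $\DD=\{W^\beta\mid\beta\in J_1\}$.
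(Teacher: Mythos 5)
Your argument is correct, but it runs along a genuinely different track from the paper's. The paper invokes \cite[Theorem 3.3]{KO}: since $a_{\alpha\otimes 1}$ is a \emph{simple} object of $(\CC_1\otimes\CC_2)_A$ (Lemma \ref{a}), it is local iff the twist $(\theta^1\otimes\theta^2)_{a_{\alpha\otimes 1}}$ is a scalar; the paper then expands $a_{\alpha\otimes 1}=\oplus_{\beta\in J_1}(\beta\boxtimes\alpha)\otimes(1,\beta)$ and uses the balancing axiom $\theta^1_{\beta\boxtimes\alpha}=\theta^1_\beta\theta^1_\alpha\,c^1_{\alpha,\beta}\circ c^1_{\beta,\alpha}$ together with $(\theta^1\otimes\theta^2)_A=\id$ (which gives $\theta^1_\beta\theta^2_{(1,\beta)}=1$) to convert the scalar-twist condition into the vanishing of the $\CC_1$-monodromies. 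You instead go straight at the defining locality equation for the induced module, via the standard criterion that $A\boxtimes_{\CC_1\otimes\CC_2}X$ is local iff the monodromy $c_{X,A}\circ c_{A,X}$ is the identity (this is in \cite{CKM} and follows from commutativity of $A$ plus the hexagon axioms), and then observe that since the $\CC_2$-component of $X=\alpha\otimes 1_{\CC_2}$ is the unit, the monodromy is block-diagonal with blocks $(c^1_{\alpha,\beta}\circ c^1_{\beta,\alpha})\otimes\id_{(1,\beta)}$, $\beta\in J_1$. Both routes end at the same place; yours avoids the simplicity of $a_{\alpha\otimes 1}$ and avoids twists altogether, while the paper's leans on the ribbon structure. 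One correction to your commentary: the hypothesis $(\theta^1\otimes\theta^2)_A=\id$ is \emph{not} what makes your monodromy criterion clean --- that criterion holds for any commutative algebra with no twist hypothesis --- so in your route the assumption is actually superfluous; it is essential only in the paper's route, where it enters through the applicability of \cite[Theorem 3.3]{KO} and through the cancellation $\theta^1_\beta\theta^2_{(1,\beta)}=1$. This is not a gap in your proof, but you should either drop the hypothesis from your statement of what you use or be precise that you are not using it.
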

\begin{proof}  Clearly, $a_{\alpha\otimes 1}$ is a simple object in $(\CC_1\otimes \CC_2)_A.$ By \cite[Theorem 3.3]{KO}, $a_{\alpha\otimes 1}\in \CC$ if and only if $(\theta^1
\otimes \theta^2)_{a_{\alpha\otimes 1}}$ is a constant. From the definition, 
$$a_{\alpha\otimes 1}=\oplus_{\beta\in J_1}(\beta\boxtimes \alpha)\otimes (1,\beta).$$
Since $(\theta^1\otimes \theta^2)_{{\alpha}\otimes 1}=\theta^1_{\alpha}$ where  $\theta^1_{\alpha}=\theta^1_{W^{\alpha}},$  $a_{\alpha\otimes 1}\in \CC$ if and only if
$$(\theta^1
\otimes \theta^2)_{({\beta}\boxtimes {\alpha})\otimes (1,\beta)}=
\theta^1_{\beta\boxtimes \alpha}\otimes \theta^2_{(1,\beta)}=\theta^1_{\alpha}\id_{({\beta}\boxtimes {\alpha})\otimes (1,\beta)}$$
 where we have identified $\theta^1_{\alpha}$ with a complex number as $\theta^1_{\alpha}$ acts on $W^{\alpha}$ is a constant. 
On the other hand, 
$$\theta^1_{{\beta}\boxtimes {\alpha}}=c_{W^{\alpha},W^{\beta}}^1\circ c_{W^{\beta},W^{\alpha}}^1\circ \theta^1_{\beta}\boxtimes \theta^1_{\alpha}= \theta^1_{\beta}\theta^1_{\alpha}c_{W^{\alpha},W^{\beta}}^1\circ c_{W^{\beta},W^{\alpha}}^1$$
where we again regard  $\theta^1_{\beta},\theta^1_{\alpha}$ as constant. From the assumption that $(\theta^1\otimes\theta^2 )_A=\id$ we know that $(\theta^1\otimes \theta^2)_{{\beta}\otimes (1,\beta)}=\theta^1_{\beta}\theta^2_{(1,\beta)}=1.$
Thus 
$$(\theta^1
\otimes \theta^2)_{({\beta}\boxtimes {\alpha})\otimes (1,\beta)}=\theta^1_{\alpha}
(c_{W^{\alpha},W^{\beta}}^1\circ c_{W^{\beta},W^{\alpha}}^1)\otimes \id_{(1,\beta)}=\theta^1_{\alpha}\id_{({\beta}\boxtimes {\alpha})\otimes (1,\beta)}$$
if and only if $c_{W^{\alpha},W^{\beta}}^1\circ c_{W^{\beta},W^{\alpha}}^1=\id_{\beta\boxtimes \alpha}.$  The proof is complete.
\end{proof}

We remark that in the setting of vertex operator algebra, the commutative algebra $A$ in a module category of a regular vertex operator algebra $V$ is also a vertex operator algebra 
\cite{HKL}. The assumption $\theta_A=1$ in Proposition \ref{characwk}   is always true.

We next deal with $(i,\alpha)$ for $\alpha\in J_i.$ For short, we set $d_X=\dim X$ for $X\in {\DD}$ where $\DD$ is any fusion category. We  also set $d_i=d_{M^i}$ and $d_{\alpha}=d_{W^\alpha}$ for $i\in I, \alpha\in J.$
\begin{lem}\label{kwl2} 
The $a_{1\otimes (i,\alpha)}$ is a summand of $M^i\boxtimes_A a_{\alpha'\otimes 1}$ in
	$(\CC_1\otimes \CC_2)_A.$  Moreover, $a_{1\otimes (i,\alpha)}=M^i\boxtimes_A a_{\alpha'\otimes 1}$ if $d_{(i,\alpha)}=d_id_\alpha.$ 
	
	\end{lem}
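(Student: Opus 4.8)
The plan is to replace the relative tensor product by an ordinary one via the projection formula, expand using the decomposition of $M^i$, extract $a_{1\otimes(i,\alpha)}$ as a summand by Frobenius reciprocity, and then settle the moreover clause by a dimension count.

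First I would use that $a_{\alpha'\otimes 1}=\mathcal I(\alpha'\otimes 1)$ is the free $A$-module on $\alpha'\otimes 1$, where $\mathcal I(X)=A\boxtimes_{\CC_1\otimes\CC_2}X$. The projection formula \cite[Theorem 2.67]{CKM} gives an isomorphism of $A$-modules $M^i\boxtimes_A a_{\alpha'\otimes1}\cong M^i\boxtimes_{\CC_1\otimes\CC_2}(\alpha'\otimes1)$, the $A$-action being inherited from $M^i$. Using $M^i\cong\oplus_{\beta\in J_i}W^\beta\otimes M^{(i,\beta)}$, the underlying object in $\CC_1\otimes\CC_2$ is $\oplus_{\beta\in J_i}(W^\beta\boxtimes W^{\alpha'})\otimes M^{(i,\beta)}$.

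Next, since $A$ is regular commutative, $\mathcal I$ is biadjoint to the forgetful functor, so Frobenius reciprocity together with $a_{1\otimes(i,\alpha)}=\mathcal I(1_{\CC_1}\otimes M^{(i,\alpha)})$ gives
\begin{align*}
\Hom_A(a_{1\otimes(i,\alpha)},M^i\boxtimes_A a_{\alpha'\otimes1}) &\cong \Hom_{\CC_1\otimes\CC_2}\bigl(1_{\CC_1}\otimes M^{(i,\alpha)},\ \oplus_\beta (W^\beta\boxtimes W^{\alpha'})\otimes M^{(i,\beta)}\bigr)\\
&\cong\oplus_\beta \Hom_{\CC_1}(1_{\CC_1},W^\beta\boxtimes W^{\alpha'})\otimes\Hom_{\CC_2}(M^{(i,\alpha)},M^{(i,\beta)}).
\end{align*}
Because $W^{\alpha'}=(W^\alpha)'$, the first factor is $\delta_{\alpha\beta}$-dimensional and is spanned by the coevaluation $1_{\CC_1}\to W^\alpha\boxtimes W^{\alpha'}$; hence the whole space is $\End_{\CC_2}(M^{(i,\alpha)})$, and the identical computation gives an algebra isomorphism $\End_A(a_{1\otimes(i,\alpha)})\cong\End_{\CC_2}(M^{(i,\alpha)})$ realized by the $\gamma=1$ (unit) component of $A$, under which $\id_{a_{1\otimes(i,\alpha)}}$ corresponds to $\id_{M^{(i,\alpha)}}$. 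I would then take the morphism $\Phi$ corresponding to $\id_{M^{(i,\alpha)}}$, namely the image under $\mathcal I$ of the split inclusion $\mathrm{coev}\otimes\id$ into the $\beta=\alpha$ summand, together with the morphism $\Psi$ adjoint to the matching evaluation $W^\alpha\boxtimes W^{\alpha'}\to 1_{\CC_1}$, and show $\Psi\circ\Phi$ is a split. The point is that under the isomorphism $\End_A(a_{1\otimes(i,\alpha)})\cong\End_{\CC_2}(M^{(i,\alpha)})$ the composite $\Psi\circ\Phi$ is computed from $\mathrm{ev}\circ\mathrm{coev}$ acting in the $W^\alpha$-factor (the unit part $1_{\CC_1}\otimes M^{(1,1)}=1_{\CC_1\otimes\CC_2}$ of $A$ acting as the identity), which by the zigzag identity is a nonzero multiple of $\id_{M^{(i,\alpha)}}$; hence $\Psi\circ\Phi$ is a nonzero multiple of $\id_{a_{1\otimes(i,\alpha)}}$. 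Since $\CC_A$ is semisimple, this split monomorphism exhibits $a_{1\otimes(i,\alpha)}$ as a direct summand.

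Finally, for the moreover clause I would compute Frobenius--Perron dimensions in $\CC_A$. As the free module functor $\mathcal I$ preserves Frobenius--Perron dimension and $\CC_1,\CC_2$ are pseudo unitary, $\FPdim(a_{1\otimes(i,\alpha)})=\FPdim_{\CC_1\otimes\CC_2}(1_{\CC_1}\otimes M^{(i,\alpha)})=d_{(i,\alpha)}$, while $\FPdim(M^i\boxtimes_A a_{\alpha'\otimes1})=d_i\,\FPdim(a_{\alpha'\otimes1})=d_i d_{\alpha'}=d_i d_\alpha$, since $\FPdim$ is a ring homomorphism on $K(\CC_A)$ and $\FPdim(a_{\alpha'\otimes1})=d_{\alpha'}$. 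If $d_{(i,\alpha)}=d_i d_\alpha$ then the summand and the ambient module have equal dimension; as a proper summand in a fusion category has strictly smaller $\FPdim$, the two must coincide.

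The main obstacle is the middle step: upgrading the nonzero Hom space to a genuine direct summand when $M^{(i,\alpha)}$, and hence $a_{1\otimes(i,\alpha)}$, is not simple, so that every isotypic multiplicity must be controlled rather than merely a single nonzero morphism exhibited. My resolution is to build the explicit split injection from the coevaluation/evaluation of $W^\alpha$ and the unit part of $A$, and to identify $\Psi\circ\Phi$ with $\mathrm{ev}\circ\mathrm{coev}$ under the algebra isomorphism $\End_A(a_{1\otimes(i,\alpha)})\cong\End_{\CC_2}(M^{(i,\alpha)})$, rather than to argue only with dimensions of Hom spaces, which by themselves would leave open cancelling over- and under-counts among the simple summands.
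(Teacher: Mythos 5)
Your proposal is correct, and its Hom-space computations (Frobenius reciprocity identifying $\Hom_A(a_{1\otimes (i,\alpha)},M^i\boxtimes_A a_{\alpha'\otimes 1})$ with $\End_{\CC_2}(M^{(i,\alpha)})$, plus the dimension count for the ``moreover'' clause) coincide with what the paper does. Where you genuinely diverge is in how the summand conclusion is extracted. The paper does not construct a splitting: it writes $(i,\alpha)=\sum_j m_jX_j$ with $X_j$ inequivalent simples of $\CC_2$, invokes Lemma \ref{a} to see that each $a_{1\otimes X_j}$ is a \emph{simple} object of $(\CC_1\otimes\CC_2)_A$ (and the same Frobenius-reciprocity computation shows they are pairwise inequivalent), and then verifies $\langle a_{1\otimes X_j},M^i\boxtimes_A a_{\alpha'\otimes 1}\rangle=m_j$; in a semisimple category this multiplicity count immediately forces $\oplus_j m_j a_{1\otimes X_j}=a_{1\otimes(i,\alpha)}$ to be a summand, which fully answers the worry you raise about non-simple $M^{(i,\alpha)}$ without any explicit morphism. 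Your alternative --- building $\Phi$ from $\mathrm{coev}_{W^\alpha}$ into the $\beta=\alpha$ component and $\Psi$ from the matching evaluation, and checking $\Psi\circ\Phi$ acts as $\mathrm{ev}\circ\mathrm{coev}$ on the $W^\alpha$-factor --- is also valid, and has the merit of not needing semisimplicity for the splitting step; one small correction is that $\mathrm{ev}\circ\mathrm{coev}$ is not the zigzag identity but the categorical dimension $\dim W^\alpha$, which is what is nonzero (positive here by pseudo-unitarity). The price of your route is that you must verify the compatibility of composition with the two adjunctions (that $\Psi\circ\Phi$ really corresponds to $\psi\circ\phi$ under the identification of $\End_A(a_{1\otimes(i,\alpha)})$ with $\End_{\CC_2}(M^{(i,\alpha)})$ via the unit component of $A$), a diagram chase the paper's multiplicity argument avoids entirely.
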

\begin{proof}
	
	One can verify that 
	\begin{eqnarray*}
		\Hom_{A}(a_{1\otimes (i,\alpha)},a_{1\otimes (i,\alpha)})
		&=&\Hom_{A}(A\boxtimes_{\CC_1\otimes \CC_2}1_{\CC_1}\otimes (i,\alpha ),a_{1\otimes (i,\alpha)})\\
		&\cong&\Hom_{\CC_1\otimes \CC_2}(1_{\CC_1}\otimes (i,\alpha ),a_{1\otimes (i,\alpha)})\\
		&=&\Hom_{\CC_1\otimes \CC_2}(1_{\CC_1}\otimes (i,\alpha ), A\boxtimes_{\CC_1\otimes \CC_2}(1_{\CC_1}\otimes (i,\alpha )))\\
		&=&\Hom_{\CC_1\otimes \CC_2}(1_{\CC_1}\otimes (i,\alpha ),(\sum_{\beta\in  J_1}{\beta}\otimes {(1,\beta)})\boxtimes_{\CC_1\otimes \CC_2}(1_{\CC_1}\otimes (i,\alpha )))\\
		&=&\Hom_{\CC_1\otimes \CC_2}(1_{\CC_1}\otimes (i,\alpha ),1_{\CC_1}\otimes (i,\alpha ))\\
		&=&\Hom_{\CC_2}({(i,\alpha )},{(i,\alpha )}).
	\end{eqnarray*}
	Similarly, 
	\begin{eqnarray*}
		\Hom_{A}(a_{1\otimes (i,\alpha)},M^i\boxtimes_A a_{\alpha'\otimes 1})
		&=&\Hom_{A}(A\boxtimes_{\CC_1\otimes \CC_2}(1_{\CC_1}\otimes(i,\alpha )),M^i\boxtimes_A (A\boxtimes_{\CC_1\otimes \CC_2}{\alpha'}\otimes 1_{\CC_2}))\\
		&\cong&\Hom_{\CC_1\otimes \CC_2}(1_{\CC_1}\otimes (i,\alpha ),M^i\boxtimes_{\CC_1\otimes \CC_2} ({\alpha'}\otimes 1_{\CC_2}))\\
		&\cong&\Hom_{\CC_1\otimes \CC_2}((1_{\CC_1}\otimes (i,\alpha ))\boxtimes ({\alpha}\otimes 1_{\CC_2}),M^i)\\
		&=&\Hom_{\CC_1\otimes \CC_2}({\alpha}\otimes (i,\alpha ),M^i)\\
		&=&\Hom_{\CC_1\otimes \CC_2}({\alpha}\otimes (i,\alpha ),{\alpha}\otimes (i,\alpha))\\
		&=&\Hom_{\CC_2}({(i,\alpha )},{(i,\alpha)}).
	\end{eqnarray*}
	
	Let $(i,\alpha)=\sum _j m_j X_j$, where $X_j$ are inequivalent  simple objects in $\CC_2 ,$ and  $m_j$ are the multiplicities. Then $a_{1\otimes (i,\alpha)}=\sum_j m_j a_{1\otimes X_j }$.
	It is good enough to prove $m_j a_{1\otimes X_j }$ is contained in $M^i\otimes a_{\alpha' \otimes 1}$ or equivalently $\dim \Hom ( a_{1\otimes X_j },M^i\otimes a_{\alpha' \otimes 1})=m_j.$ The calculation  
	\begin{eqnarray*}
	\<a_{1\otimes X_j}, M^i\boxtimes a_{\alpha' \otimes 1}\>
		&=&\<1_{\CC_1}\otimes X_j, M^i\boxtimes a_{\alpha' \otimes 1}\>\\
		&=&\<1_{\CC_1}\otimes X_j, M^i \boxtimes {\alpha'}\otimes 1_{\CC_2}\>\\
		&=&\<{\alpha}\otimes X_j, M^i\>\\
		&=&\<X_j,(i,\alpha)\>\\
		&=&m_j,
	\end{eqnarray*}
	gives the desired result.
	
	Note that $d_{(i,\alpha)}=d_{1\otimes (i,\alpha)}.$ So if  $d_{(i,\alpha)}=d_id_\alpha$, then  $d_{1\otimes (i,\alpha)}=d_id_\alpha=d_{M^i\boxtimes_A a_{\alpha'\otimes 1}}$ and   $a_{1\otimes (i,\alpha)}=M^i\boxtimes_A a_{\alpha'\otimes 1}.$ The proof is complete.
	\end{proof}

\begin{rem} \label{r4.8}It is worthy to mention that Lemma \ref{kwl2} holds for any $M\in \Or((\CC_1\otimes \CC_2)_A).$
	\end{rem}

The following result tells us why the $\KW$ set is important.
\begin{prop}\label{p3.6}
	Let $i,j\in I$ and $\alpha,\beta\in J.$ Then 
	$$	\<M^i\boxtimes_A a_{\alpha'\otimes 1},M^j\boxtimes_A a_{\beta'\otimes 1}\>=\sum_{\epsilon\in \KW}N_{\alpha', \beta}^{\epsilon} N_{i',j}^k$$ 
	where $M^k=a_{\epsilon\otimes 1}.$ 
	In particular, if $\KW=\{1_{\CC_1}\}$ then $(i,\alpha)$ is simple for all $i\in I$ and $\alpha\in J_i.$
\end{prop}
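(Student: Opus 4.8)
The plan is to evaluate the pairing $\langle M^i\boxtimes_A a_{\alpha'\otimes 1}, M^j\boxtimes_A a_{\beta'\otimes 1}\rangle$ inside the fusion category $(\CC_1\otimes\CC_2)_A$ by moving everything onto one side via rigidity, using two structural inputs already at hand: the map $X\mapsto a_{X\otimes 1}$ is a tensor functor, so products of free modules decompose through the fusion rules of $\CC_1$ exactly as in Lemma \ref{sigma}, and a product of local modules is again local, so $\CC=(\CC_1\otimes\CC_2)_A^0$ is closed under $\boxtimes_A$. First I would use the right-dual adjunction $\Hom(X,Y\boxtimes_A Z)\cong\Hom(X\boxtimes_A Z^*,Y)$ to pull $a_{\beta'\otimes 1}$ out of the second slot. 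Since induction commutes with duality and $(\beta'\otimes 1)^*=\beta\otimes 1$, one has $(a_{\beta'\otimes 1})^*=a_{\beta\otimes 1}$, so the pairing becomes $\langle M^i\boxtimes_A a_{\alpha'\otimes 1}\boxtimes_A a_{\beta\otimes 1},\,M^j\rangle$.

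Next I would collapse the two free factors: because $a_{\alpha'\otimes 1}\boxtimes_A a_{\beta\otimes 1}\cong a_{(\alpha'\boxtimes\beta)\otimes 1}=\sum_{\epsilon\in J}N_{\alpha'\beta}^{\epsilon}\,a_{\epsilon\otimes 1}$, the pairing splits as $\sum_{\epsilon\in J}N_{\alpha'\beta}^{\epsilon}\langle M^i\boxtimes_A a_{\epsilon\otimes 1},M^j\rangle$. For each summand I would apply the left-dual adjunction $\Hom(M^i\boxtimes_A a_{\epsilon\otimes 1},M^j)\cong\Hom(a_{\epsilon\otimes 1},M^{i'}\boxtimes_A M^j)$. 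Now $M^{i'}\boxtimes_A M^j$ is a product of local modules, hence lies in $\CC$ and decomposes as $\sum_k N_{i'j}^{k}M^k$ with $N_{i'j}^{k}$ the fusion coefficients of $\CC$; since each $M^k$ and each $a_{\epsilon\otimes 1}$ is simple in $(\CC_1\otimes\CC_2)_A$ (Lemma \ref{a}), the surviving Hom is $\langle a_{\epsilon\otimes 1},M^k\rangle=\delta_{a_{\epsilon\otimes 1},M^k}$.

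The decisive point is that $M^k$ is \emph{local}, whereas $a_{\epsilon\otimes 1}$ is local if and only if $\epsilon\in\KW$; hence $\langle a_{\epsilon\otimes 1},M^k\rangle$ can be nonzero only for $\epsilon\in\KW$, in which case $a_{\epsilon\otimes 1}=M^k$. This is exactly where the Kac--Wakimoto set enters, and it is the step I expect to be the main obstacle: one must rule out any contribution from non-local free modules $a_{\epsilon\otimes 1}$, which rests on the simplicity of $a_{\epsilon\otimes 1}$ together with the identification of the local simple objects with the $M^k$. Collecting the surviving terms yields $\sum_{\epsilon\in\KW}N_{\alpha'\beta}^{\epsilon}N_{i'j}^{k}$ with $M^k=a_{\epsilon\otimes 1}$, as claimed. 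I would take care to use the right-dual adjunction first and the left-dual adjunction second, so that the duals fall on $\alpha,\beta,i,j$ in precisely the pattern $N_{\alpha'\beta}^{\epsilon}N_{i'j}^{k}$; the symmetries $N_{ab}^{c}=N_{b'a'}^{c'}$ and closure of $\KW$ under duals (Lemma \ref{kwl1}(1)) show that the alternative ordering gives a dual-reindexed but equal expression.

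For the final assertion I would specialize to $i=j$, $\alpha=\beta$ and $\KW=\{1_{\CC_1}\}$. Then only $\epsilon=1_{\CC_1}$ contributes, $a_{1_{\CC_1}\otimes 1}=A=M^1$, and $N_{\alpha'\alpha}^{1}=N_{i'i}^{1}=1$, so $\langle M^i\boxtimes_A a_{\alpha'\otimes 1},M^i\boxtimes_A a_{\alpha'\otimes 1}\rangle=1$; thus $M^i\boxtimes_A a_{\alpha'\otimes 1}$ is simple in $(\CC_1\otimes\CC_2)_A$. By Lemma \ref{kwl2} the object $a_{1\otimes(i,\alpha)}$ is a summand of $M^i\boxtimes_A a_{\alpha'\otimes 1}$, hence equals it, so $\langle(i,\alpha),(i,\alpha)\rangle_{\CC_2}=\langle a_{1\otimes(i,\alpha)},a_{1\otimes(i,\alpha)}\rangle=1$, and therefore $M^{(i,\alpha)}$ is simple for every $i\in I$ and $\alpha\in J_i$.
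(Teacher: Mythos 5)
Your proposal is correct and follows essentially the same route as the paper: move everything to one side by rigidity to reduce to $\<a_{(\alpha'\boxtimes\beta)\otimes 1},M^{i'}\boxtimes_A M^j\>$, expand via the fusion rules of $\CC_1$ and $\CC$, and observe that $\<a_{\epsilon\otimes 1},M^k\>$ survives only when the simple object $a_{\epsilon\otimes 1}$ is itself local, i.e.\ $\epsilon\in\KW$. The only cosmetic difference is that the paper evaluates the final Hom by restricting to $\CC_1\otimes\CC_2$ and invoking $\<1_{\CC_2},(k,\epsilon)\>=1$ from Lemma \ref{kwl1}(2), whereas you evaluate it directly as a pairing of simple objects in $(\CC_1\otimes\CC_2)_A$ via Lemma \ref{a}; both are valid and the concluding argument for simplicity of $(i,\alpha)$ when $\KW=\{1_{\CC_1}\}$ matches the paper's.
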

\begin{proof}
	 We have 
	\begin{eqnarray*}
		\<M^i\boxtimes_A a_{\alpha'\otimes 1},M^j\boxtimes_A a_{\beta'\otimes 1}\>
		&=&\< a_{\alpha'\otimes 1}\boxtimes_A a_{\beta\otimes 1},M^{i'}\boxtimes_A M^j\>\\
		&=&\< a_{(\alpha\boxtimes_{\CC_1} \beta')\otimes 1},M^{i'}\boxtimes_A M^j\>\\
		&=&\sum_{\gamma}\sum_{k}N_{\alpha', \beta}^{\gamma} N_{i',j}^k\< a_{\gamma\otimes 1},M^k\>\\
		&=&\sum_{\gamma}\sum_{k}N_{\alpha, \beta'}^{\gamma} N_{i',j}^k\< \gamma\otimes 1_{\CC_2},M^k\>\\
		&=&\sum_{\gamma}\sum_{k}N_{\alpha', \beta}^{\gamma} N_{i',j}^k\< \gamma\otimes 1_{\CC_2},\oplus_{\epsilon}\epsilon\otimes (k,\epsilon)\>\\
		&=&\sum_{\gamma}\sum_{\epsilon\in \KW}N_{\alpha', \beta}^{\gamma} N_{i',j}^k
		\<\gamma\otimes 1_{\CC_2}, \epsilon\otimes (k,\epsilon)\>\\
		&=&\sum_{\epsilon\in \KW}N_{\alpha', \beta}^{\epsilon} N_{i',j}^k
		\<\epsilon \otimes 1_{\CC_2}, \epsilon\otimes (k,\epsilon)\>\\
		&=&\sum_{\epsilon\in \KW}N_{\alpha, \beta'}^{\epsilon} N_{i',j}^k
		\end{eqnarray*}	
	where we have used $M^k$ for $a_{\epsilon\otimes 1}$ in the last three equations involving KW set.
	
If $\KW=\{1_{\CC_1}\}$ then $\<M^i\boxtimes_A a_{\alpha'\otimes 1},M^j\boxtimes_A a_{\beta'\otimes 1}\>=N_{\alpha', \beta}^{1} N_{i',j}^1=\delta_{\alpha,\beta}\delta_{i,j}.$
This implies that $M^i\boxtimes_A a_{\alpha'\otimes 1}$  is simple for $\alpha\in J_i,$ and $M^i\boxtimes_A a_{\alpha'\otimes 1},$ $M^j\boxtimes_A a_{\beta'\otimes 1}$ are inequivalent for $\beta\in J_j$ and $(i,\alpha)\ne (j,\beta).$
By Lemma \ref{kwl2},
$a_{1\otimes (i,\alpha) }=M^i\boxtimes_A a_{\alpha'\otimes 1}$ is simple. Thus 
$\{(i,\alpha)|i\in I,\alpha\in J_i\}$ are all the inequivalent simple objects in $\CC_2.$
\end{proof}
\begin{rem}\label{r4.10a} Using Lemma \ref{kwl2} and Proposition \ref{p3.6} we see that 
if $N_{\alpha,\alpha'}^{\epsilon}=\delta_{\epsilon, 1}$ for all $\epsilon\in\KW$ then
$(i,\alpha)$ is a simple in $\CC_2.$
\end{rem}
\begin{rem}\label{r4.10} If we replace $M^i,M^j$ by arbitrary simple $A$-modules in $(\CC_1\otimes \CC_2)_A$ in Proposition \ref{p3.6}, the sum on the right hand side will be over $\epsilon \in J.$
	\end{rem}

Our goal is to decompose $(i,\alpha)$ into a direct sum of simple objects in $\CC_2.$ Proposition \ref{p3.6} gives an upper bound of $\dim \Hom((i,\alpha),(i,\alpha))$
for $\alpha\in J_i.$ If $d_{(i,\alpha)}=d_id_\alpha,$ then we know $\dim \Hom((i,\alpha),(i,\alpha))$ precisely. Even in this case, there is still a distance to determine the exact decomposition of $(i,\alpha)$ into a direct sum of simple objects. 
	
\section{$S$-matrices and $\KW$-set}

In this section,  we determine how the normalized $s$-matrix $\ddot{s}$ of category $\CC_2$ acts  acts on $(i,\alpha)$ for $i\in I$ and $\alpha\in J_i$ and show that the subspace of $K_2$ spanned by $(i,\alpha)$ is invariant under the action of $\ddot{s}.$    We also give a formula for $\dim (i,\alpha)$ using the $\KW$ set.

We first discuss the relations among  $S$-matrices in categories $\CC_1,\CC_2$ and $\CC=(\CC_1\otimes\CC_2)_A^0.$ 
Let $c^1_{\alpha,\beta}: W^{\alpha}\boxtimes W^{\beta} \to W^{\beta}\boxtimes W^{\alpha}$
be the braiding in $\CC_1,$ $c^2_{\phi, \xi}: N^{\phi}\boxtimes N^{\xi}\to N^{\xi}\boxtimes N^{\phi}$ be the braiding in $\CC_2$ and $c_{i,j}:M^i\boxtimes M^j\to M^j\boxtimes M^i$ be the braiding in $\CC.$ Let $\theta^1, \theta^2,\theta$ be the ribbon structures on categories
$\CC_1,\CC_2,\CC.$ Then $\theta^i_{X\boxtimes Y}=\theta_X^i\boxtimes \theta_Y^i\circ c^i_{Y,X}\circ c^i_{X,Y}$ for $X,Y\in \CC_i.$ Similar relation holds for $\theta.$ 
Set $\tilde{\dot{s}}_{\alpha\beta}=\tr (c^1_{W^\beta,W^\alpha}\circ c^1_{W^\alpha,W^\beta}).$ Let  $\tilde{\dot{s}}=(\tilde{\dot{s}}_{\alpha\beta}), \tilde{\ddot{s}}=(\tilde{\ddot{s}}_{\phi\xi}), 
\tilde{s}=(\tilde{s}_{ij})$ be the $S$-matrices associated to the modular tensor categories $\CC_1, \CC_2, \CC.$   We also set $\dot{s}=\frac{\tilde{\dot{s}}}{D_1}, \ddot{s}=\frac{\tilde{\ddot{s}}}{D_2},
s=\frac{\tilde{s}}{D}$ where $D_1=\sqrt{\dim \CC_1}, D_2=\sqrt{\dim \CC_2}, D=\sqrt{\dim \CC}.$ Also let $K_i=K(\CC_i)$ be the fusion algebra of $\CC_i$ for $i=1,2.$ Then the fusion algebra of $\CC_1\otimes\CC_2$ is $K_1\otimes K_2.$ Also let $K(\CC)$ be the fusion algebra of $\CC.$ Then $\tilde{\dot{s}},$   $\tilde{\ddot{s}},$ $\tilde{s}$ act on $K_1, K_2, K(\CC)$ respectively.  Let $G: \CC\to \CC_1\otimes \CC_2$ be the functor of restriction. Then $G$ induces a linear map from $K(\CC)$ to $K_1\otimes K_2$ such that 
$$G(dim A)\tilde{s}=(\tilde{\dot{s}}\otimes \tilde{\ddot{s}})G$$
by  \cite[Theorem 4.1]{KO}. Note that $\dim (\CC_1\otimes \CC_2)=D_1^2D_2^2$ and $D=\frac{D_1D_2}{\dim A}.$ The following result is immediate.
\begin{lem}\label{s-matrix} We have 
	$$Gs=(\dot{s}\otimes \ddot{s})G.$$
\end{lem}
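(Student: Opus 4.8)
The plan is to derive the normalized identity directly from the unnormalized relation already quoted from \cite[Theorem 4.1]{KO}, namely
$$G(\dim A)\tilde{s}=(\tilde{\dot{s}}\otimes \tilde{\ddot{s}})G,$$
by substituting the definitions of the normalized $S$-matrices and then checking that all scalar factors cancel. Since this is purely a bookkeeping computation of normalization constants, there is no serious obstacle; the only thing to verify is that the dimension relations collapse to the right overall scalar.

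Concretely, I would first record the single scalar identity that drives everything:
$$(\dim A)\,D=(\dim A)\cdot\frac{D_1 D_2}{\dim A}=D_1 D_2,$$
which follows at once from the stated formula $D=\frac{D_1 D_2}{\dim A}$. This is the heart of the matter: the factor $\dim A$ appearing on the left of the \cite{KO} relation is exactly what is needed to reconcile the normalization of $s$ on $\CC$ with the product normalization of $\dot{s}\otimes\ddot{s}$ on $\CC_1\otimes\CC_2$.

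Next I would substitute $\tilde{s}=Ds$, $\tilde{\dot{s}}=D_1\dot{s}$ and $\tilde{\ddot{s}}=D_2\ddot{s}$ into the \cite{KO} identity. Pulling the scalars out of the tensor product on the right gives
$$(\dim A)\,D\,Gs=D_1 D_2\,(\dot{s}\otimes \ddot{s})G.$$
Applying the scalar identity $(\dim A)D=D_1 D_2$ to the left-hand coefficient yields $D_1 D_2\,Gs=D_1 D_2\,(\dot{s}\otimes\ddot{s})G$, and cancelling the nonzero factor $D_1 D_2$ produces $Gs=(\dot{s}\otimes\ddot{s})G$, as desired. The one point I would be slightly careful about is that $G$ is a fixed linear map from $K(\CC)$ to $K_1\otimes K_2$ and the matrices act on the appropriate fusion algebras, so that moving the scalars $D,D_1,D_2$ (which are constants) across $G$ and the $S$-matrices is unambiguous; granting that, the computation is immediate.
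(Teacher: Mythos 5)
Your proposal is correct and follows exactly the paper's route: the paper states the unnormalized relation $G(\dim A)\tilde{s}=(\tilde{\dot{s}}\otimes \tilde{\ddot{s}})G$ from \cite[Theorem 4.1]{KO} together with $D=\frac{D_1D_2}{\dim A}$ and then declares the lemma immediate, which is precisely the scalar cancellation you carry out. Your write-up simply makes explicit the bookkeeping the paper leaves to the reader.
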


We now determine the action of $\ddot{s}$ on each $(i,\alpha)\in K_2$ for $i=1,...,p.$  Note that $(i,\alpha)=0$
if $\alpha\not\in J_i.$ Let $\dot{S}, \ddot{S}, S$ be the corresponding linear transformations on
$K_1,K_2,K(\CC).$ Define $q\times p$ matrix $Z=((i,\alpha))_{\alpha=1,...,q, i=1,...,p}\in M_{p\times q}(K(\CC_2))$ with 
entries in $K_2$ 
and row vectors
$$\overrightarrow{M}=(M^i)_{i=1,...,p}\in K(\CC)^p,\ \ \ \ \ \overrightarrow{W}=(W^\alpha)_{\alpha=1,...,q}\in K_1^q.$$
It is easy to see that 
$G\overrightarrow{M}=\overrightarrow{W}\otimes Z.$ Note that
$$S\overrightarrow{M}=(SM^1,...,SM^p)=\overrightarrow{M}s,$$
$$\dot{S}\overrightarrow{W}=(\dot{S}W^1,...,\dot{S}W^q)=\overrightarrow{W}\dot{s},$$
$$\ddot{S}Z=(\ddot{S}(i,\alpha))_{\alpha=1,...,q, i=1,...,p}.$$ 
Applying Lemma \ref{s-matrix}  to $\overrightarrow{M}$ gives 
$$\overrightarrow{W}\otimes Z s=\overrightarrow{W}\dot{s}\otimes (\ddot{S}(i,\alpha))=\overrightarrow{W}\otimes \dot{s}(\ddot{S}(i,\alpha)).$$
Since $W^1,...,W^q$ are linearly independent in $K_1$ we conclude that
$$Zs=\dot{s}(\ddot{S}(i,\alpha))$$ or equivalently
$$(\ddot{S}(i,\alpha))=(\dot{s})^{-1}Zs=\overline{\dot{s}}Zs$$
by noting that $\dot{s}, \ddot{s}, s$  are symmetric, unitary.
\begin{lem}\label{s-matrix1} For any $i\in I$ and $\alpha\in J_i$ we have
	$$\ddot{S}(i,\alpha)=\sum_{j\in I,\beta\in J_j}\overline{\dot{s}_{\alpha,\beta}}s_{i,j}(j,\beta),$$
	and every simple object in $\CC_2$ appears in some $(i,\alpha).$
	\end{lem}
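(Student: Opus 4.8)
The displayed formula is essentially already in hand from the computation immediately preceding the statement, so the first half reduces to bookkeeping. The plan is to read off the matrix identity $(\ddot{S}(i,\alpha))=\overline{\dot{s}}\,Z\,s$ in components. Since $Z$ has $(\beta,j)$-entry $(j,\beta)$ and $\dot{s},s$ are symmetric and unitary, multiplying out the two products gives
$$\ddot{S}(i,\alpha)=\sum_{\beta,j}\overline{\dot{s}_{\alpha\beta}}\,(j,\beta)\,s_{ji}=\sum_{j\in I,\beta\in J_j}\overline{\dot{s}_{\alpha\beta}}\,s_{ij}\,(j,\beta),$$
where the restriction $\beta\in J_j$ is automatic since $(j,\beta)=0$ otherwise. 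This is exactly the asserted formula, so nothing further is needed for this half.

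For the second assertion I would set up the problem as follows. Let $V\subseteq K_2$ be the subspace spanned by all $(i,\alpha)$, and let $T\subseteq\Or(\CC_2)$ be the set of simple objects occurring in at least one $(i,\alpha)$; then $V\subseteq U:=\Span\{N^\phi\mid\phi\in T\}$, and the goal becomes showing $T=\Or(\CC_2)$. The formula just proved shows $\ddot{S}(i,\alpha)\in V$ for every $i,\alpha$, so $V$ is $\ddot{S}$-invariant and hence $\ddot{S}V\subseteq V\subseteq U$. By assumption (2) we have $(1,1)=M^{(1,1)}=1_{\CC_2}=N^1$, so $N^1\in V$ and therefore $\ddot{S}N^1\in U$.

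The decisive step is then to compute $\ddot{S}N^1$ explicitly. Because $N^1=1_{\CC_2}$, the double braiding with the unit is trivial, so $\tilde{\ddot{s}}_{\phi 1}=\tr(\id_{N^\phi})=\dim N^\phi$ and thus $\ddot{S}N^1=\sum_{\phi\in K}\frac{\dim N^\phi}{D_2}N^\phi$. Since $\CC_2$ is pseudo unitary, $\dim N^\phi=\FPdim(N^\phi)\geq 1>0$ for every $\phi$, so $\ddot{S}N^1$ has strictly positive coefficient on each simple object. As $\ddot{S}N^1\in U$ and the $N^\phi$ are linearly independent, every $\phi$ with nonzero coefficient must lie in $T$; positivity of all coefficients then forces $T=\Or(\CC_2)$, which is precisely the claim. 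The only real content is this last step — feeding the vacuum $N^1$ through the $\ddot{S}$-invariant subspace and exploiting positivity of the quantum dimensions — and I expect it to be the main (indeed the only) obstacle, everything else being immediate from the preceding lemmas.
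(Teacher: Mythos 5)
Your proposal is correct and follows essentially the same route as the paper: the displayed formula is read off from the matrix identity $(\ddot{S}(i,\alpha))=\overline{\dot{s}}Zs$ established just before the lemma, and the second assertion is obtained by applying the formula to $(1,1)=1_{\CC_2}$ and using that every coefficient $\ddot{s}_{1\phi}$ in $\ddot{S}1_{\CC_2}=\sum_{\phi}\ddot{s}_{1\phi}N^{\phi}$ is nonzero because $\dim N^{\phi}>0$. Your subspace bookkeeping with $V$, $U$, $T$ is a slightly more formal packaging of the paper's one-line conclusion, but the underlying argument is identical.
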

\begin{proof}
	We only need to show that every simple object in $\CC_2$ appears in some $(i,\alpha).$ Note that $\ddot{S}1_{\CC_2}=\sum_{\phi\in K}\ddot{s}_{1\phi}N^{\phi}$ 
	and $\ddot{s}_{1\phi}\ne 0$ for all $\phi\in K$ as $\dim N^\phi=\frac{\ddot{s}_{1\phi}}{\ddot{s}_{11}} $ is positive. 
	So we have
	$$\ddot{S}1_{\CC_2}=\ddot{S}(1,1)=\sum_{j\in I,\beta\in J_1}\overline{\dot{s}_{1,\beta}}s_{1,j}(j,\beta).$$
Writing each $(i,\alpha)$ as a linear combination of $N^{\phi}$ we see that each $N^{\phi}$
must appears in some $(i,\alpha).$
\end{proof}

For $i\in I$ and $\alpha\in J_i,$ we set 
$$b(i,\alpha)=\sum_{\beta\in \KW}\overline{\dot{s}_{\alpha\beta}}s_{ij}$$
where  $M^j=a_{\beta\otimes 1}.$
 We have
\begin{prop}\label{p4.3} We have the dimension formula  
	$$\dim(i,\alpha)=\frac{b(i,\alpha)}{b(1,1)}$$
	for all $i\in I, \alpha\in J_i.$
	In particular, $b(i,\alpha)\ne 0.$
\end{prop}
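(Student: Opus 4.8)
The plan is to extract $\dim(i,\alpha)$ from the $\ddot{S}$-action computed in Lemma \ref{s-matrix1} by reading off the coefficient of the unit $1_{\CC_2}$. The crucial elementary observation is that in the pseudo unitary modular category $\CC_2$ one has $\ddot{s}_{1\xi}=\dim(N^\xi)/D_2$ for every simple $N^\xi$, since $\tilde{\ddot{s}}_{1\xi}=\tr(c^2_{N^\xi,1}\circ c^2_{1,N^\xi})=\dim N^\xi$ (double braiding with the unit is trivial) and $\ddot{s}=\tilde{\ddot{s}}/D_2$; this is also visible from the proof of Lemma \ref{s-matrix1}, where $\dim N^\phi=\ddot{s}_{1\phi}/\ddot{s}_{11}$ with $\ddot{s}_{11}=1/D_2$. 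Consequently, for any $X=\sum_\xi m_\xi N^\xi\in K_2$ the coefficient of $1_{\CC_2}$ in $\ddot{S}X$ equals $\sum_\xi m_\xi\,\ddot{s}_{1\xi}=\dim(X)/D_2$. So once I identify the $1_{\CC_2}$-coefficient of $\ddot{S}(i,\alpha)$, multiplying by $D_2$ recovers $\dim(i,\alpha)$.

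Next I would compute that coefficient from Lemma \ref{s-matrix1}, which gives $\ddot{S}(i,\alpha)=\sum_{j\in I,\beta\in J_j}\overline{\dot{s}_{\alpha\beta}}\,s_{ij}(j,\beta)$. The coefficient of $1_{\CC_2}$ in each summand is $\overline{\dot{s}_{\alpha\beta}}\,s_{ij}\<1_{\CC_2},(j,\beta)\>$, so the problem reduces to deciding when $1_{\CC_2}$ occurs in $(j,\beta)$. Using $M^j\cong\oplus_\gamma W^\gamma\otimes(j,\gamma)$ together with Frobenius reciprocity for the induction (free module) functor $A\boxtimes_{\CC_1\otimes\CC_2}(-)$, one gets $\<1_{\CC_2},(j,\beta)\>=\<W^\beta\otimes1_{\CC_2},M^j\>=\<a_{\beta\otimes1},M^j\>$ in $(\CC_1\otimes\CC_2)_A$. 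Since $a_{\beta\otimes1}$ is simple (Lemma \ref{a}) and $M^j$ is a simple local object, this Hom space is $1$ precisely when $a_{\beta\otimes1}\cong M^j$, which forces $a_{\beta\otimes1}$ to be local, i.e. $\beta\in\KW$ and $M^j=a_{\beta\otimes1}$, and is $0$ otherwise (a non-local simple cannot be isomorphic to the local simple $M^j$). This is consistent with Lemma \ref{kwl1}(2).

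Substituting, the $1_{\CC_2}$-coefficient of $\ddot{S}(i,\alpha)$ collapses to the sum over $\beta\in\KW$ with $M^j=a_{\beta\otimes1}$, namely $\sum_{\beta\in\KW}\overline{\dot{s}_{\alpha\beta}}\,s_{ij}=b(i,\alpha)$. Combined with the unit-coefficient principle, this yields $\dim(i,\alpha)=D_2\,b(i,\alpha)$. Specializing to $(i,\alpha)=(1,1)=1_{\CC_2}$, whose dimension is $1$, gives $D_2\,b(1,1)=1$, so $D_2=1/b(1,1)$; hence $\dim(i,\alpha)=b(i,\alpha)/b(1,1)$ for all $i\in I,\alpha\in J_i$. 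Finally, since $(i,\alpha)\neq0$ for $\alpha\in J_i$ and $\CC_2$ is pseudo unitary, $\dim(i,\alpha)=\FPdim(i,\alpha)>0$, forcing $b(i,\alpha)\neq0$.

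I expect the main obstacle to be the bookkeeping in the second paragraph: correctly invoking Frobenius reciprocity for the free-module functor and confirming that the simplicity of $a_{\beta\otimes1}$ and the locality of $M^j$ pin the multiplicity $\<1_{\CC_2},(j,\beta)\>$ down to the indicator of the condition ``$\beta\in\KW$ and $M^j=a_{\beta\otimes1}$''. The dimension extraction itself is routine once the unit-coefficient principle is established.
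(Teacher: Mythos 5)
Your proposal is correct and follows essentially the same route as the paper: read off the coefficient of $1_{\CC_2}$ in $\ddot{S}(i,\alpha)$ from Lemma \ref{s-matrix1}, identify it with $b(i,\alpha)$ via $\<1_{\CC_2},(j,\beta)\>=\delta_{M^j,a_{\beta\otimes 1}}$ for $\beta\in\KW$ (the paper cites Lemma \ref{kwl1} where you spell out the Frobenius reciprocity), and divide by the unit case $b(1,1)=\ddot{s}_{11}$ using $\dim N^\phi=\ddot{s}_{1\phi}/\ddot{s}_{11}$. Your write-up merely makes explicit the converse multiplicity computation that the paper leaves implicit.
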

\begin{proof} Note  from lemmas \ref{kwl1} and \ref{s-matrix1} that the coefficient of $N^1=1_{\CC_2}$ in 
	$\ddot{S}(i,\alpha)$ is $b(i,\alpha).$  In particular, $\ddot{s}_{11}=b(1,1).$ The result follows the definition of categorical dimension immediately. Since $\CC_2$ is pseudo unitary,  we see that $b(i,\alpha)\ne 0.$  
\end{proof}

The $b(i,\alpha)$ was first introduced in \cite{KW} to study the coset constructions for affine Kac-Moody algebras. It was conjectured \cite[Conjecture 2.5]{KW} that $b(i,\alpha)\ne 0$ for
$\alpha\in J_i.$ Proposition \ref{p4.3} asserts that the Kac-Wakimoto conjecture in the coset setting associated with pseudo unitary modular tensor categories is always true. 

\section{Identifications }

In this section, we prove that for any two modules $M^i, M^j\in \CC$ either the simple objects of $\CC_2$ appearing in $M^i, M^j$ are the same, or there is no intersection between simple objects of $\CC_2$ appearing in $M^i$ and $M^j.$ We also gives a dimension formula for any $(i,\alpha)$ in terms of dimensions of $M^i$ and $W^{\alpha},$ and discuss when $\KW$ forms a group.

Recall the braided fusion category $\CC^{kw}$ from Section 4. Clearly,  both $\CC$ and $(\CC_1\otimes \CC_2)_A$  are right  $\CC^{kw}$-modules.  There is  an equivalence relation on $\Or((\CC_1\otimes\CC_2)_A)$ such that  $X\equiv Y$ iff there exists
$Z\in\Or(\CC^{kw})$ such that $X$ is a direct summand of $Z\boxtimes Y$ \cite{EGNO}. This also defines an equivalence relation on $\Or(\CC).$  Note that the equivalence class of $A$ is exactly the set $\{a_{\alpha\otimes 1}|\alpha\in\KW\}.$ For $X\in (\CC_1\otimes\CC_2)_A$ 
we denote $X_{\CC_i}$ the set of simple objects of $\CC_i$ appearing in $X$ for $i=1,2.$

The following result gives the first identification between $M^i_{\CC2}$ and $M^j_{\CC_2}.$
\begin{prop}\label{identify}  Let $X,Y\in \Or(\CC).$  Then
	
	(1)  If $X,Y$ are equivalent, then $X_{\CC_2}=Y_{\CC_2}.$ 
	
	(2) If $X,Y$ are inequivalent, then $X_{\CC_2} \cap Y_{\CC_2}=\emptyset.$ 
	\end{prop}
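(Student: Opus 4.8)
The plan is to exploit the equivalence relation on $\Or(\CC)$ induced by the module category structure over $\CC^{kw}$, together with the key multiplicity formula in Proposition \ref{p3.6}. The central observation is that $X_{\CC_2} \cap Y_{\CC_2} \neq \emptyset$ precisely when there is a common simple object $N^\phi \in \CC_2$ appearing in both $X$ and $Y$, which translates into a nonvanishing inner product computation. I would reformulate both statements using the identity $a_{1\otimes N^\phi} = A\boxtimes_{\CC_1\otimes\CC_2}(1_{\CC_1}\otimes N^\phi)$, so that $N^\phi \in X_{\CC_2}$ is equivalent to $\langle a_{1\otimes N^\phi}, X\rangle \neq 0$ in $(\CC_1\otimes\CC_2)_A$. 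This reduces a statement about the $\CC_2$-content to a statement about hom-spaces of $A$-modules, where Proposition \ref{p3.6} applies directly.

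\textbf{Key steps.} First I would establish part (1). If $X \equiv Y$, then by definition there exists $Z \in \Or(\CC^{kw})$, i.e. $Z = a_{\beta\otimes 1}$ for some $\beta \in \KW$, such that $X$ is a direct summand of $Z \boxtimes_A Y = a_{\beta\otimes 1}\boxtimes_A Y$. The essential point is that tensoring by $a_{\beta\otimes 1}$ does not change the set of $\CC_2$-simple objects that appear: since $a_{\beta\otimes 1} \cong \oplus_{\delta\in J_1}(\delta\boxtimes\beta)\otimes(1,\delta)$ lies in $\CC$ and $\beta \in \KW$, Lemma \ref{kwl1}(2) gives $\langle 1_{\CC_2}, (k,\beta)\rangle = 1$ where $M^k = a_{\beta\otimes 1}$. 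I would show that $\boxtimes_A a_{\beta\otimes 1}$ acts on the $\CC_2$-content as tensoring by an invertible object of $\CC_2$ in the relevant sense — more precisely, using Lemma \ref{sigma}(3) the functor $\FF(X) = A\boxtimes(X\otimes 1_{\CC_2})$ restricts to a braided equivalence $\CC_1^{kw} \to \CC^{kw}$, and since each $a_{\beta\otimes 1}$ for $\beta\in\KW$ is a local module, its $\CC_2$-component at the trivial $\CC_1$-label is $1_{\CC_2}$. This forces $X_{\CC_2} = (a_{\beta\otimes 1}\boxtimes_A Y)_{\CC_2} \supseteq X_{\CC_2}$, and by symmetry of the equivalence relation (noting $\beta'\in\KW$ by Lemma \ref{kwl1}(1)) we get equality $X_{\CC_2}=Y_{\CC_2}$.

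\textbf{The main obstacle.} The harder direction is part (2), the disjointness. Here I would argue by contraposition: suppose $X_{\CC_2} \cap Y_{\CC_2} \neq \emptyset$, so some simple $N^\phi \in \CC_2$ appears in both $X$ and $Y$. Writing $X = \oplus_\alpha W^\alpha \otimes M^{(X,\alpha)}$ and likewise for $Y$, this means $N^\phi$ appears in some $M^{(X,\alpha)}$ and some $M^{(Y,\gamma)}$. The goal is to deduce $X \equiv Y$, i.e. to produce $\beta \in \KW$ with $X$ a summand of $a_{\beta\otimes 1}\boxtimes_A Y$. The key computation will be to show $\langle X, a_{\alpha\otimes 1}\boxtimes_A Y\rangle \neq 0$ for a suitable $\alpha$, and I expect to invoke Proposition \ref{p3.6} with the substitution $M^i = X$, $M^j = Y$: the inner product $\langle X\boxtimes_A a_{\alpha'\otimes 1}, Y\boxtimes_A a_{\gamma'\otimes 1}\rangle = \sum_{\epsilon\in\KW} N_{\alpha',\gamma}^\epsilon N_{X',Y}^k$ is controlled entirely by the $\KW$-set. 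The crux is to translate the shared appearance of $N^\phi$ into the nonvanishing of such a sum, which should follow from Lemma \ref{kwl2} by relating $a_{1\otimes N^\phi}$ to $X\boxtimes_A a_{\alpha'\otimes 1}$ and to $Y\boxtimes_A a_{\gamma'\otimes 1}$: since $a_{1\otimes N^\phi}$ is a summand of both (by the multiplicity computation in Lemma \ref{kwl2} applied via Remark \ref{r4.8}), these two objects share a common simple $A$-module summand, forcing a nonzero term in the $\KW$-indexed sum and hence a $\beta\in\KW$ witnessing $X\equiv Y$. The delicate part will be verifying that the common $\CC_2$-summand $N^\phi$ genuinely forces a nonzero coefficient $N_{X',Y}^k$ with $M^k = a_{\epsilon\otimes 1}$ for some $\epsilon\in\KW$, rather than being absorbed into cancellations; this is where the structure of $\KW$ as indexing exactly the local summands is indispensable.
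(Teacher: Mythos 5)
Your proposal is correct and follows essentially the same route as the paper: part (1) via the observation that $Y\boxtimes_A a_{\gamma\otimes 1}=Y\boxtimes_{\CC_1\otimes\CC_2}(\gamma\otimes 1_{\CC_2})$ leaves the $\CC_2$-content untouched, and part (2) via Lemma \ref{kwl2} together with the $\KW$-indexed formula of Proposition \ref{p3.6} and the fact that $N_{X',Y}^{M^k}=\dim\Hom_A(Y,X\boxtimes_A M^k)$ detects the equivalence relation (the paper argues directly that inequivalence kills every term, you argue the contrapositive; these are the same computation). The ``cancellation'' worry at the end is moot since all coefficients in the sum are nonnegative integers, so positivity of the sum forces a positive term.
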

\begin{proof}
	(1) Let  $X$ is a direct summand of $Y\boxtimes_A Z$ for some $Z\in \CC^{kw}.$ Note from  Lemma \ref{a} that $Z=a_{\gamma\otimes 1}$ for some $\gamma\in \KW.$  Let $Y=\oplus_{\alpha\in J}W^{\alpha}\otimes Y^{\alpha}$ in $\CC_1\otimes\CC_2.$ So 
	$$Y\boxtimes_A Z=Y\boxtimes_{\CC_1\otimes \CC_2} (\gamma \otimes 1_{\CC_2})=\oplus_{\alpha\in J}(\alpha\boxtimes \gamma)\otimes Y^{\alpha}.$$ 
	This implies that the simple objects of $\CC_2$ appear in $X$ are subset of simple objects of $\CC_2$ appear in $Y.$ Similarly,
	the simple objects of $\CC_2$ appear in $Y$ are subset of simple objects of $\CC_2$ appear in $X.$
	
	(2)  Let  $X=\oplus_{\alpha\in J}W^{\alpha}\otimes X^{\alpha}$ in $\CC_1\otimes\CC_2.$  Assume that  $X^\alpha\ne 0$ and $Y^\beta\ne 0.$ By Lemma \ref{kwl2} and Proposition \ref{p3.6} we have 
$$\<X^{\alpha}, Y^{\beta}\>=\<a_{1\otimes X^{\alpha}},a_{1\otimes Y^{\beta}}\>\leq 	\<X\boxtimes_A a_{\alpha'\otimes 1},Y\boxtimes_A a_{\beta'\otimes 1}\>=\sum_{\epsilon\in \KW}N_{\alpha' \beta}^{\epsilon} N_{X'Y}^{M^k}$$
where $M^k=a_{\epsilon\otimes 1}.$
Note that
 $$N_{X',Y}^{M^k}=\dim \Hom_A(X'\boxtimes_A Y, M^k)=\dim\Hom_A(Y,X\boxtimes_A M^k)=0$$
  as $X,Y$ are inequivalent.
This forces $\<X^{\alpha}, Y^{\beta}\>=0.$ Consequently, simple objects of $\CC_2$ appear in $X$ and $Y$ are inequivalent. 
\end{proof}

For later discussion, we need to consider a subcategory $\DD$ of $(\CC_1\otimes \CC_2)_A$ generated by $a_{\alpha\otimes 1}$ for all $\alpha\in J.$ It is clear that $\DD$ is a braided fusion category which is braided equivalent to category $\CC_1.$ Then $(\CC_1\otimes \CC_2)_A$ is a ${\DD}$-module category. As before there is an equivalence relation on $(\CC_1\otimes \CC_2)_A.$  Using the proof of Proposition \ref{identify}, Remarks \ref{r4.8} and \ref{r4.10} we have 

\begin{prop}\label{identify1}  Let $X,Y\in \Or((\CC_1\otimes \CC_2)_A).$  Then
	
	(1)  If $X,Y$ are equivalent, then $X_{\CC_2}=Y_{\CC_2}.$ 
	
	(2) If $X,Y$ are inequivalent, then $X_{\CC_2} \cap Y_{\CC_2}=\emptyset.$ 
\end{prop}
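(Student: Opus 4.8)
The plan is to prove Proposition \ref{identify1} by adapting the proof of Proposition \ref{identify} almost verbatim, replacing the modular category $\CC$ by the larger fusion category $(\CC_1\otimes\CC_2)_A$ and the braided fusion subcategory $\CC^{kw}$ by $\DD$. The key structural fact is that $\DD$ is braided equivalent to $\CC_1$ via $\alpha\mapsto a_{\alpha\otimes 1}$, so every simple object $Z\in\Or(\DD)$ has the form $Z=a_{\gamma\otimes 1}$ for some $\gamma\in J$ (now ranging over all of $J$, not merely over $\KW$). This is the only place where the passage from $\CC^{kw}$ to $\DD$ matters, and it is precisely why Remarks \ref{r4.8} and \ref{r4.10} are invoked: they extend Lemma \ref{kwl2} and Proposition \ref{p3.6} from the modules $M^i\in\Or(\CC)$ to arbitrary simple $A$-modules in $(\CC_1\otimes\CC_2)_A$, with the fusion sum in Proposition \ref{p3.6} now taken over all $\epsilon\in J$ rather than $\epsilon\in\KW$.

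For part (1), I would suppose $X$ is a direct summand of $Y\boxtimes_A Z$ with $Z\in\Or(\DD)$. Writing $Z=a_{\gamma\otimes 1}$ and $Y=\oplus_{\alpha\in J}W^\alpha\otimes Y^\alpha$ in $\CC_1\otimes\CC_2$, the same computation
$$Y\boxtimes_A Z=Y\boxtimes_{\CC_1\otimes\CC_2}(\gamma\otimes 1_{\CC_2})=\oplus_{\alpha\in J}(\alpha\boxtimes\gamma)\otimes Y^\alpha$$
shows that tensoring with $Z$ only reshuffles the $\CC_1$-labels while leaving the set of $\CC_2$-constituents unchanged. Hence $X_{\CC_2}\subseteq Y_{\CC_2}$, and by symmetry of the equivalence relation $Y_{\CC_2}\subseteq X_{\CC_2}$, giving $X_{\CC_2}=Y_{\CC_2}$.

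For part (2), I would run the inequality from the proof of Proposition \ref{identify}: for simple $A$-modules $X,Y$ with $X^\alpha\neq 0$, $Y^\beta\neq 0$,
$$\<X^\alpha,Y^\beta\>=\<a_{1\otimes X^\alpha},a_{1\otimes Y^\beta}\>\leq\<X\boxtimes_A a_{\alpha'\otimes 1},Y\boxtimes_A a_{\beta'\otimes 1}\>=\sum_{\epsilon\in J}N_{\alpha'\beta}^{\epsilon}N_{X'Y}^{M^k}$$
where now the sum is over $\epsilon\in J$ by Remark \ref{r4.10} and $M^k=a_{\epsilon\otimes 1}$. The decisive observation is the same adjunction
$$N_{X',Y}^{M^k}=\dim\Hom_A(X'\boxtimes_A Y,M^k)=\dim\Hom_A(Y,X\boxtimes_A M^k)=0,$$
which vanishes because $X$ and $Y$ lie in distinct equivalence classes under the $\DD$-action: if this Hom were nonzero, then $Y$ would be a summand of $X\boxtimes_A a_{\epsilon\otimes 1}$ with $a_{\epsilon\otimes 1}\in\Or(\DD)$, contradicting inequivalence. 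Thus $\<X^\alpha,Y^\beta\>=0$ and $X_{\CC_2}\cap Y_{\CC_2}=\emptyset$.

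The main subtlety to watch is not a computational one but a bookkeeping one: I must ensure that the vanishing of $N_{X',Y}^{M^k}$ uses inequivalence with respect to the \emph{$\DD$-action} (whose simple objects are all $a_{\epsilon\otimes 1}$, $\epsilon\in J$) and not the narrower $\CC^{kw}$-action from Proposition \ref{identify}. Since the equivalence relation in the statement is defined via $\DD$, the element $a_{\epsilon\otimes 1}$ appearing in $X\boxtimes_A M^k$ is automatically an object of $\DD$, so inequivalence indeed forces the Hom-space to be zero. This consistency between the indexing set of the fusion sum and the category defining the equivalence relation is the only point where care is required; everything else transfers directly from the proof of Proposition \ref{identify}.
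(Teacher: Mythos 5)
Your proposal is correct and matches the paper's intent exactly: the paper itself gives no separate argument for Proposition \ref{identify1} but simply asserts that it follows from the proof of Proposition \ref{identify} together with Remarks \ref{r4.8} and \ref{r4.10}, which is precisely the substitution of $\DD$ for $\CC^{kw}$ (so the fusion sum runs over all $\epsilon\in J$) that you carry out. Your closing remark about matching the indexing set of the fusion sum to the category defining the equivalence relation is the right point of care, and it is handled correctly.
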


\begin{cor}\label{key} Let $X,Y\in \Or((\CC_1\otimes \CC_2)_A).$ Then either $X_{\CC_1}=Y_{\CC_1}$ or $X_{\CC_1} \cap Y_{\CC_1}=\emptyset.$  
\end{cor}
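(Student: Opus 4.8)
The plan is to deduce this from Proposition \ref{identify1} by exploiting the symmetry between the two tensor factors. The entire development of Sections 4--6 rests only on the standing assumptions (1) and (2), together with the fact that $A$ is a regular commutative algebra in $\CC_1\otimes\CC_2.$ The canonical braided equivalence $\CC_1\otimes\CC_2\cong\CC_2\otimes\CC_1$ (the swap functor) carries $A$ to a regular commutative algebra in $\CC_2\otimes\CC_1$ with the same local module category $\CC,$ and it interchanges the decomposition of each $M^i$ into $\CC_1\otimes\CC_2$-objects with its decomposition into $\CC_2\otimes\CC_1$-objects, hence interchanges the roles of $X_{\CC_1}$ and $X_{\CC_2}.$ Thus, once I check that assumptions (1) and (2) survive the swap, Proposition \ref{identify1} read for the swapped pair $(\CC_2,\CC_1)$ yields exactly the dichotomy for $X_{\CC_1}$ claimed here.

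The one substantive point is verifying assumption (2) for the swapped data. Writing $A=\bigoplus_{\alpha\in J_1}W^\alpha\otimes (1,\alpha)$ and expanding each $\CC_2$-factor as $(1,\alpha)=\bigoplus_{\phi\in K}\<N^\phi,(1,\alpha)\>N^\phi,$ I would regroup
\[
A=\bigoplus_{\phi\in K}N^\phi\otimes\Big(\bigoplus_{\alpha\in J_1}\<N^\phi,(1,\alpha)\>\,W^\alpha\Big),
\]
so the $\CC_1$-valued multiplicity space attached to $N^\phi$ is $\widetilde M^{(1,\phi)}=\bigoplus_{\alpha\in J_1}\<N^\phi,(1,\alpha)\>W^\alpha.$ For $\phi=1$ the hypothesis $\<1_{\CC_2},(1,\alpha)\>=\delta_{1,\alpha}$ forces $\widetilde M^{(1,1)}=W^1=1_{\CC_1},$ and for general $\phi$ the hypothesis $(1,1)=1_{\CC_2}$ gives $\<1_{\CC_1},\widetilde M^{(1,\phi)}\>=\<N^\phi,(1,1)\>=\<N^\phi,1_{\CC_2}\>=\delta_{1,\phi}.$ Hence the swapped data satisfy the analogue of assumption (2), while assumption (1) (pseudo-unitarity) is symmetric on the nose.

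With both assumptions in place, I would invoke Proposition \ref{identify1} for $(\CC_2,\CC_1)$ relative to the equivalence relation attached to the subcategory generated by the $a_{1\otimes\phi}$ (the $\CC_2$-analogue of $\DD,$ braided equivalent to $\CC_2$): equivalent $X,Y\in\Or((\CC_1\otimes\CC_2)_A)$ give $X_{\CC_1}=Y_{\CC_1},$ and inequivalent ones give $X_{\CC_1}\cap Y_{\CC_1}=\emptyset.$ Combining the two cases yields the stated alternative. The main obstacle, and the step I would write out carefully, is confirming that the machinery feeding Proposition \ref{identify1}---notably Lemma \ref{kwl2}, the counting identity of Proposition \ref{p3.6}, and Remarks \ref{r4.8} and \ref{r4.10}---is genuinely symmetric in the two factors, so that each may be reread verbatim after the swap; granting this, the conclusion is immediate.
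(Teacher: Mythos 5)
Your proposal is correct and takes essentially the same route as the paper: the paper's own proof of this corollary simply invokes Proposition \ref{identify1} and asserts that ``the same conclusion holds if we replace $\CC_2$ by $\CC_1$,'' i.e.\ exactly the factor-swap symmetry you describe. Your additional verification that the standing assumptions (in particular assumption (2)) survive the swap $\CC_1\otimes\CC_2\cong\CC_2\otimes\CC_1$ is a detail the paper leaves implicit, and it is carried out correctly.
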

\begin{proof} From Proposition \ref{identify1} we know that either the simple objects of $\CC_2$ in $X$ and in $Y$ are exactly the same, or there is no intersection between the sets of simple objects of $\CC_2$ appearing in $X$ and $Y.$ It is clear that the same conclusion holds if we replace $\CC_2$ by $\CC_1.$
	\end{proof}

\begin{lem}\label{al5.5}  Let $i\in I$ and $\alpha\in J_i.$ Then 
	$$a_{1\otimes (i,\alpha)}=\sum_{\gamma\in J_i, \beta\in J_1}N_{\gamma \alpha'}^\beta \beta\otimes (i,\gamma)=\sum_{\beta\in J_1}{\beta}\otimes(\sum_{\gamma\in J_i}N_{\gamma \alpha'}^\beta(i,\gamma))=\sum_{\beta\in J_1}{\beta}\otimes(\sum_{\gamma\in J_i}N_{\beta \alpha}^\gamma(i,\gamma))$$
	is a direct summand of $M^i\boxtimes_Aa_{\alpha'\otimes 1}$ and $(1,\beta)\boxtimes (i,\alpha)=\sum_{\gamma\in J_i}N_{\beta \alpha}^\gamma(i,\gamma)$ for $\beta\in J_1.$ 
\end{lem}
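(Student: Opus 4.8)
The plan is to pin down the underlying object of $a_{1\otimes(i,\alpha)}$ in $\CC_1\otimes\CC_2$ directly and then reduce the whole statement to a single fusion identity in $\CC_2$. Since $A=M^1=\bigoplus_{\beta\in J_1}\beta\otimes(1,\beta)$, the same componentwise expansion of $\boxtimes_{\CC_1\otimes\CC_2}$ used in the proof of Lemma \ref{a} gives
$$a_{1\otimes(i,\alpha)}=A\boxtimes_{\CC_1\otimes\CC_2}(1_{\CC_1}\otimes(i,\alpha))=\bigoplus_{\beta\in J_1}\beta\otimes\big((1,\beta)\boxtimes(i,\alpha)\big).$$
Granting the fusion identity $(1,\beta)\boxtimes(i,\alpha)=\sum_{\gamma\in J_i}N_{\beta\alpha}^\gamma(i,\gamma)$ for $\beta\in J_1$, the three displayed expressions for $a_{1\otimes(i,\alpha)}$ follow by purely formal rearrangement together with the reciprocity $N_{\beta\alpha}^\gamma=N_{\gamma\alpha'}^\beta$ (immediate from $\dim\Hom(\beta\boxtimes\alpha,\gamma)=\dim\Hom(\beta,\gamma\boxtimes\alpha')$), while the statement that $a_{1\otimes(i,\alpha)}$ is a direct summand of $M^i\boxtimes_A a_{\alpha'\otimes1}$ is exactly the first assertion of Lemma \ref{kwl2}. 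So the entire content is the fusion identity, and its last clause is just the $W^\beta$-component of the first display.

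To prove the fusion identity I would first extract the ``summand'' half from Lemma \ref{kwl2}. Using the projection formula $M^i\boxtimes_A a_{\alpha'\otimes1}\cong M^i\boxtimes_{\CC_1\otimes\CC_2}(\alpha'\otimes1_{\CC_2})$ (already invoked inside the proof of Lemma \ref{kwl2}) and $M^i=\bigoplus_{\gamma\in J_i}\gamma\otimes(i,\gamma)$, the underlying object of $M^i\boxtimes_A a_{\alpha'\otimes1}$ is $\bigoplus_{\gamma\in J_i}(\gamma\boxtimes\alpha')\otimes(i,\gamma)$, whose $W^\beta$-isotypic component for $\beta\in J_1$ equals $\beta\otimes\big(\bigoplus_{\gamma\in J_i}N_{\gamma\alpha'}^\beta(i,\gamma)\big)$. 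Comparing this with the $W^\beta$-component of the first display, and using that $a_{1\otimes(i,\alpha)}$ is a direct summand (Lemma \ref{kwl2}), I conclude that $(1,\beta)\boxtimes(i,\alpha)$ is a direct summand of $\bigoplus_{\gamma\in J_i}N_{\gamma\alpha'}^\beta(i,\gamma)=\bigoplus_{\gamma\in J_i}N_{\beta\alpha}^\gamma(i,\gamma)$ in $\CC_2$.

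The main step, and the hard part, is to upgrade this summand relation to an equality; by pseudo unitarity (a nonzero object has dimension $\geq1$) it suffices to match dimensions, i.e. to show
$$\sum_{\gamma\in J_i}N_{\beta\alpha}^\gamma\,d_{(i,\gamma)}=d_\beta\,d_{(i,\alpha)},$$
where the left side uses additivity of $\dim$ and the right side uses $d_{(1,\beta)}=d_\beta$ from Theorem \ref{lin}(3) and multiplicativity of $\dim$ under $\boxtimes$. I would derive this from the dimension formula of Proposition \ref{p4.3}: substituting $d_{(i,\gamma)}=b(i,\gamma)/b(1,1)$ with $b(i,\gamma)=\sum_{\epsilon\in\KW}\overline{\dot{s}_{\gamma\epsilon}}\,s_{ij}$ (where $M^j=a_{\epsilon\otimes1}$) and interchanging the sums reduces the left side to $\tfrac{1}{b(1,1)}\sum_{\epsilon\in\KW}s_{ij}\sum_{\gamma\in J_i}N_{\beta\alpha}^\gamma\overline{\dot{s}_{\gamma\epsilon}}$. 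By Lemma \ref{fusion} all fusion products $\beta\boxtimes\gamma$ stay in $J_i$, so the inner sum may be taken over all of $J$, and the Verlinde identity gives $\sum_{\gamma}N_{\beta\alpha}^\gamma\dot{s}_{\gamma\epsilon}=\dot{s}_{\beta\epsilon}\dot{s}_{\alpha\epsilon}/\dot{s}_{1\epsilon}$. The decisive input is Proposition \ref{characwk}: every $\epsilon\in\KW$ lies in the M\"uger centralizer $C_{\CC_1}(\DD)$, so the double braiding of $\beta\in J_1$ with $\epsilon$ is trivial and hence $\dot{s}_{\beta\epsilon}/\dot{s}_{1\epsilon}=d_\beta$. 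This collapses the inner sum to $d_\beta\overline{\dot{s}_{\alpha\epsilon}}$ and the whole left side to $d_\beta\,b(i,\alpha)/b(1,1)=d_\beta\,d_{(i,\alpha)}$, as required; the summand relation of the previous paragraph then forces the fusion identity, and assembling the pieces above completes the proof.
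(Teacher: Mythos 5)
Your argument is correct, but it takes a genuinely different route from the paper's. Your reduction is the same at the start (expand $a_{1\otimes (i,\alpha)}=\oplus_{\beta\in J_1}\beta\otimes((1,\beta)\boxtimes (i,\alpha))$, get the summand relation from Lemma \ref{kwl2}, and observe that everything follows from the single fusion identity), but you close the gap by a dimension count: Proposition \ref{p4.3} plus the Verlinde formula plus the fact that every $\epsilon\in \KW$ centralizes $J_1$ (Proposition \ref{characwk}), which collapses $\sum_{\gamma}N_{\beta\alpha}^{\gamma}\overline{\dot{s}_{\gamma\epsilon}}$ to $d_\beta\overline{\dot{s}_{\alpha\epsilon}}$ and gives $\sum_{\gamma\in J_i}N_{\beta\alpha}^{\gamma}d_{(i,\gamma)}=d_\beta d_{(i,\alpha)}$; pseudo-unitarity then upgrades the summand relation to equality. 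The paper instead argues structurally with no $S$-matrices at all: it isolates the part $X$ of $M^i\boxtimes_A a_{\alpha'\otimes 1}$ supported on $J_1$ in the first tensor factor, notes that $X$ is an $A$-submodule because $\FF^1$ is closed under fusion (Theorem \ref{lin}), and then uses the support dichotomy of Corollary \ref{key} to show that any simple $A$-summand of $X$ beyond $\oplus_s a_{1\otimes Z_s}$ would be forced to contain $1_{\CC_1}$ in its $\CC_1$-support, which is impossible since the full $1_{\CC_1}$-multiplicity $(i,\alpha)$ is already exhausted; hence $X=a_{1\otimes(i,\alpha)}$. The trade-off: the paper's proof uses only Lemmas \ref{a}, \ref{kwl2} and Corollary \ref{key}, whereas yours imports the Section 5 machinery and, via Proposition \ref{characwk}, its extra hypothesis $(\theta^1\otimes\theta^2)_A=\id$ --- harmless in practice, since the modularity of $\CC$ and the $S$-matrix relation underlying Proposition \ref{p4.3} already presuppose it, but you should say so explicitly. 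In return, your computation delivers the eigenvalue identity $\sum_{\gamma}N_{\beta\alpha}^{\gamma}d_{(i,\gamma)}=d_\beta d_{(i,\alpha)}$ at this stage, which the paper only extracts later, as the first step in the proof of Theorem \ref{mtheorem1}.
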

\begin{proof} From Lemma \ref{kwl2} we know that $a_{1\otimes (i,\alpha)}$ is a direct summand of $M^i\boxtimes_Aa_{\alpha'\otimes 1}.$ Note that 
$$M^i\boxtimes_A a_{\alpha'\otimes 1}=M^i\boxtimes_{\CC_1\otimes\CC_2}(\alpha'\otimes 1)=\sum_{\gamma\in J_i, \beta\in J_1}N_{\gamma \alpha'}^\beta \beta\otimes (i,\gamma)\oplus\sum_{\gamma\in J_i, \beta\notin J_1}N_{\gamma \alpha'}^\beta \beta\otimes (i,\gamma).$$
Set 
		$$X=\sum_{\gamma\in J_i, \beta\in J_1}N_{\gamma \alpha'}^\beta \beta\otimes (i,\gamma).$$
		Then there are nonnegative integers $n_{\beta}$ and $U_{\beta}\in \CC_2$ for $1\ne \beta\in J_1$ such that 
		$$X=1_{\CC_1}\otimes (i,\alpha)+\sum_{1\ne \beta\in J_1}n_{\beta}\beta\otimes U_{\beta}.$$
Since $\cal  F^1$ is a braided fusion category by Theorem \ref{lin}, we see that $X\in (\CC_1\otimes \CC_2)_A$ is an $A$-module which is a direct sum of simple $A$-modules. Let  $(i,\alpha)=\oplus_{s=1}^t Z_s$ be a direct sum of simple objects in $\CC_2.$ It follows from Lemma \ref{a} that $a_{1\otimes Z_s}$ is a simple $A$-module.
We claim that $X=\oplus_{s=1}^ta_{1\otimes Z_s}.$ Clearly, $\oplus_{s=1}^ta_{1\otimes Z_s}$ is an $A$-submodule of $X.$ If $X\ne \oplus_{s=1}^ta_{1\otimes Z_s}$  we take an arbitrary simple $A$-submodule $N$ of $X$ such that 
$N\cap \oplus_{s=1}^ta_{1\otimes Z_s}=\{0\}.$ Clearly, $A_{\CC_1}\cap N_{\CC_1}\ne \emptyset.$ By Corollary \ref{key}, $A_{\CC_1}= N_{\CC_1}.$ But from the discussion above,  we see that $1_{\CC_1}\in A_{\CC_1}$ and $1_{\CC_1}\notin N_{\CC_1}.$  This is a contradiction. On the other hand, $a_{1\otimes(i,\alpha)}=a_{1\otimes (\oplus_{s=1}^t Z_i)}=\oplus_{s=1}^ta_{1\otimes Z_s}.$
  Thus 
   $X=a_{1\otimes (i,\alpha)},$ as desired.

The equality $(1,\beta)\boxtimes (i,\alpha)=\sum_{\gamma\in J_i}N_{\gamma \alpha'}^\beta(i,\gamma)$ for $\beta\in J_1,$  follows from the comparison 
$$a_{1\otimes(i,\alpha)}
=\sum_{\beta\in J_1}\beta\otimes(\sum_{\gamma\in J_i}N_{\gamma \alpha'}^\beta(i,\gamma))$$
with 
$$a_{1\otimes(i,\alpha)}=A\boxtimes_{\CC_1 \otimes \CC_2}  (1_{\CC_1}\otimes (i,\alpha))=
\oplus_{\beta\in J_1} {\beta}\otimes ((1,\beta)\otimes {(i,\alpha)}).$$
So it suffices to show that $N_{\gamma \alpha'}^\beta=N_{\beta\alpha}^\gamma$ for any $\beta,\alpha,\gamma\in \CC_1.$ Since 
$$\beta\boxtimes \alpha=\sum_{\gamma\in\CC_1}N_{\beta\alpha}^\gamma\gamma$$
and
$$\sum_{\gamma\in\CC_1}N_{\beta'\alpha'}^\gamma\gamma=\beta'\boxtimes \alpha'=\alpha'\boxtimes \beta'=
(\beta\boxtimes \alpha)'=\sum_{\gamma\in\CC_1}N_{\beta\alpha}^{\gamma}\gamma'$$
we have $N_{\beta\alpha}^{\gamma}=N_{\beta'\alpha'}^{\gamma'}=N_{\gamma\alpha'}^{\beta}.$ \end{proof}

Recall Proposition \ref{p4.3}. Now we can give another  formula of $d_{(i,\alpha)}$ without using $S$-matrices for $i\in I$ and $\alpha\in J_i$
\begin{thm}\label{mtheorem1} Let $i\in I.$  Then $d_{(i,\alpha)}=c_i d_id_{\alpha}$ for all $\alpha\in J_i$ where $c_i=\frac{\sum_{\beta\in J_1}d_{\beta}^2}{\sum_{\gamma\in J_i}d_{\gamma}^2}\leq 1.$
		\end{thm}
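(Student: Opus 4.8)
The plan is to pin down the whole dimension vector $(d_{(i,\gamma)})_{\gamma\in J_i}$ by recognizing it as a Perron--Frobenius eigenvector and then fixing its scale. First I would read off from Lemma~\ref{al5.5} the fusion rule $(1,\beta)\boxtimes(i,\alpha)=\sum_{\gamma\in J_i}N_{\beta\alpha}^\gamma(i,\gamma)$ for $\beta\in J_1$ and take categorical dimensions; using $d_{(1,\beta)}=d_\beta$ from Theorem~\ref{lin}(3) this yields $d_\beta\,d_{(i,\alpha)}=\sum_{\gamma\in J_i}N_{\beta\alpha}^\gamma d_{(i,\gamma)}$. Hence the strictly positive vector $u=(d_{(i,\gamma)})_{\gamma\in J_i}$ is a common eigenvector of the matrices $N_\beta=(N_{\beta\alpha}^\gamma)_{\gamma,\alpha}$, $\beta\in J_1$, with eigenvalue $d_\beta$. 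On the other hand, applying $\dim$ to the $\CC_1$-fusion rule $W^\beta\boxtimes W^\alpha=\sum_{\gamma\in J_i}N_{\beta\alpha}^\gamma W^\gamma$ of Lemma~\ref{fusion} shows that the positive vector $v=(d_\gamma)_{\gamma\in J_i}$ is a common eigenvector of the same matrices with the same eigenvalues $d_\beta$.

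The key step is to conclude that $u$ and $v$ are proportional. Since $\FF_i$ is an indecomposable module category over $\FF^1$ by Lemma~\ref{fusion}, the nonnegative matrix $N=\sum_{\beta\in J_1}N_\beta$ is irreducible: the graph on $J_i$ that links $\alpha$ to $\gamma$ whenever some $N_{\beta\alpha}^\gamma\neq 0$ is connected by indecomposability, and closure of $\FF^1$ under duals ($N_{\beta\alpha}^\gamma\neq 0\iff N_{\beta'\gamma}^\alpha\neq 0$) makes it strongly connected. A positive eigenvector of an irreducible nonnegative matrix necessarily belongs to its Perron root, whose eigenspace is one dimensional. As $Nu=\bigl(\sum_{\beta\in J_1}d_\beta\bigr)u$ and $Nv=\bigl(\sum_{\beta\in J_1}d_\beta\bigr)v$ with $u,v>0$, both lie in this eigenspace, so $d_{(i,\gamma)}=\lambda_i d_\gamma$ for all $\gamma\in J_i$ and some $\lambda_i>0$.

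To fix $\lambda_i$ I would use the restriction (forgetful) functor $G\colon\CC\to\CC_1\otimes\CC_2$. Since $G$ scales dimensions by $\dim A$ --- on free modules $\dim_{\CC_1\otimes\CC_2}G(A\boxtimes X)=\dim A\cdot\dim_{(\CC_1\otimes\CC_2)_A}(A\boxtimes X)$, and this extends additively to every module (each is a summand of a free one) --- one gets $\sum_{\alpha\in J_i}d_\alpha d_{(i,\alpha)}=\dim_{\CC_1\otimes\CC_2}G(M^i)=\dim A\cdot d_i$, where $\dim_{(\CC_1\otimes\CC_2)_A}M^i=\dim_\CC M^i=d_i$ as $\CC$ is a full subcategory. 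Here $\dim A=\sum_{\beta\in J_1}d_\beta^2$, because $A=M^1=\bigoplus_{\beta\in J_1}W^\beta\otimes(1,\beta)$ with $d_{(1,\beta)}=d_\beta$. Substituting $d_{(i,\alpha)}=\lambda_i d_\alpha$ gives $\lambda_i\sum_{\alpha\in J_i}d_\alpha^2=\bigl(\sum_{\beta\in J_1}d_\beta^2\bigr)d_i$, whence $\lambda_i=c_i d_i$ and therefore $d_{(i,\alpha)}=c_i d_i d_\alpha$.

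Finally, for $c_i\le 1$ I would invoke Lemma~\ref{kwl2}: $a_{1\otimes(i,\alpha)}$ is a direct summand of $M^i\boxtimes_A a_{\alpha'\otimes 1}$ in the fusion category $(\CC_1\otimes\CC_2)_A$, where dimensions are positive and multiplicative under $\boxtimes_A$. Using $\dim a_{1\otimes(i,\alpha)}=d_{(i,\alpha)}$, $\dim M^i=d_i$, and $\dim a_{\alpha'\otimes 1}=d_{\alpha'}=d_\alpha$, this yields $d_{(i,\alpha)}\le d_i d_\alpha$, which combined with $d_{(i,\alpha)}=c_i d_i d_\alpha$ forces $c_i\le 1$. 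The main obstacle I anticipate is the rigorous justification of the two dimension-bookkeeping facts --- the uniqueness of the positive common eigenvector (which rests on translating indecomposability of $\FF_i$ into irreducibility of $N$ and applying Perron--Frobenius) and the precise scaling of categorical dimensions under $G$ by the factor $\dim A$; the remaining manipulations are routine.
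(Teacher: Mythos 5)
Your proof is correct and follows essentially the same route as the paper: both extract the eigenvalue equations $N_\beta\mathbf{d}=d_\beta\mathbf{d}$ and $N_\beta\mathbf{e}=d_\beta\mathbf{e}$ from Lemma \ref{al5.5} and Lemma \ref{fusion}, apply Perron--Frobenius to $N=\sum_{\beta\in J_1}N_\beta$ to get proportionality, normalize via $\sum_{\alpha\in J_i}d_\alpha d_{(i,\alpha)}=(\dim A)\,d_i$, and deduce $c_i\le 1$ from Lemma \ref{kwl2}. The only (immaterial) divergence is in justifying uniqueness of the positive eigenvector: you derive irreducibility of $N$ from indecomposability of $\FF_i$ as an $\FF^1$-module category, whereas the paper shows $N$ is strictly positive directly from the simplicity of $M^i$ as an $A$-module.
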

\begin{proof} From Lemma \ref{al5.5} we have identity
	$$d_{(1,\beta)}d_{(i,\alpha)}=\sum_{\gamma\in J_i}N_{\beta\alpha}^{\gamma}d_{(i,\gamma)}.$$
	Using Theorem  \ref{lin} gives $d_\beta=d_{(1,\beta)}.$ So we have identity
		$$d_\beta d_{(i,\alpha)}=\sum_{\gamma\in J_i}N_{\beta\alpha}^{\gamma}d_{(i,\gamma)}.$$
		Let $N_\beta$ be a $|J_i|\times |J_i|$ matrix such that $(N_\beta)_{\alpha \gamma}=N_{\beta,\alpha}^{\gamma}.$ Let ${\bf d}$ be a vector in
		$\R^{|J_i|}$ with components $d_{(i,\alpha)}.$ Then 
		$$N_\beta {\bf d}=d_\beta {\bf d}$$
		and ${\bf d}$ is a common eigenvectors for all $N_{\beta}$ with eigenvalues $d_{\beta}.$
		
		On the other hand,  we also have 
			$$d_\beta d_\alpha=\sum_{\gamma\in J_i}N_{\beta\alpha}^{\gamma}d_\gamma,$$
			$$N_\beta{\bf e}=d_{\beta}{\bf e}$$
for all $\beta$ where ${\bf e}\in \R^{|J_i|}$ with components $d_{\alpha}.$ 

Set $N=\sum_{\beta\in J_1}N_\beta.$ We claim that $N_{\alpha \gamma}=\sum_{\beta\in J_1}(N_{\beta})_{\alpha \gamma}>0$ for all $\alpha, \gamma\in J_i.$ That is, $N$ is a strictly positive matrix. Equivalently, for any $\alpha, \gamma\in J_i$ there exists $\beta\in J_1$ such that $(N_{\beta})_{\alpha \gamma}=N_{\beta\alpha}^{\gamma}>0.$ This essentially follows from the fact that $M^i$ is a simple $A$-module. Consider
$A$-module 
$$A\boxtimes (\alpha\otimes (i,\alpha))=\oplus_{\beta\in J_1}(\beta\boxtimes \alpha)\otimes ((1,\beta)\boxtimes (i,\alpha)).$$
Since  $\Hom_A(A\boxtimes (\alpha\otimes (i,\alpha)), M^i)=\Hom_{\CC_1\otimes\CC_2}( \alpha\otimes (i,\alpha), M^i)$ is nonzero, $M^i$ is a summand of $A\boxtimes (\alpha\otimes (i,\alpha)).$ So there exists $\beta\in J_1$ such that $N_{\beta\alpha}^{\gamma}$ is nonzero.

Since the entries of $N$ are positive and both ${\bf d}$ and ${\bf e}$ are eigenvectors of $N$ with positive components and with eigenvalue
$\sum_{\beta\in J_1}d_{\beta},$ from the well-known Frobenius-Perron Theorem 
	that there exists a positive number $c_i$ such that ${\bf d}=c_id_i{\bf e}.$ That is, $d_{(i,\alpha)}=c_id_id_{\alpha}$ for all $\alpha\in J_i.$ Note that 
	$$d_i=\frac{\sum_{\alpha\in J_i}d_{\alpha}d_{(i,\alpha)}}{\sum_{\beta\in J_1}d_{\beta}d_{(1,\beta)}}=\frac{\sum_{\alpha\in J_i}c_id_id_{\alpha}^2}{\sum_{\beta\in J_1}d_{\beta}^2}.$$
So $c_i=\frac{\sum_{\beta\in J_1}d_{\beta}^2}{\sum_{\gamma\in J_i}d_{\gamma}^2}.$ By Lemma \ref{kwl2}, $a_{1\otimes (i,\alpha)}$ is a submodule of $M^i\boxtimes a_{\alpha'\otimes 1}.$ Thus $\d_{(i,\alpha)}\leq \d_i\d_\alpha$ and  $c_i\leq 1,$ proof is complete.
	\end{proof}

\begin{thm}\label{KWd} The following are equivalent:
	
	(1)  $\KW$  forms a group,
	
	(2) $\d_{(i,\alpha)}=\d_i\d_{\alpha}$ for 
	all $i\in I, \alpha\in J_i,$

(3)  For any $i\in I,$ $c_i=\frac{\sum_{\beta\in J_1}d_{\beta}^2}{\sum_{\gamma\in J_i}d_{\gamma}^2}=1,$

(4) For any $\alpha\in J,$ $W^{\alpha'}\boxtimes W^{\alpha}=\sum_{\gamma\in J_1}N_{\alpha,\alpha'}^{\gamma}W^{\gamma}.$ 
\end{thm}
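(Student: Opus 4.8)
The plan is to prove the cycle $(3)\Rightarrow(1)\Rightarrow(4)\Rightarrow(3)$ together with the separate equivalence $(2)\Leftrightarrow(3)$. The latter is immediate from Theorem~\ref{mtheorem1}: there $d_{(i,\alpha)}=c_id_id_\alpha$ with $c_i\le1$, so demanding $d_{(i,\alpha)}=d_id_\alpha$ for all admissible $(i,\alpha)$ is by definition $c_i=1$ for every $i$. I record the bookkeeping used repeatedly: writing $\FF^1=\DD=\{W^\beta\mid\beta\in J_1\}$, Theorem~\ref{lin} gives $d_{(1,\beta)}=d_\beta$, hence $\dim A=\FPdim(\FF^1)=\sum_{\beta\in J_1}d_\beta^2$, and $c_i=\dim A/\FPdim(\FF_i)$ with $\FPdim(\FF_i):=\sum_{\gamma\in J_i}d_\gamma^2$. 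By Lemma~\ref{fusion} each $J_i$ is a single orbit for the fusion action of $\FF^1$ on $\Or(\CC_1)$, and by Corollary~\ref{key} distinct $J_i$ are disjoint.

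To relate $\KW$ to the fusion condition $(4)$ I use M\"uger's centralizer calculus; here I assume $(\theta^1\otimes\theta^2)_A=\id$, so that Proposition~\ref{characwk} identifies $\KW=C_{\CC_1}(\FF^1)$. In a modular category $C_{\CC_1}(-)$ is an order-reversing involution of the lattice of fusion subcategories with $C_{\CC_1}((\CC_1)_{ad})=(\CC_1)_{pt}$, the adjoint and maximal pointed subcategories \cite{EGNO, DMNO}. Condition $(4)$ states exactly that $W^{\alpha'}\boxtimes W^\alpha\in\FF^1$ for every $\alpha$, i.e. $(\CC_1)_{ad}\subseteq\FF^1$; applying $C_{\CC_1}$ turns this into $\KW=C_{\CC_1}(\FF^1)\subseteq(\CC_1)_{pt}$, i.e. $\KW$ is pointed. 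Since $\KW$ is closed under fusion and duals (Lemmas~\ref{kwl1} and~\ref{sigma}), being pointed is the same as being a group, so this gives $(1)\Leftrightarrow(4)$ and in particular $(1)\Rightarrow(4)$.

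For $(4)\Rightarrow(3)$ I exploit that $(\CC_1)_{ad}\subseteq\FF^1$ makes $\FF^1$ a union of components of the universal grading of $\CC_1$, so $\CC_1$ is faithfully graded by a finite group with trivial component $\FF^1$ \cite{EGNO}. In a faithfully graded fusion category all homogeneous components share the Frobenius--Perron dimension $\FPdim(\FF^1)$, and each component is an indecomposable $\FF^1$-module category, hence coincides with one of the orbits $J_i$ of Lemma~\ref{fusion}. Therefore $\FPdim(\FF_i)=\FPdim(\FF^1)=\dim A$ and $c_i=1$ for every $i$, which is $(3)$.

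The hardest step is $(3)\Rightarrow(1)$, which I treat by a global dimension count. M\"uger's formula for modular $\CC_1$ gives $\FPdim(\KW)=\FPdim(C_{\CC_1}(\FF^1))=\FPdim(\CC_1)/\FPdim(\FF^1)=\FPdim(\CC_1)/\dim A$. Under $(3)$ every orbit has $\FPdim(\FF_i)=\dim A$, so summing over the partition of $\Or(\CC_1)$ yields $(\#\ \FF^1\text{-orbits})\cdot\dim A=\FPdim(\CC_1)$, i.e. the number of orbits equals $\FPdim(\CC_1)/\dim A=\FPdim(\KW)$. As $(1)$ is equivalent to $|\KW|=\FPdim(\KW)$, the implication reduces to the orbit--centralizer identity
\[
\#\{\FF^1\text{-orbits on }\Or(\CC_1)\}=|\Or(C_{\CC_1}(\FF^1))|=|\KW|,
\]
which I expect to be the main obstacle. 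I would prove it through the Verlinde idempotents $e_\mu$ of $K(\CC_1)\otimes\C$: the $\FF^1$-action scales $e_\mu$ by $\tilde{\dot s}_{\beta\mu}/d_\mu$, while $W^\gamma\in C_{\CC_1}(\FF^1)$ iff $\tilde{\dot s}_{\gamma\beta}=d_\gamma d_\beta$ for all $\beta\in J_1$, and unitarity of $\dot s$ equates the number of orbit-constant functions with the rank of the centralizer. Two points I must settle first are that every $\FF^1$-orbit actually occurs as some $J_i$, so that $(3)$ controls all orbits and not only those carried by local modules (this parallels the surjectivity argument of Lemma~\ref{s-matrix1}), and that the count is insensitive to a possibly degenerate $\FF^1$. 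Granting the displayed identity, the two dimension computations force $|\KW|=\FPdim(\KW)$, so $\KW$ is pointed and $(1)$ holds, closing the cycle.
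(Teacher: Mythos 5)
Your proposal is correct in outline but follows a genuinely different route from the paper on three of the four implications. The paper also gets $(2)\Leftrightarrow(3)$ from Theorem \ref{mtheorem1}, but it then proves $(2)\Rightarrow(1)$ by combining the dimension formula $d_{(i,\alpha)}=b(i,\alpha)/b(1,1)$ of Proposition \ref{p4.3} with the elementary bounds $|s_{ij}|\le d_is_{1j}$, $|\dot s_{\alpha\beta}|\le d_\alpha\dot s_{1\beta}$ and unitarity of $\dot s$ to force $d_\beta=1$ on $\KW$; it proves $(1)\Rightarrow(2)$ by an explicit twist computation showing $s_{ij}\overline{\dot s_{\alpha\beta}}>0$ when $\beta\in\KW$ is a simple current (a positivity statement reused in Section 7); and it proves $(2)\Leftrightarrow(4)$ by identifying $a_{1\otimes(i,\alpha)}$ with $M^i\boxtimes_A a_{\alpha'\otimes1}$ via Lemmas \ref{kwl2}, \ref{al5.5} and Corollary \ref{key}. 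You instead read $(4)$ as $(\CC_1)_{ad}\subseteq\FF^1$ and $\KW$ as $\Or(C_{\CC_1}(\FF^1))$ (Proposition \ref{characwk}), so that $(1)\Leftrightarrow(4)$ drops out of M\"uger's double centralizer theorem together with $C_{\CC_1}((\CC_1)_{ad})=(\CC_1)_{pt}$; you get $(4)\Rightarrow(3)$ from the universal grading, and $(3)\Rightarrow(1)$ from $\FPdim(\KW)=\FPdim(\CC_1)/\dim A$ plus the orbit--centralizer count. Your approach is more structural and arguably explains the theorem, but it imports external machinery, needs the hypothesis $(\theta^1\otimes\theta^2)_A=\id$ for Proposition \ref{characwk} (the paper uses this implicitly anyway through \cite{KO}), and leaves two supporting facts to be written out: that every $\FF^1$-orbit on $\Or(\CC_1)$ occurs as some $J_i$ (this follows from the $\CC_1$-analogue of Lemma \ref{s-matrix1}, which holds because assumption (2) of Section 4 is symmetric in $\CC_1$ and $\CC_2$ by Theorem \ref{lin}), and that the number of $\FF^1$-orbits equals $|\KW|$ (your Verlinde-idempotent sketch does work: each orbit block carries a one-dimensional Frobenius--Perron eigenspace by the positivity argument of Theorem \ref{mtheorem1}, and the simultaneous FP-eigenspace is spanned by the $e_\mu$ with $\mu\in C_{\CC_1}(\FF^1)$). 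The paper's argument, by contrast, is self-contained in the $S$-matrix formalism it has already developed and yields the positivity statement needed later for the Kac--Wakimoto hypothesis as a by-product.
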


\begin{proof}
	
	First, by Theorem \ref{mtheorem1}, (2) and (3) are equivalent.
	
	(2) $\Rightarrow$ (1): We first assume that $\d_{(i,\alpha)}=\d_i\d_{\alpha}$ for 
	all $i\in I, \alpha\in J_i.$ Recall from Proposition \ref{p4.3} that  $\d_{(i,\alpha)}=\frac{b{(i,\alpha)}}{b{(1,1)}}$,
	where $b{(i,\alpha)}=\sum_{\beta\in \KW} s_{ij}\overline{\dot{s}_{\alpha\beta}}$, 
	$b{(1,1)}=\sum_{\beta\in \KW} s_{1j}\overline{\dot{s}_{1\beta}}.$
	Note that $s_{1i}, \dot{s}_{1\beta}\in \BR_{+}$, and  $|s_{ij}|\leqslant \d_i s_{1j}$, 
	$|\dot{s}_{\alpha \beta}|\leqslant \d_{\alpha}\dot{s}_{1\beta}$.
	Then $$\d_{(i,\alpha)}\leqslant\frac{\sum_{\beta\in \KW} \d_i s_{1j}{\d_{\alpha}\dot{s}_{1\beta}}}{\sum_{\beta\in \KW} s_{1j}{\dot{s}_{1\beta}}}=\d_i \d_{\alpha}.$$
	Thus $|s_{ij}|=\d_i s_{1j}$.
	Moreover, $|s_{ij}|=\d_i s_{1j}=\d_j s_{1i}$, and $|\dot{s}_{\alpha \beta}|= \d_{\alpha}{\dot{s}}_{1\beta}=\d_{\beta}{\dot{s}}_{1\alpha}.$
	
	For $\beta\in \KW,$ we have $1=\sum_{\alpha} |s_{\alpha\beta}|^2=\d_\beta^2\sum_{\alpha} s_{1\alpha}^2=\d_\beta^2$ as $\dot s$ is unitary.  So $\d_\beta=1$ for all $\beta\in \KW$ and $\KW$  forms an abelian group. 
	
	(1) $\Rightarrow$ (2): We now assume that $\KW$ forms a group, equivalently, $\d_\beta=1$ for all $\beta\in \KW.$ So $\d_j=d_\beta=1$ where
 $M^j=a_{\beta\otimes1}.$
	Let $i\in I$ and $\alpha\in J_i.$ We claim that $s_{ij}\overline{\dot{s}_{\alpha\beta}}$ is positive.
	
	Let $k\in I,$ $\gamma\in J$ such that $M^i\boxtimes M^j\cong M^k,$ $W^\alpha\boxtimes W^\beta\cong W^\gamma.$ Then  
	$$s_{ij}=\frac{1}{D}\frac{\theta_k}{\theta_i\theta_j}\d_k=\frac{1}{D}\frac{\theta_k}{\theta_i\theta_j}\d_i$$
	where $\theta_i=\theta_{M^i},$ 
	and 
	$$\dot{s}_{\alpha\beta}=\frac{1}{D_1}\frac{\theta^1_\gamma}{\theta^1_\alpha\theta^1_\beta}\d_\alpha$$
	where $\theta^1_\alpha=\theta^1_{W^\alpha}.$  By \cite[Theorem 1.17]{KO}, we know that
	$\theta_i=(\theta^1\otimes \theta^2)_{M^i}$ is a constant. So $\theta_i=\theta^1_{\alpha}\theta^2_{(i,\alpha)},$
	$\theta_j=\theta^1_{\beta}\theta^2_{(j,\beta)}$ and $\theta_k=\theta^1_{\gamma}\theta^2_{(k,\gamma)}.$ 
 In particular, $\theta^2_{(i,\alpha)}$ is a constant.  Note that $M^j=a_{\beta\otimes 1}=\oplus_{\delta\in J_1}(\delta\boxtimes \beta)\otimes (1,\delta).$ Since $\beta$ is a simple current we see that $\delta\boxtimes \beta=\beta$ if and only if $\delta=1_{\CC_1}.$ This implies that $(j,\beta)=1_{\CC_2}$ and  
 $\theta_j=\theta^1_{\beta}.$ Furthermore, the relation $M^k=M^i\boxtimes (\beta\otimes 1)$ gives $(k,\gamma)=(i,\alpha).$  So  $\theta_k=\theta^1_{\gamma}\theta^2_{(i,\alpha)}.$  As $\theta$'s are roots of unity, we see that $s_{ij}\overline{\dot{s}_{\alpha\beta}}$ is positive.
	
	The  relation  $1=\sum_{i} |s_{ij}|^2\leq \d_j^2\sum_{i} s_{1i}^2=\d_j^2=1$ forces 
	$|s_{ij}|=s_{i1}\d_j=s_{j1}\d_i$ for all $i.$ Similarly, $|\dot{s}_{\alpha,\beta}|=\dot{s}_{\alpha1}\d_{\beta}=\dot{s}_{\beta1}\d_{\alpha}$ for
	all $\alpha\in J_i.$ Thus 
	$$\d_{(i,\alpha)}=\frac{b(i,\alpha)}{b(1,1)}=\frac{\sum_{\beta\in \KW}\overline{\dot{s}_{\alpha\beta}}s_{ij}}{\sum_{\beta\in \KW}\overline{\dot{s}_{1\beta}}s_{1j}}=\frac{\sum_{\beta\in \KW}{\dot{s}_{1\beta}}s_{j1}\d_i\d_{\alpha}}{\sum_{\beta\in \KW}{\dot{s}_{1\beta}}s_{j1}}=\d_i\d_\alpha,$$ 
	as expected. 
	
	(2) $\Rightarrow$ (4):  By Lemma \ref{s-matrix1}, there exists $i\in I$ such that $\alpha\in J_i.$  Note from Lemma \ref{kwl2} that
	$$a_{1\otimes (i,\alpha)}=M^i\boxtimes a_{\alpha'\otimes 1}=\sum_{\gamma\in J_i}({\gamma}\boxtimes {\alpha'})\otimes (i,\gamma).$$
	From the definition of $a_{1\otimes (i,\alpha)}$ we know that 
	$$a_{1\otimes (i,\alpha)}=\sum_{\beta\in J_1}\beta\otimes ((1,\beta)\boxtimes (i,\alpha)).$$
	(4) follows.
	
	(4) $\Rightarrow$ (2): Let $i\in I$ and $\alpha\in J_i.$ 
 Note that $M^i$ is a direct summand of $a_{\alpha\otimes (i,\alpha)}$ and $M^i\boxtimes a_{\alpha'\otimes 1}$ is a direct summand of $a_{\alpha\otimes (i,\alpha)}\boxtimes _A a_{\alpha'\otimes 1}\cong a_{(\alpha\boxtimes \alpha')\otimes (i,\alpha)}.$ Using the assumption that 
	${\alpha}\boxtimes {\alpha'}=\sum_{\beta\in J_1}N_{\alpha\alpha'}^\beta {\beta}$ asserts that if
 $N$ is an $A$-submodule of $a_{(\alpha\boxtimes \alpha')\otimes (i,\alpha)},$  then $N_{\CC_1}=J_1$ by Corollary \ref{key}. It is evident from $M^i\boxtimes a_{\alpha'\otimes 1}=\bigoplus_{\gamma\in J_i}(\gamma\boxtimes \alpha')\otimes (i,\gamma)$ that the muliplicity space  of $1_{\CC_1}$ in $M^i\boxtimes a_{\alpha'\otimes 1}$ is exactly  ${(i,\alpha)}.$  From the proof of Lemma \ref{al5.5} we conclude that 
  $a_{1\otimes (i,\alpha)}=M^i\boxtimes a_{\alpha'\otimes 1},$ which is equivalent to $\d_{(i,\alpha)}=\d_i\d_{\alpha}.$\end{proof}  
	
 We note that the KW set forms a group in the diagonal cosets considered in \cite{X1}.

\section{Kac-Wakimoto Hypothesis}
	
	 Kac-Wakimoto Hypothesis  was proposed   in \cite{KW}:  For $i\in I, \alpha\in J_i, $ $\beta\in \KW $ and  $M^j= a_{\beta\otimes 1}$ then  $s_{ij}\overline{\dot{s}_{\alpha\beta}}\geq 0.$ We prove Kac-Wakimoto Hypothesis  in this section.
	 
	 First, we need some general results that will be used in the proof of the Kac-Wakimoto Hypothesis.  Consider a modular tensor category ${\DD}$ and a regular commutative algebra $B\in \DD.$ Set $a_{\lambda}=B\boxtimes_{\DD}\lambda$ for  $\lambda\in \Or(\DD).$   Also  set $V=\Or(\DD_B)$ and 
	  $\Or(\DD_B^{0})=\{\sigma_i|i\in\Delta\}.$ For each $M\in \DD_B$ define a linear operator 
	 $$T_M: K( \DD_B) \to K(\DD_B) $$
	 such that $T_M(N)=M\boxtimes_BN$ for $N\in \Or(\DD_B).$ For short we denote $T_{\sigma_i}$ by $T_i$ and 
	 $T_{a_{\lambda}}$by $T_{\lambda}.$ We claim that linear operators 
	 $$\{T_{i}, T_{\lambda} | i\in \Delta, \lambda\in \Or(\DD)\}$$
	 commute with each other.  Clearly, $T_{i}, T_{j}$ commute for any $i,j.$ From Section 3.3 of \cite{DMNO} we know that 
	 $a_{\lambda}\boxtimes_B N$ and $N\boxtimes_B a_{\lambda}$ are isomorphic $B$-modules. So $T_{\lambda}$ commutes with any $T_M.$  It is well known that $\lambda\mapsto a_{\lambda}$ is an algebra homomorphism from $K(\DD)$ to $K(\DD_B).$

	 Note that  $K( \DD_B) $ is a module for both  $K(\DD_B^0)$ and $K(\DD).$ The action of $a\in \Or(\DD)$ is given by $T_a.$ It is well known that   $K(\DD_B^0)$ and $K(\DD)$ are semisimple commutative algebras and the 
	 the irreducible representations of $K(\DD_B^0)$ and $
	 K(\DD)$ are given by characters associated to $i\in\Delta$ and $\mu\in \Or(\DD)$ 
	 $$\sigma^j\mapsto \frac{s_{ji}}{s_{1i}} $$
	 $$\lambda \mapsto \frac{{s}^{\DD}_{\lambda\mu}}{{s}^{\DD}_{1\mu}} $$
	 where $s=(s_{ij})$ is the normalized  $S$-matrix associated to $\DD_B^0$ and $s^{\DD}=(s_{\lambda\mu}^\DD)$ is the normalized  $S$-matrix associated to $\DD.$
	 
	 Define a positive definite hermitian form $(,)$ on  $K(\DD_B)$ so that $a\in D$ forms an orthonormal basis.  
	 Since  the operators $T_{j}, T_{\lambda}$ for $j\in \Delta$ and $\lambda\in \Or(\DD)$ can be diagonalized simultaneously,
	  there exists an orthonormal basis $v^{(i,\mu,m)}$ with $i\in \Delta$ and $\mu\in D$ such that
	 $$T_{j}v^{(i,\mu,m)}=\frac{s_{ji}}{s_{1i}}v^{(i,\mu,m)} $$
	 $$T_{\lambda}v^{(i,\mu,m)}= \frac{{s}^\DD_{\lambda\mu}}{{s}^\DD_{1\mu}} v^{(i,\mu,m)}$$
	 for all $j$ and $\lambda$ where $m$ is the index of basis vectors of the eigenspace with indicated eigenvalues. It is possible that for some $i,\mu,$ there are no eigenvectors $v^{(i,\mu,m)}.$ Let  $E$ be the  set of $(i,\mu,m)$ such that $v^{(i,\mu,m)}$ exists.

	 For $\lambda\in \Or(D)$ and $a\in V$ 
	 $$T_\lambda(a)
	 = a_{\lambda}\boxtimes_B a=\sum_{b\in V} V^\lambda_{ab} b
	 $$ 
	 in $K(\DD_B)$ where $V^\lambda_{ab}$ are nonnegative
	 integers.  Denote by $V^\lambda$ the matrix such that $(V^\lambda)_a^b = 
	 V^\lambda_{ab}$.  Then $V^{\mu_1} V^{\mu_2} = 	 \sum_{\mu_3} N_{\mu_1\mu_2}^{\mu_3} V^{\mu_3}.$ Note that $a_{\lambda}=\sum_{c\in V}V_{1c}^{\lambda}c.$   So we have
	 $V^\lambda = \sum_{c\in V} V^\lambda_{1c} N_c$ where $N_c$ is a matrix defined by $N_{ca}^b = \langle c\boxtimes_Ba,b\rangle$ for $a,b\in V$.
  
  Let 
	 $$v^{(i,\mu,m)}=\sum_{a\in V}v^{(i,\mu,m)}_aa.$$
  Recall a well known result from linear algebra on a diagonalizable matrix $X=(x_{ij})_{i,j=1,...,n}$ over $\C.$ Let ${\bf v}_i=\left(\begin{array}{c} v_{1i}\\ \vdots\\ v_{ni}\end{array}\right)$ be the eigenvector of $X$ with eigenvalue $t_i$ for $i=1,...,n$ such that these eigenvectors form an orthonormal basis of $\C^n.$ Then $x_{ij}=\sum_{k=1}^nt_kv_{ik}v_{jk}^*$ where $v_{jk}^*$ is the complex conjugate of $v_{jk}.$
  Applying this result to matrix $V^{\lambda}$ yields
	 $$V^\lambda_{ab} = \sum_{(i,\mu,m)\in E}
	 \frac{s^\DD_{\lambda \mu}}{s^\DD_{1\mu}} \cdot v_a^{(i,\mu,m)}
	 v_b^{(i,\mu,m)^*}$$

	The following result was given in \cite{X2, X3} in the setting of conformal nets. 
	 \begin{thm}\label{KWH7.1}
	 $(i,\mu,m)\in E$   if and only if $\sigma_i$ is a direct summand of $a_{\mu}.$   
	 
	 \end{thm}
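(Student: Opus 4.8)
The plan is to turn the statement into a single positivity identity. Write $m_{i\mu}=\#\{m\mid (i,\mu,m)\in E\}$ for the dimension of the joint eigenspace $V_{(i,\mu)}\subset K(\DD_B)$ on which every $T_j$ acts by $s_{ji}/s_{1i}$ and every $T_\lambda$ by $s^\DD_{\lambda\mu}/s^\DD_{1\mu}$; since the conclusion does not involve $m$, it suffices to prove that $m_{i\mu}>0$ iff $\sigma_i$ is a summand of $a_\mu$. Using that $\lambda\mapsto a_\lambda$ and $j\mapsto\sigma_j$ are algebra maps and that $K(\DD),K(\DD_B^0)$ are semisimple commutative with Verlinde idempotents, I pull these idempotents into $K(\DD_B)$ as $\pi_\mu=s^\DD_{1\mu}\sum_\lambda\overline{s^\DD_{\lambda\mu}}\,a_\lambda$ and $\epsilon_i=s_{1i}\sum_j\overline{s_{ji}}\,\sigma_j$. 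Then $T_{\pi_\mu}=\sum_\lambda s^\DD_{1\mu}\overline{s^\DD_{\lambda\mu}}T_\lambda$ and $T_{\epsilon_i}=\sum_j s_{1i}\overline{s_{ji}}T_j$ are the spectral projections of the two commuting families onto the $\mu$- and the $i$-eigenspaces; because the two families commute and the joint eigenspaces are mutually orthogonal, $P^{(i,\mu)}:=T_{\pi_\mu}T_{\epsilon_i}$ is the orthogonal projection onto $V_{(i,\mu)}$.

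First I would record two formal facts. Since $T_MT_N=T_{M\boxtimes_B N}$ and $T_z(B)=z$ for the unit $B\in\Or(\DD_B)$, the map $z\mapsto T_z$ is faithful, so $P^{(i,\mu)}=T_{\pi_\mu\boxtimes_B\epsilon_i}$ and
$$m_{i\mu}>0\iff P^{(i,\mu)}\ne 0\iff v_{i\mu}:=\pi_\mu\boxtimes_B\epsilon_i\ne 0.$$
Moreover $v_{i\mu}=P^{(i,\mu)}(B)$, so, $P^{(i,\mu)}$ being a self-adjoint idempotent for the form in which $\Or(\DD_B)$ is orthonormal, $\|v_{i\mu}\|^2=(B,P^{(i,\mu)}B)=\<B,\pi_\mu\boxtimes_B\epsilon_i\>$ is exactly the multiplicity of $B$ in $\pi_\mu\boxtimes_B\epsilon_i$. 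Thus the whole theorem reduces to showing that this nonnegative integer vanishes precisely when $\sigma_i\not\subset a_\mu$.

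The key computation is to evaluate $\|v_{i\mu}\|^2$ in closed form. Writing $b_{j\lambda}:=\<\sigma_j,a_\lambda\>$ for the branching multiplicities and using $\<B,a_\lambda\boxtimes_B\sigma_j\>=\<a_{\lambda^*},\sigma_j\>=b_{j\lambda^*}$ (by duality and $a_\lambda^*=a_{\lambda^*}$) together with the reindexing $\lambda\mapsto\lambda^*$, one gets
$$\|v_{i\mu}\|^2=s^\DD_{1\mu}\,s_{1i}\sum_{j}\overline{s_{ji}}\sum_\lambda s^\DD_{\lambda\mu}\,b_{j\lambda}.$$
Now I apply the $S$-matrix compatibility between $\DD$ and its modular category of local modules $\DD_B^0$, namely the normalized intertwining $\Res\circ s^{\DD_B^0}=s^\DD\circ\Res$ of \cite[Theorem 4.1]{KO} applied to $\Res\colon\DD_B^0\to\DD$, which reads $\sum_\lambda b_{j\lambda}s^\DD_{\lambda\mu}=\sum_{j'}s_{jj'}b_{j'\mu}$. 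Substituting this and using unitarity of $s$ in the form $\sum_j\overline{s_{ji}}s_{jj'}=\delta_{ij'}$ collapses the double sum to one term:
$$\|v_{i\mu}\|^2=s^\DD_{1\mu}\,s_{1i}\,b_{i\mu}=s^\DD_{1\mu}\,s_{1i}\,\<\sigma_i,a_\mu\>.$$
The first-row entries $s^\DD_{1\mu}$ and $s_{1i}$ are nonzero (indeed strictly positive in the pseudo unitary setting, all quantum dimensions being positive), so $\|v_{i\mu}\|^2=0$ iff $\<\sigma_i,a_\mu\>=0$, i.e. iff $\sigma_i$ is not a direct summand of $a_\mu$. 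Combined with the reduction of the previous paragraph this proves the theorem.

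The main obstacle I expect is not this final positivity but justifying the two inputs that make the chain exact. The first is that $T_{\pi_\mu}$ and $T_{\epsilon_i}$ are genuinely the orthogonal spectral projections: this needs that the $T_\lambda,T_j$ form a commuting family of normal operators for the chosen Hermitian form, so that eigenspaces attached to distinct characters are truly orthogonal and the pulled-back Verlinde idempotents act as honest self-adjoint projections. The second is the precise dualization and normalization bookkeeping in the compatibility $\sum_\lambda b_{j\lambda}s^\DD_{\lambda\mu}=\sum_{j'}s_{jj'}b_{j'\mu}$, where one must track the scalar $\dim B=D_\DD/D_{\DD_B^0}$ that cancels upon passing from the unnormalized to the normalized $S$-matrices. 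Once these are settled, the strict positivity of $s^\DD_{1\mu}$ and $s_{1i}$ finishes the argument with no possible cancellation.
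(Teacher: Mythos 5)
Your argument is correct, and the computational core is the same as the paper's: both proofs hinge on the intertwining relation $\sum_j s_{kj}b_{j\lambda}=\sum_\mu b_{k\mu}s^{\DD}_{\mu\lambda}$ extracted from \cite[Theorem 4.1]{KO} via the composite $P\circ F$, followed by a collapse under unitarity of $s$ and $s^{\DD}$; your closed form $\|v_{i\mu}\|^2=s^{\DD}_{1\mu}s_{1i}b_{i\mu}$ is literally the paper's identity $\sum_{m\in E(k,\delta)}\frac{1}{s^{\DD}_{1\delta}s_{1k}}|v_1^{(k,\delta,m)}|^2=b_{k\delta}$, since the squared norm of the projection of $B$ onto the joint eigenspace is $\sum_m|v_1^{(i,\mu,m)}|^2$. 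Where you genuinely diverge is in the packaging and in the forward implication. The paper first proves $v^{(i,\mu,m)}_{\sigma_j}=\frac{s_{j'i}}{s_{1i}}v_1^{(i,\mu,m)}$, diagonalizes the fusion matrices $V^\lambda$, and then needs a separate step (its step (4)) invoking the centrality of the $\sigma_k$ and $a_\lambda$ in $K(\DD_B)$ to show that a nonempty eigenspace forces some $v_1^{(k,\delta,t)}\neq 0$. You instead pull back the Verlinde idempotents to $\pi_\mu,\epsilon_i\in K(\DD_B)$, observe that $P^{(i,\mu)}=T_{\pi_\mu\boxtimes_B\epsilon_i}=T_{v_{i\mu}}$, and use the faithfulness of the regular representation ($T_z(B)=z$) to get $P^{(i,\mu)}\neq 0\iff v_{i\mu}\neq 0$, so both directions of the theorem drop out of the single positivity identity; as a bonus, $\sum_{i,\mu}P^{(i,\mu)}=\id$ recovers the completeness of the joint eigenspace decomposition. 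The points you flag as potential obstacles are the right ones, and they are already assumed by the paper: self-adjointness of $T_{\pi_\mu},T_{\epsilon_i}$ follows from $s_{j^*i}=\overline{s_{ji}}$ together with positivity of $s_{1i}$ (pseudo-unitarity, which is a standing hypothesis and is equally needed in the paper's step (3)), and the orthonormal joint eigenbasis is exactly what the paper takes from \cite{DMNO}. So your route is a valid and somewhat cleaner reorganization of the same proof; what it buys is a uniform treatment of the two implications and the elimination of the separate centrality argument.
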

	 \begin{proof} The proof is divided into several steps.
	 	
	 	(1) For $j\in \Delta, \lambda\in \Or(\DD)$ we set $b_{j\lambda}=\<\sigma_j,a_{\lambda}\>=\dim\Hom_B(\sigma_j,a_{\lambda}).$  We claim that 
	 	$$
	 	\sum_j s_{kj} b_{j\lambda} = \sum_{\mu} b_{k\mu} s^\DD_{\mu \lambda}.
	 	$$
	Let $F: K(\DD)\to K(\DD_B)$ be the algebra homomorphism induced by the functor $F: \DD\to \DD_B$ so that $F(\lambda)=a_{\lambda}.$ Note that $K(\DD_B^0)$ is subalgebra of $K(\DD_B)$ and define 
	a projection 
	$$P: K(\DD_B)\to K(\DD_B^0)$$
	 such that $P(\sigma_j)=\sigma_j$ and $P(a)=0$ if $a\in \Or(\DD_B)\setminus 	\Or(\DD_B^0).$ Let $F^0=P\circ F: K(\DD)\to K(\DD_B^0)$ be the composition. 
	It follows from   \cite[Theorem 4.1]{KO} that $F^0s^\DD=sF^0.$ Since $F^0(\lambda)=\sum_{j}b_{j\lambda}\sigma_j,$ we see that
	$$sF^0(\lambda)=\sum_k\sum_j s_{kj} b_{j\lambda}\sigma_k .$$
	 On the other hand, 
	$$F^0s^\DD(\lambda)=\sum_{\mu}s_{\mu,\lambda}^\DD P(a_{\mu})=\sum_k\sum_{\mu}s_{\mu,\lambda}^\DD b_{k\mu}\sigma_k.$$
	 	Comparing the coefficients of $\sigma_k$ in $sF^0(\lambda)$ and $F^0s^\DD(\lambda)$ gives the desired identity.

(2) For $i,j\in\Delta$ we have 	 	
 $$
v_{\sigma_j}^{(i,\mu,m)} = \frac{s_{ j' i}}{s_{1i}}
v_1^{(i,\mu,m)}.$$
To see this, consider equation
$$\sum_{a\in V}v_a^{(i,\mu,m)}\sigma_{j'}\boxtimes a=\frac{s_{ j' i}}{s_{1i}}\sum_{a\in V}v_a^{(i,\mu,m)}a.$$
Observe that $\<1,\sigma_{j'}\boxtimes a\>\ne 0$ if and only if $a=\sigma_j.$ In this case, $\<1,\sigma_{j'}\boxtimes\sigma_j\>=1.$ Comparing the coefficients of $1$  in the equation shows that  $
v_{\sigma_j}^{(i,\mu,m)} = \frac{s_{ j' i}}{s_{1i}}
v_1^{(i,\mu,m)}.$
	 	
(3) 	 Since $b_{j\lambda} = V^\lambda_{1 \sigma_j}$,  we use (2) to produce
	 $$
	 b_{j\lambda} = \sum_{(i,\mu,m)\in E} \frac{s^\DD_{\lambda \mu}}{s^\DD_{1\mu}}
	 \frac{s_{j' i}}{s_{1i}} |v_1^{(i,\mu,m)}|^2.
	 $$
	 According to the fact that $s_{j'i}=\overline{s_{ji}}$ and $s$ is unitary, we see that for fixed $k,$
	 $$
	 \sum_j s_{kj} b_{j\lambda} = \sum_j\sum_{(i,\mu,m)\in E} \frac{s^\DD_{\lambda \mu}}{s^\DD_{1\mu}} \frac{s_{kj}s_{j' i}}{s_{1i}} |v_1^{(i,\mu,m)}|^2=\sum_{(\mu,m)\in E(k)} \frac{s^\DD_{\lambda \mu}}{s^\DD_{1\mu}}
	 \frac{1}{s_{1k}} |v_1^{(k,\mu,m)}|^2
	 $$
	 where $E(k)$ is  the subset of $E$ consisting of $(k,\mu,m).$ By (1) we obtain 
	 $$
	 \sum_{\mu} b_{k\mu} s^\DD_{ \lambda\mu}=\sum_{(\mu,m)\in E(k)} \frac{s^\DD_{\lambda \mu}}{s^\DD_{1\mu}}
	 \frac{1}{s_{1k}} |v_1^{(k,\mu,m)}|^2.
	 $$
	 Multiplying this equation by $s^\DD_{\lambda'\delta} $ and sum over $\lambda\in \Or(\DD),$  and using the fact that $s^{\DD}$ is unitary, we get:
	 $$
	 \sum_{m\in E(k,\delta)} \frac{1}{s^\DD_{1\delta}s_{1k}}|v_1^{(k,\delta,m)}|^2 = b_{k\delta} , 
	 $$
	 where $m\in E(k,\delta)$ means that  $k, \delta$ are fixed and
	 $(k,\delta,m) \in E.$
	 It follows immediately that if $ b_{k\delta} >0$, then
	 $(k,\delta,m)\in E$ for some $m$. Thus we have proved that if $\sigma_k$ is a direct summand of $a_{\delta}$ then $(k,\delta,m)\in E$ for some $m.$
	 
	 (4) Now we assume that  $E(k,\delta)=\{(k,\delta, 1),..., (k,\delta, p)\}$ with
	 $(k, \delta)$ fixed.  Recall from   \cite[Corollary 3.30]{DMNO} that any $\sigma_k$ lies in the center of $K(\DD_B).$ We also know that $a_{\lambda}$ lies in the center of $K(\DD_B)$ for any $\lambda\in \DD.$
  Thus for any $a\in V,$ $T_a$ preserves the subspace of $K(\DD_B)$ spanned by vectors 
	 $v^{(k,\delta,1)},..., v^{(k,\delta, p)}$. 
	 Note that for any $a\in V$, we have:
	 \begin{eqnarray*}
	 v_a^{(k,\delta,m)} & = \langle v^{(k,\delta,m)}, a \rangle 
	 = \langle T_{a'}v^{(k,\delta,m)}, 1 \rangle = \sum_t  N_{a' s}^t v_1^{(k,\delta,t)}
	 \end{eqnarray*}
	  where $ N_{a' m}^t = \langle T_{a'}v^{(k,\delta,m)},
	 v^{(k,\delta,t)}\rangle.$ It follows that if $(k,\delta,m)\in E$, then
	 $v_a^{(k,\delta,m)} \neq 0$ for some $a\in V.$ This  implies
	 $v_1^{(k,\delta,t)} \neq 0$ for some $t$.  Using the relation $
	 \sum_{m\in E(k,\delta)} \frac{1}{s^\DD_{1\delta}s_{1k}}|v_1^{(k,\delta,m)}|^2 = b_{k\delta} 
	 $ shows that   $b_{k\delta} > 0$. 
	 \end{proof}

	 We are now in a position to prove the Kac-Wakimoto hypothesis. 

\begin{thm}\label{KWH7.2} The  Kac-Wakimoto Hypothesis is true.
\end{thm}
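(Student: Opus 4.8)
The plan is to obtain the Kac-Wakimoto Hypothesis as an essentially immediate consequence of Theorem \ref{KWH7.1}, by specializing the general framework of this section to $\DD=\CC_1\otimes\CC_2$ and $B=A$. With this choice $\DD_B^0=\CC$, so $\Or(\DD_B^0)=\{M^i\mid i\in I\}$ (that is, $\Delta=I$ and $\sigma_i=M^i$), the simple objects of $\DD$ are the $W^\alpha\otimes N^\phi$, and the $S$-matrix of $\DD$ factorizes as $s^{\DD}_{(W^\alpha\otimes N^\phi),(W^\gamma\otimes N^\xi)}=\dot s_{\alpha\gamma}\,\ddot s_{\phi\xi}$. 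In this setting $T_j=T_{M^j}$ acts on an eigenvector $v^{(i,\mu,m)}$ with eigenvalue $s_{ji}/s_{1i}$, where $s$ is the $S$-matrix of $\CC$, while $T_\lambda=T_{a_\lambda}$ acts with eigenvalue $s^{\DD}_{\lambda\mu}/s^{\DD}_{1\mu}$.

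The crux is the observation that for $\beta\in\KW$ and $M^j=a_{\beta\otimes 1}$ the operators $T_{M^j}$ and $T_{W^\beta\otimes 1_{\CC_2}}=T_{a_{\beta\otimes 1}}$ are literally the same, since $a_{\beta\otimes 1}=M^j$. Hence their eigenvalues on any $v^{(i,\mu,m)}$ with $\mu=W^\alpha\otimes N^\phi$ must agree, and I would compute
$$\frac{s_{ji}}{s_{1i}}=\frac{s^{\DD}_{(W^\beta\otimes 1_{\CC_2}),(W^\alpha\otimes N^\phi)}}{s^{\DD}_{(1_{\CC_1}\otimes 1_{\CC_2}),(W^\alpha\otimes N^\phi)}}=\frac{\dot s_{\beta\alpha}\,\ddot s_{1\phi}}{\dot s_{1\alpha}\,\ddot s_{1\phi}}=\frac{\dot s_{\beta\alpha}}{\dot s_{1\alpha}},$$
an identity that holds whenever the eigenvector $v^{(i,W^\alpha\otimes N^\phi,m)}$ actually exists.

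To apply this to a prescribed pair $i\in I$, $\alpha\in J_i$, I must produce such an eigenvector, and this is exactly where Theorem \ref{KWH7.1} is needed: $(i,W^\alpha\otimes N^\phi,m)\in E$ for some $m$ precisely when $M^i$ is a direct summand of $a_{W^\alpha\otimes N^\phi}$. By Frobenius reciprocity for the free-module functor one has $\langle a_{W^\alpha\otimes N^\phi},M^i\rangle_A=\langle W^\alpha\otimes N^\phi,M^i\rangle_{\CC_1\otimes\CC_2}=\langle N^\phi,(i,\alpha)\rangle_{\CC_2}$, the multiplicity of $N^\phi$ in $(i,\alpha)$. Since $\alpha\in J_i$ forces $(i,\alpha)\ne 0$, some simple $N^\phi$ occurs in $(i,\alpha)$, so the required eigenvector is present and the displayed identity is valid for this $i$ and $\alpha$.

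Finally I would conclude by symmetry and positivity. As $s$ and $\dot s$ are symmetric, and $s_{1i}=d_i/D>0$, $\dot s_{1\alpha}=d_\alpha/D_1>0$ because all categorical dimensions are positive in the pseudo-unitary setting (here $\CC$ inherits pseudo-unitarity from $\CC_1\otimes\CC_2$), the identity rearranges to $s_{ij}=(s_{1i}/\dot s_{1\alpha})\,\dot s_{\alpha\beta}$, and therefore
$$s_{ij}\,\overline{\dot s_{\alpha\beta}}=\frac{s_{1i}}{\dot s_{1\alpha}}\,\dot s_{\alpha\beta}\,\overline{\dot s_{\alpha\beta}}=\frac{s_{1i}}{\dot s_{1\alpha}}\,|\dot s_{\alpha\beta}|^2\ge 0,$$
which is the Kac-Wakimoto Hypothesis. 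The one genuine obstacle is the existence step: the entire argument rests on guaranteeing, via Theorem \ref{KWH7.1}, that for each $\alpha\in J_i$ the eigenvector indexed by some $(i,W^\alpha\otimes N^\phi,m)$ is present; once that is secured, the factorization of $s^{\DD}$ together with the positivity of the leading $S$-matrix entries forces the inequality.
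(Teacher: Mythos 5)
Your proposal is correct and follows essentially the same route as the paper's own proof: specialize Theorem \ref{KWH7.1} to $\DD=\CC_1\otimes\CC_2$, $B=A$, take $\mu=W^\alpha\otimes N^\phi$ with $N^\phi$ a simple summand of $(i,\alpha)$ to guarantee the eigenvector exists, equate the eigenvalues of $T_{M^j}$ and $T_{a_{\beta\otimes 1}}$, cancel the factor $\ddot s_{1\phi}$, and conclude from $s_{1i}>0$ and $\dot s_{1\alpha}>0$. The argument and the use of the key theorem match the paper step for step.
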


\begin{proof}
	
We are working in the setting of Theorem \ref{KWH7.1}.   Let $\DD=\CC_1\otimes\CC_2$ and $B=A.$ $\Delta=I.$ Then $s^{\DD}=\dot{s}\otimes\ddot{s}.$ In this case $\sigma_i=M^i$ for $i\in I.$ Let $\alpha\in J_i,$ and $x$ be a simple direct summand of $(i,\alpha).$ Set $\mu=\alpha\otimes x.$ By Theorem \ref{KWH7.1}. $(i,\mu,m)\in E$ as $\<\sigma_i, a_{\alpha\otimes x}\>=\<\sigma_i,\alpha\otimes x\>\ne 0.$ Let $\beta\in\KW$ and $M^j=a_{\beta\otimes 1}=a_{\lambda}.$ So we have 
$$T_jv^{(i,\mu.m)}=\frac{s_{ji}}{s_{1i}}v^{(i,\mu.m)},\  \  T_{\lambda}v^{(i,\mu.m)}=\frac{s^\DD_{\lambda\mu}}{s^\DD_{1\mu}}v^{(i,\mu.m)}.$$
Since $T_j=T_\lambda$ we have $\frac{s_{ji}}{s_{1i}}=\frac{s^\DD_{\lambda\mu}}{s^\DD_{1\mu}}.$ It is easy to see that 
$s^\DD_{\lambda\mu}=\dot{s}_{\beta\alpha}\ddot{s}_{1x}$ and $s^\DD_{1\mu}=\dot{s}_{1\alpha}\ddot{s}_{1x}.$ Since  
$\ddot{s}_{1x}\ne 0$ we immediately have $\frac{s^\DD_{\lambda\mu}}{s^\DD_{1\mu}}=\frac{\dot{s}_{\beta\alpha}}{\dot{s}_{\alpha1}}=\frac{s_{ji}}{s_{1i}}.$  Using the fact that 
$s_{1i}>0$ and $\dot{s}_{\alpha1}>0$ we conclude that $s_{ji}\overline{\dot{s}_{\beta\alpha}}\geq 0.$ That is, the Kac-Wakimoto Hypothesis is true in the categorical coset construction. \end{proof}

\section{More results on $(i,\alpha)$}

In  this section we assume that $\d_{(i,\alpha)}=\d_i\d_{\alpha}$ for $i\in I$ and $\alpha\in J_i,$ and discuss how to decompose $(i,\alpha)$ into a direct sum of simple objects in $\CC_2.$

By Theorem \ref{KWd}, $G=\Or(\CC_1^{kw})$ forms an abelian group.  For any $i\in I,$ $\alpha\in J_i,$  let 
 $$G^i=\{\beta\in\CC_1^{kw}|a_{\beta\otimes 1}\boxtimes_AM^i\cong M^i\},$$
$$G^{(i,\alpha)}=\{\beta\in G^i| \beta \boxtimes \alpha\cong \alpha\}.$$
Clearly,  $G^{(i,\alpha)}$ is a subgroup of $G^i$ which is a subgroup of $G.$ 
Notice that   the inverse of $a_{\beta\otimes 1}$ is $a_{\beta'\otimes 1}.$

\begin{lem} If $\beta\in G^{(i,\alpha)}$ then $\beta\in J_1$ and $a_{\beta\otimes 1}=a_{1\otimes (1,\beta')}.$ In particular, $(1,\beta')\boxtimes (i,\alpha)\cong (1,\beta)\boxtimes (i,\alpha)\cong (i,\alpha).$
\end{lem}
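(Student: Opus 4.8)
The plan is to exploit the standing assumption of this section through Theorem \ref{KWd}. First I would record the set-up: under the hypothesis $\d_{(i,\alpha)}=\d_i\d_\alpha$, Theorem \ref{KWd} tells us that $\KW=G=\Or(\CC_1^{kw})$ is an abelian group and (from its proof) that $\d_\beta=1$ for every $\beta\in\KW$, i.e.\ each such $W^\beta$ is a simple current. Since $G^{(i,\alpha)}\subseteq G=\KW$, the object $W^\beta$ is a simple current, and by the definition of $G^{(i,\alpha)}$ we have $W^\beta\boxtimes W^\alpha\cong W^\alpha$ with $\alpha\in J_i$.

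The first genuine step is to prove $\beta\in J_1$. From $W^\beta\boxtimes W^\alpha\cong W^\alpha$ we get $N_{\beta\alpha}^\alpha=1$, and rigidity gives $\<W^\beta,W^\alpha\boxtimes W^{\alpha'}\>=N_{\beta\alpha}^\alpha\geq 1$, so $W^\beta$ is a direct summand of $W^\alpha\boxtimes W^{\alpha'}$. By Theorem \ref{KWd}(4) the object $W^{\alpha'}\boxtimes W^{\alpha}\cong W^{\alpha}\boxtimes W^{\alpha'}$ decomposes into simple objects lying in $J_1$ only; hence $\beta\in J_1$, and since $\Or(\FF^1)=\{W^\gamma\mid\gamma\in J_1\}$ is a fusion category and thus closed under duals (Theorem \ref{lin}(4)), also $\beta'\in J_1$.

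Next I would identify $a_{\beta\otimes 1}$ with $a_{1\otimes (1,\beta')}$ by comparing explicit $\CC_1\otimes\CC_2$-decompositions. Using the computation in the proof of Lemma \ref{a}, $a_{\beta\otimes 1}=\bigoplus_{\delta\in J_1}(\delta\boxtimes\beta)\otimes(1,\delta)$, while $A=\bigoplus_{\delta\in J_1}\delta\otimes(1,\delta)$ gives $a_{1\otimes(1,\beta')}=\bigoplus_{\delta\in J_1}\delta\otimes\bigl((1,\delta)\boxtimes(1,\beta')\bigr)$. Lemma \ref{al5.5} with $i=1$ yields $(1,\delta)\boxtimes(1,\beta')=(1,\delta\boxtimes\beta')$, a single simple object since $\beta'$ is a simple current. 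Reindexing the first sum through the bijection $\delta\mapsto\delta\boxtimes\beta$ of $J_1$ (with inverse $\epsilon\mapsto\epsilon\boxtimes\beta'$) turns it into $\bigoplus_{\epsilon\in J_1}\epsilon\otimes(1,\epsilon\boxtimes\beta')$, which matches the second expression term by term; hence $a_{\beta\otimes 1}=a_{1\otimes(1,\beta')}$.

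Finally, for the ``in particular'' assertion I would apply the fusion rule of Lemma \ref{al5.5}, namely $(1,\beta)\boxtimes(i,\alpha)=\sum_{\gamma\in J_i}N_{\beta\alpha}^\gamma(i,\gamma)$. Since $W^\beta\boxtimes W^\alpha\cong W^\alpha$ forces $N_{\beta\alpha}^\gamma=\delta_{\gamma\alpha}$, this collapses to $(1,\beta)\boxtimes(i,\alpha)=(i,\alpha)$; tensoring $W^\beta\boxtimes W^\alpha\cong W^\alpha$ with $W^{\beta'}$ gives $W^{\beta'}\boxtimes W^\alpha\cong W^\alpha$, so the identical argument yields $(1,\beta')\boxtimes(i,\alpha)=(i,\alpha)$. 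I expect the main obstacle to be the second paragraph: the passage from the invariance $W^\beta\boxtimes W^\alpha\cong W^\alpha$ to the membership $\beta\in J_1$ genuinely relies on the group structure of $\KW$ via Theorem \ref{KWd}(4), whereas everything else is bookkeeping with decomposition formulas already established.
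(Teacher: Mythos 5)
Your proof is correct, but it takes a genuinely different route from the paper's. The paper's argument is a single $A$-module computation: using the standing hypothesis $d_{(i,\alpha)}=d_id_\alpha$ through Lemma \ref{kwl2} to write $a_{1\otimes(i,\alpha)}=M^i\boxtimes_A a_{\alpha'\otimes 1}$, and then the defining property $a_{\beta\otimes 1}\boxtimes_A M^i\cong M^i$ of $G^i$, it obtains $a_{\beta\otimes(i,\alpha)}=a_{\beta\otimes 1}\boxtimes_A a_{1\otimes(i,\alpha)}=M^i\boxtimes_A a_{\alpha'\otimes 1}=a_{1\otimes(i,\alpha)}$, and reads all three assertions off this one identity (e.g.\ $\beta\in J_1$ because $\beta$ lies in the $\CC_1$-support of the left side while the right side is supported on $J_1$). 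You instead extract $\beta\in J_1$ from rigidity ($W^\beta$ is a summand of $W^\alpha\boxtimes W^{\alpha'}$) combined with Theorem \ref{KWd}(4), verify $a_{\beta\otimes 1}=a_{1\otimes(1,\beta')}$ by matching the explicit $\CC_1\otimes\CC_2$-decompositions under the reindexing $\delta\mapsto\delta\boxtimes\beta$ of $J_1$, and get the fusion statement from Lemma \ref{al5.5}; the two simple $A$-modules having the same underlying object then forces the $A$-module isomorphism via Lemma \ref{a} and Frobenius reciprocity, which is the same level of rigor the paper uses. What your approach buys is that it never invokes the condition $a_{\beta\otimes 1}\boxtimes_A M^i\cong M^i$ at all: it only needs $\beta\in\KW$ and $\beta\boxtimes\alpha\cong\alpha$, so it proves a marginally stronger statement, at the cost of being longer and of routing the dimension hypothesis through Theorem \ref{KWd}(4) rather than through Lemma \ref{kwl2} as the paper does. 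Both arguments are sound.
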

 \begin{proof} 
  By Lemma \ref{kwl2} we see that
  $$a_{\beta\otimes (i,\alpha)}=a_{\beta\otimes 1}\boxtimes a_{1\otimes (i,\alpha)}=a_{\beta\otimes 1}\boxtimes M^i\boxtimes a_{\alpha'\otimes1}=M^i\boxtimes a_{\alpha'\otimes1}={1\otimes (i,\alpha)}.$$ 
  This implies that $\beta\in J_1,$  $a_{\beta\otimes 1}=a_{1\otimes (1,\beta')}$ and $(1,\beta')\boxtimes (i,\alpha)\cong (i,\alpha).$  The isomorphism $(1,\beta)\boxtimes (i,\alpha)\cong (i,\alpha)$ is clear. \end{proof}
 
 	

 	
 Notice that  $G^i$ acts on $\{W^{\alpha}|\alpha\in J_i\}.$  We have 
 \begin{thm}\label{KWd1}  Let $\alpha,\gamma\in J_i.$  Then 
	
(1) All simple summands of $(i,\alpha)$ have the same dimension. 

(2) $(i,\alpha)\cong (i,\gamma)$ if and only if $W^\gamma$ and $W^\alpha$ are in the same $G^i$-orbit.

(3) $\<(i,\alpha), (i,\beta)\>=0$ if and only if $W^\gamma$ and $W^\alpha$ are not in the same $G^i$-orbit.
\end{thm}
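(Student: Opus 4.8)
The engine of the proof is an explicit formula for $\<(i,\alpha),(i,\gamma)\>_{\CC_2}$. First I would use the standing assumption $\d_{(i,\alpha)}=\d_i\d_\alpha$ together with Lemma \ref{kwl2} to promote the containment there to the equality $a_{1\otimes(i,\alpha)}=M^i\boxtimes_A a_{\alpha'\otimes 1}$. Repeating the Frobenius-reciprocity computation in the proof of Lemma \ref{kwl2} verbatim, now with two different indices, gives $\<a_{1\otimes(i,\alpha)},a_{1\otimes(i,\gamma)}\>_A=\<(i,\alpha),(i,\gamma)\>_{\CC_2}$, so it is enough to evaluate $\<M^i\boxtimes_A a_{\alpha'\otimes 1},M^i\boxtimes_A a_{\gamma'\otimes 1}\>_A$. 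Proposition \ref{p3.6} rewrites this as $\sum_{\epsilon\in\KW}N_{\alpha'\gamma}^{\epsilon}N_{i'i}^{k}$ with $M^k=a_{\epsilon\otimes 1}$. Since $\KW$ is a group by Theorem \ref{KWd}, each $W^{\epsilon}$ with $\epsilon\in\KW$ is invertible, so $N_{\alpha'\gamma}^{\epsilon}\in\{0,1\}$ equals $1$ exactly when $W^{\epsilon}\boxtimes W^{\alpha}\cong W^{\gamma}$, while $N_{i'i}^{k}=\<M^i\boxtimes_A a_{\epsilon\otimes 1},M^i\>\in\{0,1\}$ equals $1$ exactly when $\epsilon\in G^i$. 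This yields the orbit formula
$$\<(i,\alpha),(i,\gamma)\>_{\CC_2}=\#\{\epsilon\in G^i\mid W^{\epsilon}\boxtimes W^{\alpha}\cong W^{\gamma}\},$$
which equals $|G^{(i,\alpha)}|$ when $W^{\gamma}$ lies in the $G^i$-orbit of $W^{\alpha}$ and $0$ otherwise; in particular $\sum_s m_s^2=|G^{(i,\alpha)}|$ if $(i,\alpha)=\bigoplus_s m_s X_s$ is the decomposition into distinct simples.

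Parts (3) and (2) then follow at once. The formula gives $\<(i,\alpha),(i,\gamma)\>=0$ precisely when $W^{\gamma}$ and $W^{\alpha}$ lie in different $G^i$-orbits, which is (3). For (2), if $(i,\alpha)\cong(i,\gamma)$ then, both being nonzero as $\alpha,\gamma\in J_i$, the formula forces a nonzero pairing, hence a common orbit. Conversely, given $\gamma=\epsilon\cdot\alpha$ with $\epsilon\in G^i$, I would use the free-module identity $a_{\epsilon\otimes 1}\boxtimes_A M^i\cong M^i\boxtimes(W^{\epsilon}\otimes 1_{\CC_2})=\bigoplus_{\delta\in J_i}W^{\epsilon\cdot\delta}\otimes(i,\delta)$ and compare it, through the isomorphism $a_{\epsilon\otimes 1}\boxtimes_A M^i\cong M^i$ supplied by $\epsilon\in G^i$, with $M^i=\bigoplus_{\delta}W^{\delta}\otimes(i,\delta)$; reading off the multiplicity space of $W^{\gamma}$ gives $(i,\gamma)\cong(i,\epsilon^{-1}\cdot\gamma)=(i,\alpha)$.

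For part (1), the preceding Lemma shows that every $\beta\in G^{(i,\alpha)}$ lies in $J_1$, that $(1,\beta)$ is invertible in $\CC_2$ since it is simple with $\d_{(1,\beta)}=\d_{\beta}=1$, and that $(1,\beta)\boxtimes(i,\alpha)\cong(i,\alpha)$. Hence $\Gamma=\{(1,\beta)\mid\beta\in G^{(i,\alpha)}\}$ is a group of dimension-one objects that permutes the simple summands of $(i,\alpha)$ while preserving their multiplicities; since tensoring by a dimension-one object preserves dimension, any two summands lying in a common $\Gamma$-orbit have equal dimension. Thus (1) reduces to showing that $\Gamma$ acts transitively on the simple summands of $(i,\alpha)$.

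This transitivity is the main obstacle, and I would derive it from the simplicity of $M^i$. Because $M^i$ is simple and is fixed by every $a_{\beta\otimes 1}$ with $\beta\in G^i$, the isomorphisms $a_{\beta\otimes 1}\boxtimes_A M^i\cong M^i$ are unique up to scalar and assemble into a projective action of $G^i$ on $M^i$; restricting to the subgroup $G^{(i,\alpha)}$ that fixes $W^{\alpha}$ yields a projective representation of $G^{(i,\alpha)}$ on the fiber $(i,\alpha)$, and simplicity of $M^i$ forces this projective representation to be irreducible. An irreducible projective representation of the abelian group $G^{(i,\alpha)}$ is concentrated on a single $\Gamma$-orbit with all multiplicities equal to its dimension, and the self-pairing $\sum_s m_s^2=|G^{(i,\alpha)}|$ from the orbit formula, combined with a strict-positivity/Frobenius--Perron argument in the spirit of the proof of Theorem \ref{mtheorem1}, rules out more than one $\Gamma$-orbit. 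Consequently all simple summands of $(i,\alpha)$ lie in one $\Gamma$-orbit and share a common dimension, proving (1). The delicate point is precisely this last step: establishing that the fiber $(i,\alpha)$ cannot break into several $\Gamma$-orbits without contradicting the simplicity of $M^i$.
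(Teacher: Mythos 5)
Your parts (2) and (3) are correct and follow the same route as the paper: both rest on the pairing formula of Proposition \ref{p3.6} applied through the equality $a_{1\otimes(i,\alpha)}=M^i\boxtimes_A a_{\alpha'\otimes 1}$ (available here since $\d_{(i,\alpha)}=\d_i\d_\alpha$), and the resulting orbit formula $\<(i,\alpha),(i,\gamma)\>=\#\{\epsilon\in G^i\mid W^{\epsilon}\boxtimes W^{\alpha}\cong W^{\gamma}\}$ is exactly what the paper uses.

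The genuine gap is in part (1), at the transitivity step you yourself flag. The count $\sum_s m_s^2=|G^{(i,\alpha)}|$ plus the $\Gamma$-action does not exclude several orbits (for instance $|G^{(i,\alpha)}|=2$ with two $\Gamma$-fixed summands of multiplicity one satisfies the count), and your proposed remedy is not an argument: $(i,\alpha)$ is an object of $\CC_2$, not a vector space, so a ``projective representation of $G^{(i,\alpha)}$ on the fiber'' would have to be constructed on some Hom-space, and the assertion that simplicity of $M^i$ makes it irreducible is precisely the content that needs proving; the closing appeal to a Frobenius--Perron argument does not supply it. The paper closes this gap with a different device: it introduces the auxiliary regular commutative algebra $B=\oplus_{\beta\in G^{(i,\alpha)}}\beta\otimes(1,\beta)$ in $\CC_1\otimes\CC_2$ and proves that $N=\alpha\otimes(i,\alpha)$ is a \emph{simple} $B$-module, by computing $\<X,X\>=o(G^{(i,\alpha)})$ for any simple $B$-summand $\alpha\otimes X$ of $N$ (running the analogue of Lemma \ref{al5.5} over $B$) and comparing with $\<(i,\alpha),(i,\alpha)\>=o(G^{(i,\alpha)})$ from Proposition \ref{p3.6}, which forces $X=(i,\alpha)$. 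Once $N$ is simple over $B$, Frobenius reciprocity gives a nonzero, hence injective, $B$-map $N\to a^B_{\alpha\otimes X_s}=\oplus_{\beta\in G^{(i,\alpha)}}\alpha\otimes\bigl((1,\beta)\boxtimes X_s\bigr)$ for each fixed simple summand $X_s$, and comparing $\CC_2$-contents shows that every $X_{s_1}$ has the form $(1,\beta)\boxtimes X_s$ --- the transitivity you need. To repair your proof you must establish the simplicity of $\alpha\otimes(i,\alpha)$ as a $B$-module (or an equivalent statement); without it, part (1) is not proved.
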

\begin{proof}
(1) Set $B=\oplus_{\beta\in G^{(i,\alpha)}}\beta\otimes (1,\beta).$ Then $B$ is a regular commutative algebra in
$\CC_1\otimes\CC_2$ and $\alpha\otimes (i,\alpha)\in (\CC_1\otimes\CC_2)_B^0.$ 
We first show that $\alpha\otimes (i,\alpha)$ is simple $B$-module.
Let $N=\alpha\otimes X$ be a simple direct submmand of  $\alpha\otimes (i,\alpha).$
We denote the Kac-Wakimoto set associated to $B$  by $\KW^B.$ Then $\KW^B=G^{(i,\alpha)}.$
For any $\beta\in G^{(i,\alpha)},$ $a_{\beta\otimes  1}^B=B\boxtimes_{\CC_1\otimes  \CC_2} (\beta\otimes 1)$
and $a^B_{\beta\otimes 1}\boxtimes_BN\cong N.$ 

Applying  Lemma \ref{al5.5} with $M^i,  (i,\alpha)$ replacing by $N, X$ respectively, 
we know that 
$$a_{1\otimes X}^B=B\boxtimes_{\CC_1\otimes\CC_2}(1\otimes X)=\sum_{\beta\in G^{(i,\alpha)}}N_{\alpha \alpha'}^\beta \beta\otimes X=\sum_{\beta\in G^{(i,\alpha)}} \beta\otimes X$$
is a direct summand of
\begin{eqnarray*}
	N\boxtimes_Ba^B_{\alpha'\otimes 1}&=&N\boxtimes_{\CC_1\otimes\CC_2}(\alpha'\otimes 1)\\
	&=&(\alpha\boxtimes \alpha')\otimes X\\
	&=&\sum_{\beta\in G^{(i,\alpha)}}\beta\otimes X+\sum_{\beta\notin G^{(i,\alpha)}}N_{\alpha,\alpha'}^\beta \beta\otimes X.
\end{eqnarray*}
Thus 
\begin{eqnarray*}
a_{1\otimes X}^B\boxtimes_Ba_{\alpha\otimes 1}^B&=&\sum_{\beta\in G^{(i,\alpha)}} (\beta\boxtimes \alpha)\otimes X\\
&\cong& \sum_{\beta\in G^{(i,\alpha)}} (\alpha\boxtimes \beta)\otimes X\\
&=&N\boxtimes_B(\sum_{\beta\in G^{(i,\alpha)}}a_{\beta\otimes 1}^B). 
\end{eqnarray*}
It follows that 
\begin{eqnarray*}
\<X,X\> &=&\<a^B_{1\otimes X}, a^B_{1\otimes X}\>\\
&=&\<a^B_{1\otimes X}, N\boxtimes_Ba^B_{\alpha'\otimes 1}\>\\
&=&\<a^B_{1\otimes X}\boxtimes a^B_{\alpha\otimes 1}, N\>\\
&=&\<N\boxtimes_B(\sum_{\beta\in G^{(i,\alpha)}}a_{\beta\otimes 1}^B), N\>\\
&=&\<o(G^{(i,\alpha)})N, N\>\\
&=& o(G^{(i,\alpha)}).
\end{eqnarray*}
On the other hand, by Lemma \ref{kwl2} and Proposition \ref{p3.6} we know that $\<(i,\alpha), (i,\alpha\>)=o(G^{(i,\alpha)}).$ This implies that  $(i,\alpha)=X.$ So $\alpha\otimes (i,\alpha)$ is a simple 
$B$-module.

Let $(i,\alpha)=\sum_{s=1}^tn_s X_s$ in $\CC_2$ where  $\{X_1,...,X_t\}$ are inequivalent simple objects and
$n_s>0.$ For fixed $s$ consider $a^B_{\alpha\otimes X_s}=\oplus_{\beta\in G^{(i,\alpha)}}\alpha\otimes ((1,\beta)\boxtimes X_s).$ Recall $N=\alpha\otimes (i,\alpha).$ Then 
$$\<N,a^B_{\alpha\otimes X_s}\>=\<N, \alpha\otimes X_s\>=\<(i,\alpha),X_s\>=n_s.$$
So $N$ is a $B$-submodule of $a^B_{\alpha\otimes X_s}.$ This implies that for any $1\leq s_1\leq t$ there exists $\beta_{s_1}\in G^{(i,\alpha)}$ such that $X_{s_1}=(1,\beta_{s_1})\boxtimes X_s.$ In particular, all
$X_s$ have the same dimension. 


(2) If $W^{\alpha}, W^{\gamma}$ are in the same orbit, then there exists $\beta\in G^i$ such that $\gamma\cong \beta\boxtimes \alpha.$ Since 
$$\sum_{\delta\in J_i}\delta\otimes (i,\delta)=M^i=a_{\beta\otimes 1}\boxtimes_AM^i=(\beta\otimes 1)\boxtimes (\sum_{\delta\in J_i} \delta\otimes (i,\delta))=\sum_{\delta\in J_i}(\beta\otimes \delta)\otimes (i,\delta)$$
we see that $(\beta\boxtimes \alpha)\otimes (i,\alpha)=\gamma\otimes (i,\alpha)$ is a direct summand of $\gamma\otimes (i,\gamma).$ This implies that $(i,\alpha)$ is a direct summand of $(i,\gamma).$ Similarly,
 $(i,\gamma)$ is a direct summand of $(i,\alpha).$ Thus  $(i,\alpha)$ and $(i,\gamma)$ are the same. 

 (3) From Proposition \ref{p3.6} we have
$$\<(i,\alpha), (i,\gamma)\>=\<a_{1\otimes (i,\alpha)}, a_{1\otimes (i,\gamma)}\>=\<M^i\boxtimes a_{\alpha'\otimes 1}, M^i\boxtimes a_{\gamma'\otimes 1}\>=\sum_{\epsilon\in \KW}N_{\alpha, \gamma'}^{\epsilon} N_{i',i}^k$$
where $k=a_{\epsilon\otimes 1}.$ Note that $N_{i',i}^k\ne 0$ if and only if $\epsilon\in G^i.$ Since $\gamma$ and 
$\alpha$ are not in the same $G^i$-orbit, it is immediately that $N_{\alpha, \gamma'}^{\epsilon}=0.$ 
This implies that $\<(i,\alpha), (i,\gamma)\>=0.$
\end{proof}			

Unfortunately, we can not determine the explicit decomposition of $(i,\alpha)$ into a direct sum of simple objects in $\CC_2$ for the categorical setting. But if $\CC_1,\CC_2, \CC$ are module categories of rational vertex operator algebras, the decomposition is given in the next section.

\section{Coset construction for vertex operator algebras}

In the rest of this paper, we assume that $U$ is a subalgebra of vertex operator algebra $V$ such that 

(1) $U=U^{cc},$ 

(2) $U, U^c, V$ are rational, $C_2$-cofinite and of stong CFT types,

(3) The conformal weights of any irredcuble $U,U^c,V$-modules are positive except $U,U^c,V.$  

Let $M^i$ for $\in I=\{1,...,p\}$ be the inequivalent irreducible $V$-modules with $M^1=V,$ 
$W^{\alpha}$ for $\alpha\in J=\{1,...,q\}$ be the inequivalent irreducible $U$-modules with $W^1=U,$ and $N^{\phi}$ for $\phi\in K=\{1,...,s\}$ be the inequivalent irreducible $U^c$-modules
with $N^1=U^c.$ Let $\CC_1=\CC_U$ be the $U$-module category, $\CC_2=\CC_{U^c}$ and 
$\CC=\CC_V=(\CC_1\otimes \CC_2)_V^0.$ So the results on categorical coset constructions are valid in the setting of vertex operator algebra. We will not repeat these results in this section.

Recall $G^i, G^{(i,\alpha)}, B$ from Section 8. Set $t=o(G^{(i,\alpha)}).$  Then we have
\begin{thm}
	Assume that $\Or(\CC_1^{kw})$ is a group and $G^{(i,\alpha)}$ is a cyclic subgroup. Then 
	$(i,\alpha)=\oplus_{s=1}^tX_s$ is a direct sum of inequivalent irreducible $U^c$-modules such that
	$\dim (X_s)=\frac{1}{t}d_{(i,\alpha)}=\frac{1}{t}d_id_{\alpha}$ for all $s.$
\end{thm}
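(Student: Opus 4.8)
The plan is to combine the counting data already recorded for $(i,\alpha)$ with the representation theory of the cyclic simple current extension $B$ from Section 8. Recall from Theorem~\ref{KWd} that the hypothesis that $\Or(\CC_1^{kw})$ is a group is equivalent to $d_{(i,\alpha)}=d_id_\alpha$, and from the proof of Theorem~\ref{KWd1}(1) that $\<(i,\alpha),(i,\alpha)\>=o(G^{(i,\alpha)})=t$, that every inequivalent simple summand of $(i,\alpha)$ in $\CC_2$ has the same dimension, and that these summands form a single orbit under the action $\beta\mapsto (1,\beta)\boxtimes-$ of $G^{(i,\alpha)}$ (here $(1,\beta)$ is invertible in $\CC_2$ since $\beta\in G^{(i,\alpha)}\subseteq\KW$ has $d_\beta=1$).

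First I would determine the shape of the decomposition. Write $(i,\alpha)=\sum_{s=1}^r n_sY_s$ with the $Y_s$ inequivalent and simple. Since $(1,\beta)\boxtimes(i,\alpha)\cong(i,\alpha)$ for every $\beta\in G^{(i,\alpha)}$ and $(1,\beta)\boxtimes-$ is an auto-equivalence of $\CC_2$, this functor permutes the $Y_s$ and preserves multiplicities; as the $Y_s$ form a single orbit, all $n_s$ equal a common value $m$. Let $H=\{\beta\in G^{(i,\alpha)}\mid (1,\beta)\boxtimes Y_1\cong Y_1\}$ be the stabilizer, so by orbit-stabilizer $r=t/|H|$ and $(i,\alpha)=m\bigoplus_{s=1}^{t/|H|}Y_s$. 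Then $\<(i,\alpha),(i,\alpha)\>=m^2r=m^2t/|H|$, and comparing with $\<(i,\alpha),(i,\alpha)\>=t$ gives the numerical identity $m^2=|H|$.

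The heart of the argument is to prove $m=1$, which is exactly where the cyclicity hypothesis is used. By Theorem~\ref{KWd1}(1) the object $\alpha\otimes(i,\alpha)$ is a simple $B$-module, where $B=\bigoplus_{\beta\in G^{(i,\alpha)}}\beta\otimes(1,\beta)$ is an abelian simple current extension of $U\otimes U^c$ by the cyclic group $G^{(i,\alpha)}$. By the representation theory of such extensions, the restriction of a simple $B$-module to $U\otimes U^c$ is a single $G^{(i,\alpha)}$-orbit of simple modules, each occurring with multiplicity $\dim\rho$ for an irreducible projective representation $\rho$ of the stabilizer $H$. Since the simple $U\otimes U^c$-summands of $\alpha\otimes(i,\alpha)$ are exactly the $\alpha\otimes Y_s$ (using $\beta\boxtimes\alpha\cong\alpha$ for $\beta\in G^{(i,\alpha)}$), this multiplicity is the same $m$ as above, so $m=\dim\rho$. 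Since $H$ is a subgroup of the cyclic group $G^{(i,\alpha)}$ it is itself cyclic, hence its Schur multiplier $H^2(H,\BC^\times)$ vanishes and every irreducible projective representation is one-dimensional; therefore $m=1$. Combined with $m^2=|H|$ this forces $|H|=1$, so the stabilizer is trivial, $r=t$, and $(i,\alpha)=\bigoplus_{s=1}^tY_s$ is multiplicity-free with $t$ inequivalent simple $U^c$-modules. Finally $d_id_\alpha=d_{(i,\alpha)}=\sum_{s=1}^t\dim Y_s=t\dim Y_1$ together with the equality of all $\dim Y_s$ yields $\dim Y_s=\frac{1}{t}d_{(i,\alpha)}=\frac{1}{t}d_id_\alpha$.

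The main obstacle is the vanishing step $m=1$: it does not follow from the counting identity $m^2=|H|$ alone, which only shows $|H|$ is a perfect square, and genuinely requires identifying $m$ with the dimension of an irreducible projective representation of the stabilizer together with the triviality of the Schur multiplier of a cyclic group. One must also check that the projective $H$-action on the pertinent multiplicity space is the one governed by the double braiding of the $(1,\beta)$, so that the cyclic Schur multiplier argument applies; the rationality, $C_2$-cofiniteness, and strong CFT type of $U,U^c,V$ ensure that $B$ is a rational $C_2$-cofinite VOA extension to which this module theory is available.
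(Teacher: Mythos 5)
Your proof is correct and follows essentially the same route as the paper: both arguments rest on the simplicity of $\alpha\otimes(i,\alpha)$ as a $B$-module, the count $\langle(i,\alpha),(i,\alpha)\rangle=o(G^{(i,\alpha)})=t$ and the equal-dimension statement from Theorem \ref{KWd1}, and the vanishing of the Schur multiplier of a cyclic group to kill the projective multiplicities. The only (cosmetic) difference is that you phrase the multiplicity analysis via the stabilizer $H\subseteq G^{(i,\alpha)}$ of a simple summand under restriction along $U\otimes U^c\subseteq B$, whereas the paper uses the dual formulation with the stabilizer $D_N$ in the dual group $D$ acting on $B$ with $B^D=U\otimes U^c$, citing \cite{DLM4,DRX}.
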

\begin{proof} 
	Note that $B=\oplus_{\beta\in G^{(i,\alpha)}}B^{\beta}$  is a simple current extension of $U\otimes U^c$ where $B^\beta=\beta\otimes (1,\beta).$ Then  the dual group $D$ of $G^{(i,\alpha)}$ is an automorphism group of $B$ such that for any  $\chi\in D$ and $\beta\in G^{(i,\alpha)},$ $\chi$ acts on $B^{\beta}$ as $\chi(\beta).$ It is clear that $B^D=U\otimes U^c.$ Recall from the proof of Theorem \ref{KWd1} that $N=\alpha\otimes (i,\alpha)$ is an irreducible $B$-module. Let $D_N=\{\chi\in D| N\circ \chi\cong N\}$ be the stabilizer of 
$N$  \cite{DLM4}. It follows from \cite{DLM4} that $N$ is a projective module for $D_N.$ 
Since $D$ is cyclic, we see that $D_N$ is cyclic and $N$ is a module for $D_N.$ Moreover, $N$ is direct sum of inequivalent  irreducible $U\otimes U^c$-modules \cite{DRX}. The result follows immediately now. 
	\end{proof}
The above theorem is an analog of Lemma 2.1 of \cite{X2}.

\end{document}